\newcommand{\Q}{\ensuremath{\mathbb{Q}}}
\newcommand{\Z}{\ensuremath{\mathbb{Z}}}
\newcommand{\A}{\ensuremath{\mathbb{A}}}
\newcommand{\CG}{\ensuremath{\mathfrak{C}}}
\newcommand{\DG}{\ensuremath{\mathfrak{D}}}
\newcommand{\AG}{\ensuremath{\mathfrak{A}}}
\newcommand{\SG}{\ensuremath{\mathfrak{S}}}
\newcommand{\VG}{\ensuremath{\mathfrak{V}}}
\newcommand{\ka}{\ensuremath{\Bbbk}}
\newcommand{\kka}{\ensuremath{\overline{\Bbbk}}}
\newcommand{\XX}{\ensuremath{\overline{X}}}
\newcommand{\Pro}{\ensuremath{\mathbb{P}}}
\newcommand{\Aut}{\ensuremath{\operatorname{Aut}}}
\newcommand{\Gal}{\ensuremath{\operatorname{Gal}}}
\newcommand{\Pic}{\ensuremath{\operatorname{Pic}}}
\newcommand{\ord}{\ensuremath{\operatorname{ord}}}
\newtheorem{theorem}[equation]{Theorem}
\newtheorem{proposition}[equation]{Proposition}
\newtheorem{lemma}[equation]{Lemma}
\newtheorem{corollary}[equation]{Corollary}
\theoremstyle{definition}
\newtheorem{example}[equation]{Example}
\newtheorem{definition}[equation]{Definition}
\theoremstyle{remark}
\newtheorem{remark}[equation]{Remark}
\newtheorem*{notation}{Notation}
\title{Quotients of cubic surfaces}
\address{Institute for Information Transmission Problems, 19 Bolshoy Karetnyi side-str., Moscow 127994, Russia}
\address{Laboratory of Algebraic Geometry, National Research University Higher School of Economics, 7 Vavilova str., Moscow 117312, Russia}
\email{trepalin@mccme.ru}
\thanks{The article was prepared within the framework of a subsidy granted to the HSE by the Government of the Russian Federation for the implementation of the Global Competitiveness Program, and the grants RFFI 15-01-02164-a and N.SH.-2998.2014.1}
\author{Andrey Trepalin}
\begin{document}

\begin{abstract}
Let $\ka$ be any field of characteristic zero, $X$ be a cubic surface in $\Pro^3_{\ka}$ and $G$ be a group acting on $X$. We show that if $X(\ka) \ne \varnothing$ and $G$ is not trivial and not a group of order $3$ acting in a special way then the quotient surface $X / G$ is rational over~$\ka$. For the group $G$ of order $3$ we construct examples of both rational and nonrational quotients of both rational and nonrational $G$-minimal cubic surfaces over $\ka$.
\end{abstract}

\maketitle
\section{Introduction}

Let $\ka$ be any field of characteristic zero. If $\ka$ is algebraically closed then any quotient of a rational surface by an action of a finite group is rational by Castelnuovo criterion. For del Pezzo surfaces of degree~$4$ and higher the following theorem holds.

\begin{theorem}[{\cite[Theorem 1.1]{Tr13}}]
\label{ratquot4}
Let $\ka$ be a field of characteristic zero, $X$ be a del Pezzo surface over $\ka$ such that $X(\ka) \ne \varnothing$ and $G$ be a finite subgroup of automorphisms of $X$. If $K_X^2 \geqslant 5$ then the quotient variety $X / G$ is $\ka$-rational. If $K_X^2 = 4$ and the order of $G$ is not equal to $1$, $2$ or $4$ then $X / G$ is $\ka$-rational.
\end{theorem}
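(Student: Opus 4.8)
The plan is to reduce rationality of the quotient to the classical rationality criteria for minimal geometrically rational surfaces, applied to a smooth $\ka$-minimal model of $X/G$. In characteristic zero with $G$ finite, $X/G$ is a normal projective surface with at worst quotient singularities, and it is geometrically rational: over $\kka$ the surface $\overline X$ is rational, so $\overline X/\overline G$ is unirational and hence rational by Castelnuovo's criterion. Since $X$ has degree at least $3$ and a $\ka$-point it is unirational, so $X(\ka)$ is Zariski dense; a general such point has trivial stabiliser in $G$ and maps to a smooth $\ka$-point of $X/G$. Taking the minimal resolution $Y \to X/G$, this point lies in the smooth locus and is retained, so $Y$ is a smooth geometrically rational surface with $Y(\ka) \ne \varnothing$. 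Running the minimal model program over $\ka$ yields a birational morphism onto a minimal surface $S$ with $S(\ka) \ne \varnothing$, which by the classification of minimal geometrically rational surfaces due to Manin and Iskovskikh is either a del Pezzo surface with $\operatorname{rk}\Pic(S) = 1$ or a conic bundle $S \to B$ with $\operatorname{rk}\Pic(S) = 2$; in the latter case $B$ is a genus-zero curve with a $\ka$-point, hence $B \cong \Pro^1_{\ka}$.

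Next I would apply the rationality criteria of Iskovskikh and Manin: a minimal del Pezzo surface of degree at least $5$ with a $\ka$-point is $\ka$-rational, and a minimal conic bundle with a $\ka$-point and $K_S^2 \geq 5$ is $\ka$-rational, whereas the minimal models with $K_S^2 \leq 4$ are exactly the ones that can fail to be rational. Thus the whole theorem reduces to the single estimate $K_S^2 \geq 5$, equivalently to controlling the $\Gal(\kka/\ka)$-lattice $\Pic(\overline Y)$. For $K_X^2 \geq 5$ this also subsumes the case of trivial $G$, since a del Pezzo surface of degree at least $5$ with a $\ka$-point is already $\ka$-rational.

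To establish $K_S^2 \geq 5$ I would exploit the two commuting actions of $G$ and $\Gamma = \Gal(\kka/\ka)$ on $\Pic(\overline X)$. By the $G$-equivariant minimal model program the $\ka$-birational type of $X/G$ depends only on the $G$-birational class of $X$, so I may replace $X$ by a $G$-minimal model, which is either a $G$-del Pezzo surface ($\operatorname{rk}\Pic(X)^G = 1$) or a $G$-conic bundle over a base $B$ ($\operatorname{rk}\Pic(X)^G = 2$). In the conic bundle case the fibration descends to a conic bundle on $X/G$ over the quotient of $B$, and the degenerate fibres of the quotient are controlled by the locus in $X$ with nontrivial stabiliser; under the hypotheses of the theorem a direct count bounds their number by three, giving $K_S^2 \geq 5$. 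In the del Pezzo case I would invoke the Dolgachev–Iskovskikh classification of finite automorphism groups of del Pezzo surfaces of degree at least $4$: for each admissible pair $(X,G)$ one determines the fixed loci of the nontrivial elements of $G$, reads off the singularities of $X/G$ together with the Galois action on the exceptional curves of their resolution, and computes $K_S^2$ from $\Pic(\overline Y)^{\Gamma}$. For every pair with $K_X^2 \geq 5$, and for every pair with $K_X^2 = 4$ and $|G| \notin \{1,2,4\}$, this computation gives $K_S^2 \geq 5$.

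The main difficulty, and the source of the exclusions in the degree-four case, is exactly this last computation when $K_X^2 = 4$ and $G$ is small. An involution or a group of order four on a degree-four del Pezzo surface can have a one-dimensional fixed locus or many isolated fixed points, so that the quotient acquires a minimal model which is a conic bundle with four degenerate fibres or a minimal del Pezzo surface of degree four; in either case $K_S^2 = 4$ and the surface need not be rational. This is why $|G| \in \{1,2,4\}$ must be excluded, and I would confirm by explicit constructions that such quotients can indeed be non-rational. For all other orders the fixed loci are small enough, or the invariant conic bundle has at most three degenerate fibres, to force $K_S^2 \geq 5$. Carrying out the bookkeeping of fixed points and of the Galois action on the resolution in the $G$-minimal, degree-four situation is where I expect the real work of the proof to lie.
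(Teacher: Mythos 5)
First, a remark on scope: the paper does not actually prove this statement --- Theorem \ref{ratquot4} is imported from \cite{Tr13} and used as a black box --- so there is no in-paper proof to compare with. Judged on its own terms, your reduction is sound and is essentially the strategy of \cite{Tr13} (and of Section~3 of this paper for degree $3$): use unirationality to produce a $\ka$-point with trivial stabiliser, hence a smooth $\ka$-point surviving on the minimal resolution of $X/G$; run the MMP over $\ka$; and reduce the theorem to the single inequality $K_S^2 \geqslant 5$ for the resulting minimal model via the Iskovskikh criterion (Theorem \ref{ratcrit}).

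The genuine gap is that this inequality --- which is the entire content of the theorem --- is asserted rather than proved: your sentence ``for every pair with $K_X^2 \geqslant 5$, and for every pair with $K_X^2 = 4$ and $|G| \notin \{1,2,4\}$, this computation gives $K_S^2 \geqslant 5$'' is a restatement of the claim, not an argument. Two concrete points. In the conic bundle case, ``a direct count bounds their number by three'' is unjustified: the degenerate fibres of the quotient conic bundle are not simply orbits of the degenerate fibres of $X \rightarrow B$, because new reducible fibres appear over the branch points of $B \rightarrow B/G$ when the quotient singularities sitting in the invariant fibres are resolved, while invariant degenerate fibres whose two components are swapped become irreducible; balancing these two effects is precisely the hard local analysis (of the kind recorded in Table \ref{table1}). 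In the del Pezzo case, a literal enumeration of all pairs $(X,G)$ together with the Galois action on the resolution is possible in principle but enormous; the actual proof (mirrored in Section~3 here) instead reduces inductively along normal subgroups of prime order, as in Lemma \ref{S5subgroups}, using statements such as Lemma \ref{DP4i12}. Note also that your description of the excluded case is inaccurate: by Lemma \ref{DP4i12} a fixed-point-free involution on a quartic del Pezzo surface yields a conic bundle with $K^2 = 2$ (six degenerate fibres), not four. So the proposal is a correct plan whose decisive step is missing.
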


In this paper we find for which finite groups a quotient of cubic surface is $\ka$-rational and for which is not. The main result of this paper is the following.

\begin{theorem}
\label{DP3}
Let $\ka$ be a field of characteristic zero, $X$ be a del Pezzo surface over $\ka$ of degree $3$ such that $X(\ka) \ne \varnothing$ and $G$ be a subgroup of $\Aut_{\ka}(X)$. Suppose that $G$ is not trivial and $G$ is not a group of order $3$ having no curves of fixed points. Then $X / G$ is $\ka$-rational.
\end{theorem}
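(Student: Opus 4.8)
The plan is to study the quotient $S := X/G$ by resolving its singularities and running the minimal model program over $\ka$, keeping careful track of the Galois action on the Picard lattice throughout. First I would record the two features of $S$ that hold irrespective of the structure of $G$: the image of a $\ka$-point of $X$ is a $\ka$-point of $S$, and since a cubic surface with a rational point is unirational such points are dense, so a general one lands in the smooth locus and gives a smooth rational point; hence $S(\ka) \neq \varnothing$. Moreover $\bar{X}$ is rational, so $\bar{S} = \bar{X}/G$ is rational by the Castelnuovo criterion. Thus $S$ is a geometrically rational surface carrying a rational point, and it remains to show that a smooth projective minimal model $Z$ of a resolution of $S$ is $\ka$-rational. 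By Iskovskikh's classification of minimal geometrically rational surfaces it suffices to show that $Z$ is either $\Pro^2$, a del Pezzo surface with $K_Z^2 \geq 5$, or a conic bundle with $K_Z^2 \geq 5$; each of these is $\ka$-rational once it carries a rational point.

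The heart of the argument is the analysis of the singularities of $S$ and of the Picard lattice of its minimal resolution $\rho \colon \hat{S} \to S$. The singularities of $S$ are cyclic quotient singularities, governed by the stabilizers of points of $X$ together with their linearized actions on tangent planes. The decisive dichotomy is whether a nontrivial element acts near a fixed point as a pseudo-reflection, i.e.\ whether it fixes a curve pointwise: in that case the local quotient is smooth or du Val and contributes only $(-2)$-curves, whereas a cyclic group acting with isolated fixed points produces non-Gorenstein singularities, the model case being $\tfrac{1}{3}(1,1)$ for a group of order $3$, whose resolution inserts a $(-3)$-curve and distorts the canonical class. I would next compute the geometric Picard lattice of $\hat{S}$ together with its $\Gal(\kka/\ka)$-action: it is assembled from the $G$-invariant part of $\Pic(\bar{X})$, pulled back along the quotient map, together with the classes of the exceptional curves of $\rho$, the Galois action being inherited from that on $\Pic(\bar{X})$.

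To organize the case analysis I would exploit the lines on the cubic surface. Whenever $G$ preserves a line, or a Galois- and $G$-stable collection of pairwise disjoint lines, defined over $\ka$, contracting them yields a $G$-equivariant morphism over $\ka$ onto a del Pezzo surface of degree $\geq 4$ carrying a rational point; since blow-downs induce birational maps of quotients, $S$ is then birational to the corresponding higher-degree quotient, and one concludes by Theorem \ref{ratquot4} (the constraint $|G| \notin \{1,2,4\}$ being needed only when the target has degree exactly $4$). The configurations with no such contractible system — in particular the small groups of orders $2$ and $4$ and the groups of order $3$ — must be treated directly, by exhibiting on $\hat{S}$ either a rational pencil inducing a conic bundle with at most three degenerate fibres, or a sequence of contractions down to $\Pro^2$ or to a del Pezzo surface of degree $\geq 5$. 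The hypothesis on $G$ enters precisely here: a group of order $3$ with no fixed curve forces $\tfrac{1}{3}(1,1)$ singularities on $S$, whose resolving $(-3)$-curves can push the minimal model to a minimal del Pezzo surface of low degree, hence to a non-rational surface; this is the one situation the method cannot rescue, and it is exactly the excluded case.

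The main obstacle I anticipate is the final step of controlling the $\Gal(\kka/\ka)$-action on the Picard lattice of $\hat{S}$ finely enough to guarantee that the minimal model has $K^2 \geq 5$, or is a suitable conic bundle, rather than a minimal del Pezzo surface of degree $\leq 4$. This is essentially a lattice computation, to be carried out over the possible images of $G$ together with Galois in the Weyl group $W(E_6)$ acting on $\Pic(\bar{X})$, and the delicate point is to verify that in every non-excluded case there are enough Galois- and $G$-orbits of $(-1)$-curves on $\hat{S}$ to contract the surface to a rational minimal model while retaining a rational point.
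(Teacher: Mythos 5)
Your proposal is a plan rather than a proof: the step you yourself flag as "the main obstacle" --- controlling the Galois action on the Picard lattice of the resolved quotient finely enough to force a minimal model with $K^2 \geqslant 5$ --- is the entire content of the theorem, and you do not carry it out. The paper's organizing device, which your outline lacks, is to quotient in stages: using the classification of $\Aut(\XX)$ and Lemma \ref{S5subgroups} one always finds a normal subgroup $N \trianglelefteq G$ of a short list of types ($\CG_2$, $\CG_3$, $\VG_4$, $\CG_5$, $\AG_5$), proves that $X/N$ is $G/N$-birational to a del Pezzo surface of degree $\geqslant 5$ (or, in the type-5 case, to another cubic), and then invokes Theorem \ref{ratquot4} for the residual quotient by $G/N$. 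Your "contract a stable set of disjoint lines first" reduction only applies when $X$ is not $G$-minimal, and your fallback for the remaining configurations ("treat directly") is exactly the unperformed computation; in particular the order-$2$ cases, which Theorem \ref{ratquot4} cannot absorb in degree $4$, require the explicit Hurwitz-formula and fixed-point analysis of Lemma \ref{DP3cyclic}.

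More seriously, the one place where you commit to a concrete mechanism is wrong. You assert that an order-$3$ group with no fixed curve forces $\tfrac{1}{3}(1,1)$ singularities, and that these non-Gorenstein points are what obstruct rationality. In fact the excluded action (type $5$ of Table \ref{table2}, tangent action $\operatorname{diag}(\omega,\omega^2)$ at its three fixed points) produces three Du Val $A_2$ singularities, i.e.\ $\tfrac{1}{3}(1,2)$; the resolution is crepant and, after contracting two $(-1)$-curves, the quotient is again a smooth cubic surface (Lemma \ref{DP3C35}) --- the obstruction is that this cubic may be minimal over $\ka$, not that the singularities are bad. Conversely, the $\tfrac{1}{3}(1,1)$ points with their $(-3)$-curves arise from the type-$4$ action (isolated fixed points with scalar tangent action $\operatorname{diag}(\omega,\omega)$), and that quotient is always rational: the nine lines $C_{ij}$ descend to nine disjoint $(-1)$-curves whose contraction gives $K_Y^2 = 8$. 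So your proposed Gorenstein/non-Gorenstein dichotomy points at the wrong case and would not isolate the genuine exception; the correct invariant distinguishing types $4$ and $5$ is the existence of $G$-invariant $(-1)$-curves (equivalently $\rho(\XX)^G$), not the singularity type of the quotient.
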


Note that if $G$ is trivial and $X$ is minimal then $X$ is not $\ka$-rational (see \cite[Theorem~V.1.1]{Man74}). This gives us an example of a del Pezzo surface of degree $3$ such that its quotient by the trivial group is not $\ka$-rational. For a group $G$ of order $3$ acting without curves of fixed points on $X$ we construct examples of quotients of $G$-minimal cubic surface~$X$ such that $X$ is $\ka$-rational and $X / G$ is $\ka$-rational, $X$ is $\ka$-rational and~$X / G$ is not \mbox{$\ka$-rational}, $X$~is not $\ka$-rational and~$X / G$ is $\ka$-rational, and $X$ is not $\ka$-rational and $X / G$ is not $\ka$-rational.

To prove Theorem \ref{DP3} we consider possibilities for groups $G$ acting on $X$. Our main method is to find a normal subgroup $N$ in $G$ such that the quotient $X / N$ is \mbox{$G / N$-birationally} equivalent to a del Pezzo surface of degree $5$ or more. Therefore \mbox{$\ka$-rationality} of~$X / G$ is equivalent to $\ka$-rationality of the quotient of the obtained del Pezzo surface by the group $G / N$ and we can use Theorem \ref{ratquot4}.

The plan of this paper is as follows. In Section $2$ we recall some facts about minimal rational surfaces, groups, singularities and quotiens. In Section $3$ we consider quotients of cubic surfaces by nontrivial groups of automorphisms and show that all them except a case are always $\ka$-rational. In Section $4$ for non-$\ka$-rational quotients of a $\ka$-rational cubic surface $X$ by a group of order $3$ we find all possibilities of the image of the Galois group $\Gal\left(\kka / \ka\right)$ in the Weyl group $W(E_6)$ acting on the Picard group of $X$. In Section $5$ we find an explicit geometric interpretation of the obtained actions of the Galois group in terms of equations of $X$. In Section $6$ for a group $G$ of order $3$ acting on a $G$-minimal cubic surface $X$ without curves of fixed points we construct examples of $\ka$-rational and non-$\ka$-rational quotients.

The author is grateful to his adviser Yu.\,G.\,Prokhorov and to C.\,A.\,Shramov for useful discussions.

\begin{notation}

Throughout this paper $\ka$ is any field of characteristic zero, $\kka$ is its algebraic closure. For a surface $X$ we denote $X \otimes \kka$ by $\XX$. For a surface $X$ we denote the Picard group (resp. $G$-invariant Picard group) by $\Pic(X)$ (resp. $\Pic(X)^G$). The number \mbox{$\rho(X) = \operatorname{rk} \Pic(X)$} (resp. \mbox{$\rho(X)^G = \operatorname{rk} \Pic(X)^G$}) is the Picard number (resp. $G$-invariant Picard number) of $X$. If two surfaces $X$ and $Y$ are $\ka$-birationally equivalent then we write~$X \approx Y$. If two divisors $A$ and $B$ are linearly equivalent then we write $A \sim B$.

\end{notation}

\section{Preliminaries}

\subsection{$G$-minimal rational surfaces}

% Добавить топики Долгачёва?

In this subsection we review main notions and results of $G$-equivariant minimal model program following the papers \cite{Man67}, \cite{Isk79}, \cite{DI1}. Throughout this subsection $G$ is a finite group.

\begin{definition}
\label{rationality}
A {\it rational variety} $X$ is a variety over $\ka$ such that $\XX=X \otimes \kka$ is birationally equivalent to $\Pro^n_{\kka}$.

A {\it $\ka$-rational variety} $X$ is a variety over $\ka$ such that $X$ is birationally equivalent to $\Pro^n_{\ka}$.

A variety $X$ over $\ka$ is a {\it $\ka$-unirational variety} if there exists a $\ka$-rational variety $Y$ and a dominant rational map $\varphi: Y \dashrightarrow X$.
\end{definition}

\begin{definition}
\label{minimality}
A {\it $G$-surface} is a pair $(X, G)$ where $X$ is a projective surface over $\ka$ and~$G$ is a finite subgroup of $\Aut_{\ka}(X)$. A morphism of $G$-surfaces $f: X \rightarrow X'$ is called a \textit{$G$-morphism} if for each $g \in G$ one has $fg = gf$.

A smooth $G$-surface $(X, G)$ is called {\it $G$-minimal} if any birational morphism of smooth $G$-surfaces $(X, G) \rightarrow (X',G)$ is an isomorphism.

Let $(X, G)$ be a smooth $G$-surface. A $G$-minimal surface $(Y, G)$ is called a {\it minimal model} of $(X, G)$ or {\it $G$-minimal model} of $X$ if there exists a birational \mbox{$G$-morphism $X \rightarrow Y$}.
\end{definition}

The following theorem is a classical result about $G$-equivariant minimal model program.

\begin{theorem}
\label{GMMP}
Any $G$-morphism $f:X \rightarrow Y$ can be factorized in the following way:
$$
X= X_0 \xrightarrow{f_0} X_1 \xrightarrow{f_1} \ldots \xrightarrow{f_{n-2}} X_{n-1} \xrightarrow{f_{n-1}} X_n = Y,
$$
where each $f_i$ is a contraction of a set $\Sigma_i$ of disjoint $(-1)$-curves on~$X_i$, such that $\Sigma_i$ is defined over $\ka$ and $G$-invariant. In particular,
$$
K_Y^2 - K_X^2 \geqslant \rho(X)^G - \rho(Y)^G.
$$
\end{theorem}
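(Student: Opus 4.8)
The plan is to induct on the number of $\kka$-irreducible curves contracted by the base change $\overline{f}\colon \XX \to \overline{Y}$. If $f$ is an isomorphism there is nothing to prove, so assume its exceptional locus $\operatorname{Exc}(\overline{f})$ is nonempty. Since $\overline{f}$ is a birational morphism of smooth projective surfaces over $\kka$, it is a composition of blow-downs of $(-1)$-curves, so $\operatorname{Exc}(\overline{f})$ contains at least one $(-1)$-curve. I would let $\Sigma_0$ denote the set of \emph{all} $(-1)$-curves contained in $\operatorname{Exc}(\overline{f})$ and show that contracting $\Sigma_0$ produces the first step $f_0\colon X \to X_1$.

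The first key point is that the curves in $\Sigma_0$ are pairwise disjoint. Curves lying in fibers of $\overline{f}$ over distinct points of $\overline{Y}$ are automatically disjoint, so it suffices to treat two $(-1)$-curves $C, C'$ in a single fiber. If $C \cdot C' > 0$, then contracting $C$ would turn $C'$ into an irreducible curve of self-intersection $-1 + (C \cdot C')^2 \geqslant 0$ that is still contracted by the induced morphism to $\overline{Y}$; this contradicts the negative definiteness of the intersection form on curves contracted by a birational morphism of smooth surfaces. Hence $\Sigma_0$ consists of disjoint $(-1)$-curves. Because $G$ acts on $X$ commuting with $f$ and $\Gal(\kka / \ka)$ acts on $\XX$ commuting with $\overline{f}$, both groups permute the components of $\operatorname{Exc}(\overline{f})$ preserving self-intersection numbers; therefore $\Sigma_0$ is $G$-invariant and $\Gal(\kka / \ka)$-invariant.

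Next I would contract $\Sigma_0$. By Castelnuovo's criterion the disjoint $(-1)$-curves in $\Sigma_0$ can be blown down over $\kka$; since $\Sigma_0$ is $\Gal(\kka / \ka)$-invariant the contraction descends to a birational morphism $f_0\colon X \to X_1$ of smooth projective surfaces over $\ka$, and since $\Sigma_0$ is $G$-invariant the action of $G$ descends to $X_1$ so that $f_0$ is a $G$-morphism. Then $\overline{f}$ factors through the contraction, giving an induced $G$-morphism $X_1 \to Y$ whose exceptional locus has strictly fewer $\kka$-irreducible components. By induction this yields the asserted factorization $X = X_0 \to X_1 \to \dots \to X_n = Y$ with each $f_i$ the contraction of a $\ka$-defined, $G$-invariant set $\Sigma_i$ of disjoint $(-1)$-curves.

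Finally, the inequality follows by bookkeeping along the factorization. Contracting the $|\Sigma_i|$ disjoint $(-1)$-curves raises $K^2$ by exactly $|\Sigma_i|$, so $K_Y^2 - K_X^2 = \sum_i |\Sigma_i|$. On geometric Picard groups one has $\Pic(\XX_i) = \overline{f_i}^{\,*}\Pic(\XX_{i+1}) \oplus \bigl(\bigoplus_{C \in \Sigma_i}\Z[C]\bigr)$; taking invariants under the finite group generated by $G$ and $\Gal(\kka / \ka)$, the exceptional summand is a permutation module whose rank of invariants equals the number of orbits on $\Sigma_i$. Since $\rho(X_i)^G$ is the rank of these invariants, we obtain $\rho(X_i)^G - \rho(X_{i+1})^G = \#\{\text{orbits on }\Sigma_i\} \leqslant |\Sigma_i|$, and summing over $i$ gives $\rho(X)^G - \rho(Y)^G \leqslant \sum_i |\Sigma_i| = K_Y^2 - K_X^2$. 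I expect the main obstacle to be the descent step: one must verify that the geometric contraction of $\Sigma_0$ is genuinely a morphism of surfaces defined over $\ka$ carrying the $G$-action, which is precisely where $G$-invariance, $\Gal(\kka/\ka)$-invariance, and the disjointness established above are all needed at once.
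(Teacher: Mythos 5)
The paper states this as a classical result of the $G$-equivariant minimal model program and gives no proof of its own, so there is nothing to compare against; your argument is the standard one and is correct. In particular, the disjointness of the $(-1)$-curves in $\operatorname{Exc}(\overline{f})$ via negative definiteness, the Galois- and $G$-equivariant descent of the contraction, and the orbit-counting inequality $\rho(X_i)^G - \rho(X_{i+1})^G = \#\{\text{orbits on }\Sigma_i\} \leqslant |\Sigma_i| = K_{X_{i+1}}^2 - K_{X_i}^2$ are exactly the steps this theorem rests on.
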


The classification of $G$-minimal rational surfaces is well-known due to V.\,Iskovskikh and Yu.\,Manin (see \cite{Isk79} and \cite{Man67}). We introduce some important notions before surveying it.

\begin{definition}
\label{Cbundledef}
A smooth rational $G$-surface $(X, G)$ admits a {\it conic bundle} structure if there exists a $G$-morphism $\varphi: X \rightarrow B$ such that any scheme fibre is isomorphic to a reduced conic in~$\Pro^2_{\ka}$ and $B$ is a smooth curve.
\end{definition}

\begin{definition}
\label{DPdef}
A {\it del Pezzo surface} is a smooth projective surface~$X$ such that the anticanonical divisor $-K_X$ is ample. A {\it singular del Pezzo surface} is a normal projective surface $X$ such that the anticanonical divisor $-K_X$ is ample and all singularities of $X$ are Du Val singularities. A {\it weak del Pezzo surface} is a smooth projective surface $X$ such that the anticanonical divisor $-K_X$ is nef and big.

The number $d = K_X^2$ is called the {\it degree} of a (singular) del Pezzo surface $X$.
\end{definition}

A del Pezzo surface $X$ of degree $3$ is isomorphic to a smooth cubic surface in $\Pro^3_{\ka}$.

The following theorem classifies $G$-minimal rational surfaces.

\begin{theorem}[{\cite[Theorem 1]{Isk79}}]
\label{Minclass}
Let $X$ be a $G$-minimal rational $G$-surface. Then either $X$ admits a $G$-equivariant conic bundle structure with $\Pic(X)^{G} \cong \Z^2$, or $X$ is a del Pezzo surface with $\Pic(X)^{G} \cong \Z$.
\end{theorem}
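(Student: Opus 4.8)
The plan is to run the $G$-equivariant minimal model program of Theorem~\ref{GMMP}. Since $(X,G)$ is already $G$-minimal, $X$ coincides with its own $G$-minimal model, so no nontrivial contraction of a $\Gal(\kka/\ka)$-invariant and $G$-invariant set of disjoint $(-1)$-curves exists. The first step is to pass to the invariant lattice $\Pic(\XX)^{G}$ (more precisely, to the sublattice fixed by $G$ together with the Galois action), equipped with the intersection form and the distinguished class $K_X$. The statement then amounts to describing the possible Mori fibre space structures that a terminal object of the surface MMP can carry, so the whole argument is a case analysis on the output of the program.

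Next I would invoke the equivariant cone theorem to produce a $K_X$-negative extremal face of the invariant Mori cone $\overline{NE}(X)^{G}$ and contract it $G$-equivariantly, obtaining a $G$-morphism $\varphi\colon X \to B$ with $-K_X$ relatively ample and relative invariant Picard number one. A divisorial contraction would contract a $G$-invariant orbit of disjoint $(-1)$-curves, which is excluded by $G$-minimality, so the contraction must be of \emph{fibre type} and $\dim B \in \{0,1\}$. If $\dim B = 0$ then $\rho(X)^G = 1$ and $-K_X$ is ample, hence $X$ is a del Pezzo surface with $\Pic(X)^G \cong \Z$. If $\dim B = 1$ then $B$ is a smooth curve whose base change to $\kka$ is $\Pro^1_{\kka}$ (because $X$, and hence $B$, is rational), so $\rho(B)^G = 1$ and $\rho(X)^G = \rho(B)^G + 1 = 2$, giving $\Pic(X)^G \cong \Z^2$.

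It then remains to identify the fibre-type case with a conic bundle in the sense of Definition~\ref{Cbundledef}. For this I would apply adjunction to a general fibre $F$: from $K_X \cdot F + F^2 = 2p_a(F) - 2$ and $F^2 = 0$ one gets $-K_X \cdot F = 2$, so the relative anticanonical map realizes each scheme fibre as a reduced conic in $\Pro^2_{\ka}$. The main obstacle, and the technical heart of the argument, is the equivariant cone and contraction theorem over a non-closed field: one must run the program simultaneously with respect to $G$ and to $\Gal(\kka/\ka)$, interpret every $(-1)$-curve as a member of a disjoint Galois- and $G$-stable orbit, and check that contracting such an orbit again produces a smooth surface. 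Once this equivariant contraction machinery is established, the dichotomy above follows at once; this is exactly the content worked out in \cite{Isk79} and \cite{DI1}.
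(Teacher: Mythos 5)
The paper offers no proof of Theorem \ref{Minclass}: it is quoted as a black box from Iskovskikh's classification (\cite[Theorem 1]{Isk79}), so there is no internal argument to compare yours against. Your sketch is the standard \emph{modern} proof via the two-dimensional MMP run equivariantly for $G$ together with $\Gal(\kka/\ka)$: contract a $K_X$-negative extremal ray of the invariant cone; $G$-minimality excludes the birational (divisorial) case, leaving either $\dim B=0$, whence $\rho(X)^G=1$ and $-K_X$ is ample by Kleiman's criterion because $K_X$ is never nef on a rational surface, or $\dim B=1$, whence $\rho(X)^G=2$ and adjunction on a general fibre gives $-K_X\cdot F=2$. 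This outline is correct and matches the treatment in \cite{DI1}, but note it is genuinely different from Iskovskikh's original 1979 argument, which predates Mori theory and proceeds instead by termination of adjunction and explicit study of the linear systems $|-K_X|$ and $|-K_X-F|$; the MMP route is shorter but pushes all the weight onto the equivariant cone and contraction theorems over a non-closed field, which you correctly flag as the technical heart and then defer. One further step you pass over too quickly: identifying the fibre-type contraction with a conic bundle in the sense of Definition \ref{Cbundledef} requires showing that \emph{every} scheme fibre is a reduced conic --- no multiple fibres, and each reducible fibre consists of exactly two $(-1)$-curves meeting transversally at one point. This follows from $\varphi$-ampleness of $-K_X$ together with $p_a(F)=0$ applied to each fibre component, but it is a genuine (if short) argument, not an immediate consequence of $-K_X\cdot F=2$ for the general fibre. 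Since the paper itself cites the whole statement without proof, your level of detail is acceptable, but a self-contained write-up would need these two ingredients supplied.
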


\begin{theorem}[{cf. \cite[Theorem 4]{Isk79}}]
\label{MinCB}
Let $X$ admit a $G$-equivariant conic bundle structure. Suppose that $K_X^2 = 3, 5, 6$ or $X$ is a blowup of $\Pro^2_{\ka}$ at a point. Then $X$ is not $G$-minimal.
\end{theorem}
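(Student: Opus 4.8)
The plan is to prove non-$G$-minimality directly, by producing in each case a non-empty set of pairwise disjoint $(-1)$-curves on $\XX$ that is simultaneously invariant under $G$ and under $\Gal(\kka/\ka)$. Such a set descends to a set of $(-1)$-curves defined over $\ka$, its contraction is a birational $G$-morphism which is not an isomorphism, and this contradicts $G$-minimality in the sense of Definition~\ref{minimality} (cf. Theorem~\ref{GMMP}). Let $\varphi\colon X\to B$ be the conic bundle and $f$ the class of a fibre. The key observation is that both $f$ and $K_X$ are preserved by $G$ and by $\Gal(\kka/\ka)$; hence any class given by a fixed expression in $f$ and $K_X$, and any set of $(-1)$-curves singled out by numerical conditions against $f$ and $K_X$, is automatically invariant under both. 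Over $\kka$ the surface $\XX$ is $\mathbb{F}_n$ blown up at $r=8-K_X^2$ points lying on distinct fibres (one per degenerate fibre); the two components of a degenerate fibre are the $(-1)$-curves $C$ with $C\cdot f=0$, a section has $C\cdot f=1$, and a bisection has $C\cdot f=2$.

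First I would dispose of the cases of large degree. If $X$ is the blowup of $\Pro^2_{\ka}$ at a point, then $\XX\cong\mathbb{F}_1$ carries a unique $(-1)$-curve, namely the negative section of $\varphi$; being unique it is invariant, and its contraction is the desired $G$-morphism. For $K_X^2=6$ (so $r=2$) and $K_X^2=5$ (so $r=3$) I would list the $(-1)$-curves that are sections of $\varphi$, i.e. the $(-1)$-curves $C$ with $C\cdot f=1$. A direct computation in $\Pic(\XX)$ shows that there are exactly two of them when $K_X^2=6$ and exactly four when $K_X^2=5$, and that in each case they are pairwise disjoint. Since the actions of $G$ and of $\Gal(\kka/\ka)$ preserve $f$ and the intersection form, they permute this finite set, which is therefore invariant; contracting it gives the required contradiction.

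The main case, and the one I expect to be the crux, is $K_X^2=3$ (so $r=5$), where the $(-1)$-sections are numerous and not disjoint. Here I would instead consider the class $B=-K_X-f$. A short computation gives $B\cdot f=2$, $B^2=-1$ and $B\cdot K_X=-1$, so $B$ is the class of a bisection of arithmetic genus $0$. Riemann--Roch gives $\chi(B)=1$; since $f$ is nef and $(2K_X+f)\cdot f<0$, the class $K_X-B=2K_X+f$ is not effective, so $h^2(B)=0$ and $B$ is effective. Thus $B$ is represented by a (horizontal) $(-1)$-curve whose class, being the canonical expression $-K_X-f$, is invariant under $G$ and $\Gal(\kka/\ka)$; its contraction again contradicts $G$-minimality. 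It is precisely here that degree $3$ is singled out: one has $(-K_X-f)^2=K_X^2-4$, which equals $-1$ only for $K_X^2=3$, so the analogous class is not a $(-1)$-curve for $K_X^2=4$ (where it is a second fibre class) or $K_X^2=7$, in agreement with those degrees being excluded from the statement.

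The step I expect to require the most care is showing that the numerically distinguished classes are represented by genuine irreducible $(-1)$-curves, rather than reducible or non-reduced divisors, in the instances where $\XX$ is only a weak del Pezzo surface (or less); this is also where one must check that the selected curve, not merely its class, is invariant. I would handle this by decomposing the effective divisor along the conic bundle: any vertical component is a component of a degenerate fibre, whose contribution is controlled by the $r$ degenerate fibres, so that after removing the vertical part the remaining horizontal curve is forced to lie in the distinguished class and to be an irreducible $(-1)$-curve. Being the unique such curve in an invariant class, it is itself invariant, so its contraction is a legitimate birational $G$-morphism and $G$-minimality fails in every listed case.
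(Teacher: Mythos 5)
First, a remark on the comparison itself: the paper gives no proof of this statement — it is quoted from Iskovskikh's classification of minimal rational surfaces (\cite[Theorem 4]{Isk79}) — so there is no internal argument to measure yours against; I can only assess your proposal on its own terms.

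There is a genuine gap, and it sits exactly where you predicted but is not repaired by your proposed fix. Your argument presupposes that the numerically distinguished classes (the $(-1)$-section classes for $K_X^2=5,6$, and the bisection class $-K_X-f$ for $K_X^2=3$) are represented by irreducible curves. This fails whenever the conic bundle carries a section $s_0$ with $s_0^2\leqslant -2$, i.e.\ whenever $\XX$ is obtained from $\F_n$, $n\geqslant 2$, by blowing up $8-K_X^2$ points on distinct fibres. Concretely, take $\F_3$ blown up at two points off the negative section: this is a conic bundle with $K_X^2=6$, its only $(-1)$-curves are the four components of the two degenerate fibres, and your two candidate section classes $s_0+af-\sum b_iE_i$ satisfy $C\cdot s_0<0$, so every effective representative contains $s_0$ and there are \emph{no} irreducible $(-1)$-sections at all; your invariant set is empty and contracting it proves nothing. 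Likewise for $K_X^2=3$ on $\F_5$ blown up at five points, the class $-K_X-f=2s_0+6f-\sum E_i$ meets $s_0$ in degree $-4$, so it is effective but every member of the linear system contains $s_0$ as a horizontal component of self-intersection $-5$; stripping off \emph{vertical} components, as you propose, cannot remove this obstruction because it is horizontal. The standard repair, and essentially Iskovskikh's route, is a dichotomy on the minimal self-intersection of a section: if some section has self-intersection $\leqslant -2$, one checks that a section of minimal self-intersection is unique, hence $G\times\Gal(\kka/\ka)$-invariant, and it separates the two components of each degenerate fibre (one meets it, one does not), so the $8-K_X^2$ components disjoint from it form an invariant pairwise-disjoint set of $(-1)$-curves; if instead every section has self-intersection $\geqslant -1$, your numerical arguments (which are correct in that regime, including the pleasant observation that $(-K_X-f)^2=K_X^2-4$ singles out degree $3$) go through. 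Without this case division the proof is incomplete for all three degrees $3$, $5$, $6$.
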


The following theorem is an important criterion of $\ka$-rationality over an arbitrary perfect field $\ka$.

\begin{theorem}[{\cite[Chapter 4]{Isk96}}]
\label{ratcrit}
A minimal rational surface $X$ over a perfect field $\ka$ is $\ka$-rational if and only if the following two conditions are satisfied:

(i) $X(\ka) \ne \varnothing$;

(ii) $K_X^2 \geqslant 5$.
\end{theorem}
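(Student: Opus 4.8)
The plan is to prove both implications by reducing, via the Iskovskikh--Manin classification (Theorem~\ref{Minclass} applied with $G$ trivial), to a short list of $\ka$-minimal rational surfaces. The necessity of~(i) is immediate: a $\ka$-birational map $\Pro^2_{\ka} \dashrightarrow X$ is a $\ka$-morphism on some dense open $U \subseteq \Pro^2_{\ka}$, and over an infinite field $U(\ka)$ is Zariski dense, hence nonempty, so its image produces a $\ka$-point of $X$ (over a finite field every smooth geometrically rational surface has a $\ka$-point, so (i) holds automatically). By Theorem~\ref{Minclass} the remaining analysis splits into two cases: either $X$ carries a conic bundle structure with $\Pic(X) \cong \Z^2$, or $X$ is a del Pezzo surface with $\Pic(X) \cong \Z$.

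For the sufficiency I would argue degree by degree. Suppose $X(\ka) \ne \varnothing$ and $K_X^2 \geqslant 5$. If $X$ is a del Pezzo surface with $\Pic(X) \cong \Z$, then $K_X^2 \in \{5,6,8,9\}$ (degree $7$ is never $\ka$-minimal, as the strict transform of the line through the two blown-up points is a $\ka$-rational $(-1)$-curve). In each of these degrees a del Pezzo surface possessing a $\ka$-point is classically $\ka$-rational: in degree $9$ a Severi--Brauer surface with a point is $\Pro^2_{\ka}$; in degree $8$ one projects a quadric from a $\ka$-point; in degree $6$ a sufficiently general $\ka$-point yields an explicit birational map to $\Pro^2_{\ka}$; and a del Pezzo surface of degree $5$ always carries a $\ka$-point and is always $\ka$-rational. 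If instead $X$ is a conic bundle with $\Pic(X) \cong \Z^2$, then by Theorem~\ref{MinCB} the values $K_X^2 = 3,5,6$ and the blow-up of $\Pro^2_{\ka}$ at a point are excluded, and a direct inspection of the one remaining small case (a single degenerate fibre, $K_X^2 = 7$, which admits a $\ka$-rational $(-1)$-section that can be contracted) shows that a $\ka$-minimal conic bundle with $K_X^2 \geqslant 5$ must be a Hirzebruch surface $\F_n$ with $n \ne 1$; these are $\Pro^1$-bundles over $\Pro^1_{\ka}$ and hence $\ka$-rational.

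The hard part will be the necessity of~(ii): that a $\ka$-minimal rational surface with $K_X^2 \leqslant 4$ and a $\ka$-point is \emph{not} $\ka$-rational. For this I would follow Iskovskikh and develop the theory of elementary birational links between $\ka$-minimal rational surfaces, a two-dimensional precursor of the Sarkisov program: one shows that any $\ka$-birational map between $\ka$-minimal rational surfaces factors as a composition of a short, explicitly classified list of elementary links (fibrewise elementary transformations of conic bundles together with a few links passing through del Pezzo surfaces). The key is then to attach to each $\ka$-minimal model a discrete invariant whose behaviour along these links can be controlled --- for conic bundles the natural candidate is the number $8 - K_X^2$ of degenerate geometric fibres --- and to prove that this invariant obstructs any link chain from the $K_X^2 \leqslant 4$ surfaces to $\Pro^2_{\ka}$ (which corresponds to $K^2 = 9$, with no degenerate fibres). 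In this way del Pezzo surfaces of degree at most $4$ with $\Pic(X) \cong \Z$ and conic bundles with at least four degenerate fibres are seen to be $\ka$-birationally rigid within their classes and never $\ka$-rational. I expect the classification of the elementary links and the verification that the chosen invariant is genuinely preserved (or monotone) to be by far the most technical and delicate step of the proof.
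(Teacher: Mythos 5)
You should first note that the paper itself contains no proof of Theorem \ref{ratcrit}: it is imported verbatim from \cite[Chapter 4]{Isk96} and used as a black box, so your attempt can only be judged against the argument in the literature. Its architecture you have reproduced faithfully: reduce via the classification of minimal rational surfaces (Theorem \ref{Minclass} with trivial $G$) to del Pezzo surfaces with $\Pic(X)\cong\Z$ and conic bundles with $\Pic(X)\cong\Z^2$; prove $\ka$-rationality degree by degree when $K_X^2\geqslant 5$ and $X(\ka)\ne\varnothing$; and prove non-rationality for $K_X^2\leqslant 4$ by factoring birational maps into elementary links. The necessity of (i) and the sufficiency direction are essentially complete modulo classical facts (Ch\^atelet in degree $9$, projection from a point in degree $8$, the standard constructions in degrees $6$ and $5$, non-minimality in degree $7$, and Theorem \ref{MinCB} for the excluded conic-bundle degrees). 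One detail you gloss over: for a minimal conic bundle with $K_X^2=8$, the generic fibre is a conic over $\ka(\Pro^1)$ which does not split merely because some closed fibre has a $\ka$-point; one needs an unramifiedness argument in the Brauer group (or the classification of degree $8$ del Pezzo surfaces) to conclude that $X$ is a Hirzebruch surface. The conclusion is correct, but it is not automatic.

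The genuine gap is the necessity of (ii). What you have written there is a plan, not a proof: the classification of elementary links between $\ka$-minimal rational surfaces, the untwisting procedure, and the verification that the relevant invariant (the number $8-K_X^2$ of degenerate fibres, together with $K_X^2$ on the del Pezzo side) cannot cross the threshold along any chain of links constitute the entire technical content of several chapters of \cite{Isk96}, resting on the Noether--Fano inequalities for linear systems with prescribed base conditions. None of this is carried out, or even reduced to a finite checkable list, in your proposal; as it stands the ``only if'' half of the theorem is asserted rather than proved. Since this is precisely the half that makes the criterion nontrivial (it is what guarantees, for instance, that the minimal degree $4$ del Pezzo surface $Z$ produced in Lemma \ref{DP3nonrat} is genuinely not $\ka$-rational), the proposal cannot be accepted as a proof, though it is a correct and well-organized outline of one.
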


\begin{corollary}
\label{piccrit}
Let $X$ be a rational $G$-surface such that $X(\ka) \ne \varnothing$ and $\rho(X)^G + K_X^2 \geqslant 7$. Then there exists a $G$-minimal model $Y$ of $X$ such that $K_Y^2 \geqslant 6$. In particular, $X$ is $\ka$-rational.
\end{corollary}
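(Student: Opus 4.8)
The plan is to pass to a $G$-minimal model $Y$ of $X$, bound $K_Y^2$ from below, and then deduce $\ka$-rationality from Theorem \ref{ratcrit}. First I would produce $Y$ by the $G$-equivariant minimal model program: repeatedly contract $G$-invariant sets of disjoint $(-1)$-curves defined over $\ka$ until none remains. This gives a birational $G$-morphism $X \to Y$ with $Y$ being $G$-minimal, and Theorem \ref{GMMP} applied to it yields
$$
K_Y^2 \geqslant K_X^2 + \rho(X)^G - \rho(Y)^G.
$$

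Next I would split into the two cases of Theorem \ref{Minclass}. If $Y$ is a del Pezzo surface then $\rho(Y)^G = 1$, and combined with the hypothesis $\rho(X)^G + K_X^2 \geqslant 7$ the displayed inequality immediately gives $K_Y^2 \geqslant 6$. If instead $Y$ carries a $G$-equivariant conic bundle structure then $\rho(Y)^G = 2$, and the same inequality only yields $K_Y^2 \geqslant 5$, one unit short of what is needed. Here I would invoke Theorem \ref{MinCB}: since $Y$ is $G$-minimal and admits a conic bundle structure, its degree cannot be $3$, $5$, or $6$, nor can $Y$ be the blowup of $\Pro^2_{\ka}$ at a point. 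As $K_Y^2$ is an integer with $K_Y^2 \geqslant 5$ and $K_Y^2 \notin \{5, 6\}$, it follows that $K_Y^2 \geqslant 7$. In both cases $K_Y^2 \geqslant 6$, which is the first assertion.

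For the final claim I would use that $\ka$-rationality is a birational invariant, so since $X \to Y$ is birational it suffices to prove that $Y$ is $\ka$-rational. Forgetting the group action, I would take a minimal model $Z$ of the underlying surface $Y$; contracting $(-1)$-curves can only raise the canonical self-intersection, so $K_Z^2 \geqslant K_Y^2 \geqslant 6$, and $Z(\ka) \neq \varnothing$ because a $\ka$-point of $X$ pushes forward to $\ka$-points of $Y$ and then of $Z$. Then $Z$ is a minimal rational surface meeting both hypotheses of Theorem \ref{ratcrit}, hence $\ka$-rational, and therefore so are $Y$ and $X$. The only delicate point in the whole argument is the conic bundle case: the crude inequality from Theorem \ref{GMMP} falls exactly one unit short, and it is precisely the exclusion of the values $K^2 = 5, 6$ in Theorem \ref{MinCB} that closes the gap, so the main care goes into confirming that a $G$-minimal conic bundle is genuinely covered by that theorem.
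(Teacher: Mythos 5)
Your proof is correct and follows essentially the same route as the paper: Theorem \ref{Minclass} and the inequality from Theorem \ref{GMMP} give $K_Y^2 \geqslant 7 - \rho(Y)^G$, the conic bundle case with $K_Y^2 = 5$ is ruled out by Theorem \ref{MinCB}, and $\ka$-rationality follows from Theorem \ref{ratcrit}. Your extra step of passing to a non-equivariant minimal model $Z$ before invoking Theorem \ref{ratcrit} is a small refinement the paper glosses over, but it does not change the argument.
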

\begin{proof}
By Theorem \ref{Minclass} there exists a birational $G$-morphism \mbox{$f: X \rightarrow Z$} such \mbox{that $\rho(Z)^G \leqslant 2$}. By Theorem \ref{GMMP} one has
$$
K_Z^2 \geqslant K_X^2 + \rho(X)^G - \rho(Z)^G \geqslant 7 - \rho(Z)^G.
$$

If $\rho(Z)^G = 1$ then $K_Z^2 \geqslant 6$. \mbox{If $\rho(Z)^G = 2$} and $K_Z^2 = 5$ then $Z$ is not $G$-minimal by Theorem \ref{MinCB}. Therefore there exists a $G$-minimal model $Y$ of $Z$ such that $K_Y^2 \geqslant 6$.

The set $X(\ka)$ is not empty. Thus $Y(\ka) \ne \varnothing$ and $X \approx Y$ is $\ka$-rational by Theorem~\ref{ratcrit}.
\end{proof}

In this paper we use the notation of the following remark.

\begin{remark}
\label{DP_1curves}
Let $X$ be a cubic surface in $\Pro^3_{\ka}$. Then $\XX$ can be realized as a blowup \mbox{$f: \XX \rightarrow \Pro^2_{\kka}$} at $6$ points $p_1$, $\ldots$, $p_6$ in general position. Put $E_i = f^{-1}(p_i)$ and $L = f^*(l)$, where~$l$ is the class of a line on $\Pro^2_{\kka}$. One has
$$
-K_{\XX} \sim 3L - \sum \limits_{i=1}^6 E_i.
$$
The $(-1)$-curves on $\XX$ are $E_i$, the proper transforms \mbox{$L_{ij} \sim L - E_i - E_j$} of the lines passing through a pair of points $p_i$ and $p_j$, and the proper transforms
$$
Q_j \sim 2L + E_j - \sum \limits_{i = 1}^6 E_i
$$
\noindent of the conics passing through five points from the set $\{ p_1, p_2, p_3, p_4, p_5, p_6\}$.

In this notation one has:
$$
E_i \cdot E_j = 0; \qquad E_i \cdot L_{ij} = 1; \qquad E_i \cdot L_{jk} = 0;
$$
$$
L_{ij} \cdot L_{ik} = 0; \qquad L_{ij} \cdot L_{kl} = 1; \qquad E_i \cdot Q_i = 0; \qquad E_i \cdot Q_j = 1;
$$
$$
Q_i \cdot Q_j = 0; \qquad Q_i \cdot L_{ij} = 1; \qquad Q_i \cdot L_{jk} = 0.
$$
\noindent where $i$, $j$ and $k$ are different numbers from the set $\{1, 2, 3, 4, 5, 6\}$.
\end{remark}

\subsection{Groups}

In this subsection we collect some results and notation concerning groups used in this paper.

We use the following notation:

\begin{itemize}

\item $\CG_n$ denotes the cyclic group of order $n$;

\item $\DG_{2n}$ denotes the dihedral group of order $2n$;

\item $\SG_n$ denotes the symmetric group of degree $n$;

\item $\AG_n$ denotes the alternating group of degree $n$;

\item $(i_1 i_2 \ldots i_j)$ denotes a cyclic permutation of $i_1$, \ldots, $i_j$;

\item $\VG_4$ denotes the Klein group isomorphic to $\CG_2^2$;

\item $\langle g_1, \ldots, g_n \rangle$ denotes a group generated by $g_1$, \ldots, $g_n$;

\item $\operatorname{diag}(a_1, \ldots, a_n)$ denotes the diagonal $n \times n$ matrix with entries $a_1$, \ldots $a_n$;

\item $\mathrm{i} = \sqrt{-1}$;

\item $\xi_n = e^{\frac{2\pi \mathrm{i}}{n}}$

\item $\omega = \xi_3 = e^{\frac{2\pi \mathrm{i}}{3}}$.

\end{itemize}

The group $\SG_5$ can act on a cubic surface. Therefore it is important to know some facts about subgroups of this group. The following lemma is an easy exercise.

\begin{lemma}
\label{S5subgroups}
Any nontrivial subgroup $G \subset \SG_5$ contains a normal subgroup $N$ conjugate in $\SG_5$ to one of the following groups:

\begin{itemize}

\item $\CG_2 \cong \langle(12)\rangle$,
\item $\CG_2 \cong \langle(12)(34)\rangle$,
\item $\CG_3 \cong \langle(123)\rangle$,
\item $\VG_4 \cong \langle(12)(34), (13)(24)\rangle$,
\item $\CG_5 \cong \langle(12345)\rangle$,
\item $\AG_5$.

\end{itemize}

\end{lemma}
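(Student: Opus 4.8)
The plan is to reduce the statement to the classification of characteristically simple subgroups of $\SG_5$ by passing to a minimal normal subgroup. Since $G$ is nontrivial and finite, it possesses a minimal nontrivial normal subgroup $N$. Such an $N$ is characteristically simple, hence isomorphic to a direct product $S^k$ of copies of a simple group $S$, and as $N \subset \SG_5$ its order divides $|\SG_5| = 120 = 2^3 \cdot 3 \cdot 5$, which already restricts the possibilities severely. First I would rule out nonabelian $S$: a nonabelian simple group has order at least $60$, and the only one of order dividing $120$ is $\AG_5$; since $|\AG_5|^2$ does not divide $120$ we get $k = 1$, and because $\AG_5$ is the unique subgroup of index $2$ in $\SG_5$ we conclude $N = \AG_5$, which is on the list.

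For abelian $S$ we have $S \cong \CG_p$ with $p \in \{2, 3, 5\}$ and $N \cong \CG_p^k$. The cases $p = 3$ and $p = 5$ force $k = 1$, since $9$ and $25$ do not divide $120$, and every subgroup of order $3$ (resp. $5$) is generated by a $3$-cycle (resp. a $5$-cycle), so $N$ is conjugate to $\langle (123) \rangle$ (resp. $\langle (12345) \rangle$). For $p = 2$ I would note that a Sylow $2$-subgroup of $\SG_5$ is $\DG_8$, so $\CG_2^3$ does not embed and $k \leqslant 2$. If $k = 1$ then $N$ is generated by an involution, which is either a transposition or a double transposition, and both $\langle (12) \rangle$ and $\langle (12)(34) \rangle$ appear in the list.

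The one case requiring genuine care is $N \cong \VG_4$, because $\SG_5$ has two conjugacy classes of Klein four-subgroups and only the class of $\langle (12)(34), (13)(24) \rangle$, whose three nontrivial elements are all double transpositions, is listed. Here I would invoke the minimality of $N$ to exclude the other, ``mixed'', class $\langle (12), (34) \rangle$: its nontrivial elements are two transpositions and a single double transposition, and this double transposition is the unique element of its cycle type in $N$. Hence every element of $\SG_5$ normalizing $N$ must fix it, so the order-$2$ subgroup it generates is normalized by $N_{\SG_5}(N)$, which contains $G$; this produces a proper nontrivial normal subgroup of $G$ contained in $N$, contradicting the minimality of $N$. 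Thus a minimal normal $N \cong \VG_4$ is necessarily of all-double type, and since any such group fixes a common point and acts as the Klein four-group on the remaining four (as one checks from the centralizer of a double transposition), it is conjugate to $\langle (12)(34), (13)(24) \rangle$. This exhausts all cases; the whole argument is routine, and the only point needing attention, as indicated, is the separation of the two conjugacy classes of $\VG_4$, which the minimality of $N$ resolves.
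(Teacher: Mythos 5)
Your argument is correct and complete. The paper itself offers no proof to compare against: it dismisses the lemma as ``an easy exercise'' and moves on, so you have in effect supplied the missing argument. Your route --- pass to a minimal normal subgroup of $G$, use that it is characteristically simple and hence of the form $S^k$ with $|S|^k$ dividing $120$, and then enumerate --- is clean and handles every case: the nonabelian case correctly collapses to $\AG_5$, the $p=3$ and $p=5$ cases are immediate, and the only delicate point, the two conjugacy classes of $\VG_4$ in $\SG_5$, is resolved correctly. Your observation that in the ``mixed'' class $\langle(12),(34)\rangle$ the unique double transposition generates a subgroup normalized by all of $N_{\SG_5}(N)\supseteq G$, contradicting minimality, is exactly right (and even if one did not exclude this case, the resulting $\CG_2$ is on the list anyway); the Burnside count showing that an all-double-transposition $\VG_4$ has a global fixed point, hence is conjugate to $\langle(12)(34),(13)(24)\rangle$, closes the argument.
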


\subsection{Singularities}

In this subsection we review some results about quotient singularities and their resolutions.

All singularities appearing in this paper are toric singularities. These singularities are locally isomorphic to the quotient of $\A^2$ by a cyclic group generated by $\operatorname{diag}(\xi_m, \xi_m^q)$. Such a singularity is denoted by $\frac{1}{m}(1,q)$. If $\gcd(m,q) > 1$ then the group
$$
\CG_m \cong \langle \operatorname{diag}(\xi_m, \xi_m^q) \rangle
$$
\noindent contains a reflection and the quotient singularity is isomorphic to a quotient singularity with smaller $m$.

A toric singularity can be resolved by a sequence of weighted blowups. Therefore it is easy to describe numerical properties of a quotient singularity. We list here these properties for singularities appearing in our paper.

\begin{remark}
Let the group $\CG_m$ act on a smooth surface $X$ and \mbox{$f: X \rightarrow S$} be a quotient map. Let $p$ be a singular point on $S$ of type $\frac{1}{m}(1,q)$. Let $C$ and $D$ be curves passing through $p$ such that $f^{-1}(C)$ and $f^{-1}(D)$ are $\CG_m$-invariant and tangent vectors of these curves at the point $f^{-1}(p)$ are eigenvectors of the natural action of $\CG_m$ on $T_{f^{-1}(p)} X$ (the curve $C$~corresponds to the eigenvalue $\xi_m$ and the curve $D$ corresponds to the eigenvalue~$\xi_m^q$).

Let $\pi: \widetilde{S} \rightarrow S$ be the minimal resolution of the singular point $p$. Table \ref{table1} presents some numerical properties of $\widetilde{S}$ and $S$ for the singularities appearing in this paper.

The exceptional divisor of $\pi$ is a chain of transversally intersecting exceptional curves~$E_i$ whose selfintersection numbers are listed in the last column of Table \ref{table1}. The curves~$\pi^{-1}_*(C)$ and $\pi^{-1}_*(D)$ transversally intersect at a point only the first and the last of these curves respectively and do not intersect other components of the exceptional divisor of~$\pi$.

\begin{table}
\caption{} \label{table1}

\begin{tabular}{|c|c|c|c|c|c|}
\hline
$m$ & $q$ & $K_{\widetilde{S}}^2 - K_S^2$ & $\pi^{-1}_*(C)^2 - C^2$ & $\pi^{-1}_*(D)^2 - D^2$ \rule[-7pt]{0pt}{20pt} & $E_i^2$ \\
\hline
$2$ & $1$ & $0$ & $-\dfrac{1}{2}$ & $-\dfrac{1}{2}$ \rule[-11pt]{0pt}{30pt} & $-2$ \\
\hline
$3$ & $1$ & $-\dfrac{1}{3}$ & $-\dfrac{1}{3}$ & $-\dfrac{1}{3}$ \rule[-11pt]{0pt}{30pt} & $-3$ \\
\hline
$3$ & $2$ & $0$ & $-\dfrac{2}{3}$ & $-\dfrac{2}{3}$ \rule[-11pt]{0pt}{30pt} & $-2$, $-2$ \\
\hline
$5$ & $2$ & $-\dfrac{2}{5}$ & $-\dfrac{2}{5}$ & $-\dfrac{3}{5}$ \rule[-11pt]{0pt}{30pt} & $-3$, $-2$ \\
\hline
\end{tabular}

\end{table}

\end{remark}

\subsection{Quotients}

In this subsection we collect some additional information about quotients of rational surfaces.

We use the following definition for convenience.

\begin{definition}
\label{MMPred}
Let $X$ be a $G$-surface (resp. surface), $\widetilde{X} \rightarrow X$ be its minimal resolution of singularities, and $Y$ be a $G$-minimal model (resp. minimal model) of $\widetilde{X}$. We call the surface $Y$ a \textit{$G$-MMP-reduction} (resp. \textit{MMP-reduction}) of $X$.
\end{definition}

We need some results about quotients of del Pezzo surfaces of degree $4$.

\begin{lemma}[{\cite[Remark 6.2]{Tr13}}]
\label{DP4i12}
Let a finite group $G$ act on a del Pezzo surface $X$ of degree $4$ and $N \cong \CG_2$ be a normal subgroup in $G$ such that $N$ has no curves of fixed points. Then the surface $X / N$ is $G / N$-birationally equivalent to a conic bundle $Y$ with $K_Y^2 = 2$. If there exists a $G \times \Gal \left( \kka / \ka \right)$-fixed point then $Y$ is not $G / N$-minimal and there exists a $G / N$-MMP-reduction $Z$ of $Y$ such that $K_Z^2 = 8$.
\end{lemma}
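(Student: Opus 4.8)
The plan is to reduce everything to the geometry of the generator $\iota$ of $N$ on the anticanonical model of $X$. First I would use the embedding $X \hookrightarrow \Pro^4$ given by $|-K_X|$, realizing $X$ as an intersection of two quadrics, and the linear action of $\iota$ on $\Pro^4$. After diagonalizing, the eigenvalues of $\iota$ on the $5$-dimensional space are $\pm 1$, and its fixed locus in $\Pro^4$ is the disjoint union of a linear subspace $\Pi$ and a complementary one. The condition that $N$ has no curves of fixed points forces the multiplicity pattern $(3,2)$: with a $(4,1)$ pattern the section $X \cap \Pi$ would be an anticanonical curve, i.e. a curve of fixed points. For the pattern $(3,2)$ one computes that $X$ misses the fixed line and meets the fixed plane $\Pi\cong\Pro^2$ in four reduced points $P_1,\dots,P_4$ (the base locus of a pencil of conics in $\Pi$). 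At each $P_i$ the tangent action of $\iota$ is $\operatorname{diag}(-1,-1)$, since a $+1$ eigendirection would again produce a fixed curve; hence $X/N$ carries four $A_1$ points of type $\tfrac{1}{2}(1,1)$. As $\pi\colon X\to X/N$ is \'etale in codimension one, $K_X\sim\pi^*K_{X/N}$, so $K_{X/N}^2=\tfrac12 K_X^2=2$, and by the $m=2$, $q=1$ row of Table~\ref{table1} the minimal resolution $\widetilde S\to X/N$ satisfies $K_{\widetilde S}^2=2$.

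Next I would exhibit the conic bundle by resolving $\iota$. Let $\sigma\colon\widehat X\to X$ blow up $P_1,\dots,P_4$; the lift $\widehat\iota$ acts as $\operatorname{diag}(-1,-1)$, hence trivially on each exceptional $\Pro^1$, so it fixes the four exceptional curves pointwise and $\widetilde S=\widehat X/\widehat\iota$ is precisely the minimal resolution of $X/N$. The pencil of hyperplanes through $\Pi$ cuts out on $X$ the $\iota$-invariant pencil of anticanonical curves through $P_1,\dots,P_4$, which after blow-up becomes an elliptic fibration $\widehat\pi\colon\widehat X\to\Pro^1$ preserved fibrewise by $\widehat\iota$. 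On a smooth fibre $\widehat\iota$ fixes the four points where the fibre meets the exceptional curves, so it acts as an elliptic involution with quotient $\Pro^1$; thus $\widetilde S\to\Pro^1$ has rational general fibre and yields a conic bundle $Y$ with $K_Y^2=2$ that is $G/N$-birational to $X/N$. Since $N\triangleleft G$, the group $G$ permutes $\{P_1,\dots,P_4\}$, hence fixes $\Pi$ and the pencil, so this conic bundle structure is $G/N$-equivariant. This gives the first assertion.

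For the last statement I would use a $G\times\Gal\left(\kka/\ka\right)$-fixed point $p$: being $N$-fixed and $\ka$-rational, it is one of the $P_i$, and its resolution curve $C=\phi(\sigma^{-1}(p))$ on $\widetilde S$ is a Galois- and $G/N$-invariant $(-2)$-curve. A computation with the double cover $\phi\colon\widehat X\to\widetilde S$ (where $\phi^*C=2\,\sigma^{-1}(p)$) gives $C\cdot F=1$, so $C$ is a section of the conic bundle, meeting exactly one component of each degenerate fibre at a smooth point. Contracting, in every degenerate fibre, the unique component \emph{disjoint} from $C$ is then a $G/N$- and Galois-equivariant contraction of six disjoint $(-1)$-curves; it raises $K^2$ from $2$ to $8$ while keeping $C$ as a negative section with $C^2=-2$. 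The resulting surface $Z$ is the $G/N$-minimal Hirzebruch surface $\mathbb{F}_2$ with $K_Z^2=8$, which is a $G/N$-MMP-reduction of $X/N$; in particular $Y$ is not $G/N$-minimal.

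The step I expect to be the main obstacle is making the conic bundle structure \emph{honest}, i.e. checking that the quotients of the degenerate fibres of $\widehat\pi$ are genuinely reduced conics and that there are exactly six of them, so that contracting one component per degenerate fibre lands on $\mathbb{F}_2$. This I would pin down numerically: for a relatively minimal conic bundle over $\Pro^1$ one has $\rho(\widetilde S)=2+s$ with $s$ the number of reducible fibres, and since $\rho(\widetilde S)=10-K_{\widetilde S}^2=8$ this forces $s=6$, matching $K_{\widetilde S}^2=8-s=2$. The remaining care is bookkeeping: verifying that the orbit $\{P_1,\dots,P_4\}$, the plane $\Pi$, the pencil, and the section $C$ all descend to $\ka$ and are $G/N$-stable, so that every contraction invoked above is legitimate by Theorem~\ref{GMMP}.
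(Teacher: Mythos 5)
The paper itself contains no proof of this lemma --- it is imported verbatim from the reference given in its statement --- so there is nothing internal to compare against; your argument has to stand on its own. Its architecture does: the anticanonical model, the forced eigenvalue pattern $(3,2)$, the four isolated fixed points with tangent action $-\operatorname{id}$ and the resulting four $A_1$ points with $K_{X/N}^2=2$, the fibrewise $\iota$-invariant elliptic pencil cut out by hyperplanes through $\Pi$, and the endgame in which the $(-2)$-section coming from the rational fixed point selects one component of each degenerate fibre to contract, landing on $\F_2$ with $K^2=8$ --- all of this is correct and is essentially the standard argument.

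The genuine gap is exactly the step you flagged, and the numerical patch you propose does not close it. For \emph{any} genus-$0$ fibration on a smooth rational surface one has $\rho(\widetilde S)=2+\sum_{\text{fibres}}(\#\text{components}-1)$ and $\rho=10-K^2$ automatically, so $\rho(\widetilde S)=8$ only says that the total excess number of components is $6$; it is consistent with, say, three fibres of three components each, or with a fibre containing a $(-2)$-curve. In that case $\widetilde S\to\Pro^1$ would not be a conic bundle in the sense of Definition~\ref{Cbundledef}, and its relatively minimal model could have $K^2>2$, destroying the exact value $K_Y^2=2$ that the lemma asserts and that the paper later relies on. The correct way to finish is to use that $X/N$ is a singular del Pezzo surface of degree $2$ (since $-K_X=\pi^*(-K_{X/N})$ and $\pi$ is finite, $-K_{X/N}$ is ample), so $\widetilde S$ is a weak del Pezzo surface whose only $(-2)$-curves are the four exceptional curves $\bar E_i$ over the $A_1$ points. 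These satisfy $\bar E_i\cdot\bar F=E_i\cdot F=1$, so they are sections and no fibre contains a $(-2)$-curve. Since $-K_{\widetilde S}$ is nef with $-K_{\widetilde S}\cdot\bar F=2$, every component $D$ of a degenerate fibre has $-K_{\widetilde S}\cdot D\geqslant 1$ and $D^2<0$; adjunction then forces $D^2=-1$, and the fibre class can only decompose as $D_1+D_2$ with $D_1\cdot D_2=1$, i.e.\ a reduced conic, whence $s=\rho(\widetilde S)-2=6$. With this paragraph inserted in place of the circular count, your proof is complete. (A smaller point: the claims that $X$ misses the fixed line and meets $\Pi$ in four reduced points should be anchored to the simultaneous diagonal form $\sum x_i^2=\sum a_ix_i^2=0$ with $\iota$ a product of two sign changes, where they are a one-line computation.)
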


\section{Del Pezzo surface of degree $3$}

In this Section we prove Theorem \ref{DP3}. We start from cyclic groups of prime order. The following theorem classifies actions of cyclic groups of prime order on smooth cubics.

% ссылку на топики?
\begin{theorem}[{cf. \cite[Theorem~6.10]{DI1}}]
\label{DP3cyclicclass}
Let a group $\CG_p$ of prime order $p$ act on a del Pezzo surface of degree $3$. Then one can choose homogeneous coordinates $x$, $y$, $z$, $t$ in $\Pro^3_{\kka}$ such that the equation of $\XX$ and the action of $\CG_p$ are presented in Table \ref{table2}, where $u$, $v$, $w$, $\alpha$ and $\beta$ are coefficients. These actions have different sets of fixed points on $\XX$ and correspond to different conjugacy classes of cyclic subgroups in the Weyl group $W(E_6)$ acting on $\Pic(\XX)$.

\begin{table}
\caption{} \label{table2}

\begin{center}
\begin{tabular}{|c|c|c|c|}
\hline
Type & Order & Equation & Action \\
\hline
$1$ & $2$ & $x^3 + y^3 + z^3 + \alpha xyz + t^2(ux + vy + wz) = 0$ & $\left( x : y : z : -t \right)$ \\
\hline
$2$ & $2$ & $x^3 + y^3 + xz(z + \alpha t) + yt(z + \beta t) = 0$ & $\left( x : y : -z : -t \right)$ \\
\hline
$3$ & $3$ & $x^3 + y^3 + z^3 + \alpha xyz + t^3 = 0$ & $\left( x : y : z : \omega t \right)$ \\
\hline
$4$ & $3$ & $x^3 + y^3 + z^3 + t^3 = 0$ & $\left( x : y : \omega z : \omega t \right)$ \\
\hline
$5$ & $3$ & $x^3 + y^3 + zt(ux + vy) + z^3 + t^3 = 0$ & $\left( x : y : \omega z : \omega^2 t \right)$ \\
\hline
$6$ & $5$ & $x^2 y + y^2 z + z^2 t + t^2 x = 0$ & $\left( x :\xi_5 y : \xi_5^4 z : \xi_5^3 t \right)$ \\
\hline

\end{tabular}
\end{center}

\end{table}

\end{theorem}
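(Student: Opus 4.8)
The plan is to linearize the action and then run a finite case analysis on the eigenvalues of a generator. Since $\XX$ is a smooth cubic, $-K_{\XX}$ is the hyperplane class of the embedding $\XX \hookrightarrow \Pro^3_{\kka}$, and every automorphism preserves $-K_{\XX}$, hence acts linearly on the space $H^0(\XX, -K_{\XX})$ of linear forms. Thus $\CG_p$ is induced by a subgroup of $\mathrm{PGL}_4(\kka)$ preserving $\XX$. Choosing a generator $g$ and lifting it to $\mathrm{GL}_4(\kka)$, I would rescale the lift so that $g^p$ is the identity matrix; since $\Char \ka = 0$ the matrix $g$ is then diagonalizable with $p$-th roots of unity as eigenvalues, so in suitable coordinates $g = \operatorname{diag}(\xi_p^{a_1}, \xi_p^{a_2}, \xi_p^{a_3}, \xi_p^{a_4})$.

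To bound $p$, I would use that $\Aut_{\kka}(\XX)$ acts faithfully on $\Pic(\XX)$ preserving $K_{\XX}$ and the intersection form, hence embeds into the Weyl group $W(E_6)$. As $|W(E_6)| = 2^7 \cdot 3^4 \cdot 5$, the order of any element, and in particular the prime $p$, divides this number, forcing $p \in \{2, 3, 5\}$.

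The core of the argument is the enumeration. Two lifts of $g$ differing by a $p$-th root of unity define the same element of $\mathrm{PGL}_4$; replacing $g$ by such a lift shifts all exponents $a_i$ by a common constant (and, for $p = 2, 3$, changes accordingly the semi-invariance character of the cubic form). Up to permutation of coordinates, this common shift, and passage to another generator $g^k$ of the same subgroup, there remain only finitely many eigenvalue patterns. For each one I would write down the general (semi-)invariant cubic $F$: a monomial $x^{i}y^{j}z^{k}t^{l}$ survives precisely when $a_1 i + a_2 j + a_3 k + a_4 l$ is congruent modulo $p$ to the chosen character. Imposing smoothness of $\{F = 0\}$ then does the real work, since most patterns force $F$ to be divisible by a coordinate (making $\{F = 0\}$ reducible) or to be singular; the surviving patterns reduce, after a further coordinate change inside each eigenspace (for instance bringing a smooth plane cubic to the Hesse form $x^3 + y^3 + z^3 + \alpha xyz$), exactly to the six normal forms of Table \ref{table2}.

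Finally, to see that the six types are genuinely distinct and give distinct conjugacy classes in $W(E_6)$, I would compute for each the fixed locus of $g$ on $\XX$, which is cut out by the eigenspaces of $g$ in $\Pro^3_{\kka}$, together with the number of $g$-fixed lines among the $27$ lines. After fixing an embedding $\kka \hookrightarrow \CC$, the Lefschetz fixed-point formula on the rational surface $\XX$ gives $\chi(\XX^{g}) = 2 + \operatorname{tr}\bigl(g^{*} \mid \Pic(\XX) \otimes \Q\bigr)$, so this fixed-point data determines, together with the order $p$, the characteristic polynomial of $g^{*}$ on $\Pic(\XX)$ and hence the conjugacy class of $\langle g \rangle$; comparing the resulting traces shows the six classes are pairwise different. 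I expect the main obstacle to be the smoothness bookkeeping in the enumeration step: one must check that no eigenvalue pattern is overlooked, that smoothness excludes exactly the degenerate forms, and that the normalizing coordinate changes bring each surviving family precisely to the tabulated shape — this is the point where the classification of \cite[Theorem~6.10]{DI1} is most directly invoked.
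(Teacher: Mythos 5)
The paper gives no proof of this statement at all --- it is quoted from \cite[Theorem~6.10]{DI1} --- and your outline reconstructs exactly the standard argument used there: linearize via the anticanonical embedding so that $\CG_p$ comes from $\mathrm{PGL}_4(\kka)$, diagonalize a lift of a generator, bound $p\in\{2,3,5\}$ by the faithful action on $\Pic(\XX)$ inside $W(E_6)$, enumerate eigenvalue patterns and the semi-invariant smooth cubics they admit, and separate the types by their fixed loci together with the Lefschetz trace $\chi(\XX^g)=2+\operatorname{tr}\bigl(g^*\mid\Pic(\XX)\otimes\Q\bigr)$. The one step you should make explicit is that for prime-order elements of $W(E_6)$ the trace (equivalently the characteristic polynomial, which for prime order is a product of $\Phi_1$ and $\Phi_p$ determined by the trace) really does pin down the conjugacy class --- this holds because the order-$2$ classes are of Carter type $kA_1$, $k=1,\dots,4$, the order-$3$ classes are $kA_2$, $k=1,2,3$, and there is a single order-$5$ class, all with pairwise distinct traces --- and that the smoothness bookkeeping in the enumeration, which you rightly identify as the real content, is precisely what is being delegated to \cite{DI1} rather than proved.
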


In this section we prove Theorem \ref{DP3}. Note that elements of type $3$ and $4$ of Table \ref{table2} have curves of fixed points $t = 0$ and $x = y = 0$ respectively. Therefore an element of order $3$ having no curves of fixed points has type $5$ of Table \ref{table2}.

In the latter case the following lemma holds.

\begin{lemma}
\label{DP3C35}
Let a finite group $G$ act on a del Pezzo surface $X$ of degree $3$ and $N \cong \CG_3$ be a normal subgroup in $G$ such that $N$ acts as in type $5$ of Table \ref{table2}. Then the surface $X / N$ is $G / N$-birationally equivalent to a del Pezzo surface of degree $3$.
\end{lemma}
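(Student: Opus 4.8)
The goal is to understand the quotient of $X$ by the order-$3$ action of type $5$, which in suitable coordinates is given by the equation
$$
x^3 + y^3 + zt(ux + vy) + z^3 + t^3 = 0
$$
with the involution-of-order-three $(x:y:z:t) \mapsto (x:y:\omega z:\omega^2 t)$. Since the statement concerns a $G/N$-birational equivalence, it suffices to work over $\kka$ and produce a geometrically defined birational map that is compatible with the residual action of $G/N$; the Galois descent is automatic because all the geometry is carried out equivariantly with respect to the whole group $G \times \Gal(\kka/\ka)$.

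\emph{First I would locate the fixed locus of $N$ and the resulting quotient singularities.} The generator acts on $\Pro^3$ with eigenvalues $1,1,\omega,\omega^2$, so its fixed locus in $\Pro^3$ consists of the line $\{z=t=0\}$ and the two points $(0:0:1:0)$ and $(0:0:0:1)$. Since type $5$ has no curve of fixed points on $\XX$, the intersection of $X$ with the fixed line $\{z=t=0\}$ is the three points where $x^3+y^3=0$, and the two coordinate points $(0:0:1:0)$, $(0:0:0:1)$ lie on $X$ as well (they satisfy the equation). Thus $N$ fixes exactly five points of $\XX$. At each of the three points on $\{z=t=0\}$ the tangent action is $(\omega,\omega^2)$, giving an $A_2$-singularity of type $\frac{1}{3}(1,2)$ on the quotient, whereas at $(0:0:1:0)$ and $(0:0:0:1)$ the tangent action is $(1,\omega)$ or $(1,\omega^2)$, i.e.\ $\frac{1}{3}(1,1)$, giving a singularity that the minimal resolution contracts to a single $(-3)$-curve. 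These are precisely the toric singularities tabulated in Table \ref{table1}, which is what makes the bookkeeping tractable.

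\emph{Next I would compute $K^2$ and the Picard number of the MMP-reduction using the numerical data of Table \ref{table1}.} Let $Y = X/N$ and let $\widetilde{Y} \to Y$ be the minimal resolution; I would compute $K_{\widetilde{Y}}^2$ by starting from the smooth quotient contribution and subtracting the discrepancy defects recorded in the third column of Table \ref{table1} ($-\tfrac13$ for each $\frac{1}{3}(1,1)$ point, $0$ for each $\frac{1}{3}(1,2)$ point). Because $K_{\XX}^2 = 3$ and the quotient map has degree $3$, the naive count gives $K^2$ of the smooth locus equal to $1$; adjusting by the two $(-1/3)$-contributions from the two $\frac{1}{3}(1,1)$ points yields $K_{\widetilde{Y}}^2 = 1 - 2\cdot(-\tfrac13)$, and then contracting the resolution $(-3)$-curves and the $A_2$-chains back down, while tracking $\Pic^{G/N}$, should land on a surface with the right invariants to be (birationally, $G/N$-equivariantly) a cubic surface. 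The key is that after contracting the two $(-3)$-curves coming from the $\frac{1}{3}(1,1)$ points one recovers $K^2 = 3$ with the images of the five fixed points giving a del Pezzo configuration.

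\emph{The main obstacle.} The hard part will be verifying that the singular del Pezzo surface one obtains as $X/N$ has \emph{exactly} the singularities predicted (and no worse), and then showing that its minimal resolution, after the equivariant MMP with respect to $G/N$, is genuinely a smooth cubic surface rather than some other del Pezzo surface of degree $3$ — i.e.\ pinning down that $-K_Y$ is ample with $K_Y^2 = 3$ and that the resolution has no further $(-1)$-curves to contract beyond those accounted for. I expect this to require an explicit analysis of the anticanonical map of $Y$, perhaps by writing down invariant monomials of the $N$-action (the ring of invariants is generated by $x$, $y$, $z^3$, $t^3$, $zt$) and identifying the image of $X/N$ in the weighted projective space they define, then checking that the anticanonical model is again a cubic. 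The delicate point is controlling the $G/N$-equivariance throughout, so that the final cubic carries a genuine action of the residual group and the equivalence $X/N \approx Y$ is $G/N$-birational as claimed.
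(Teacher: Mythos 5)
There is a genuine gap, and it starts with a computational error. You claim that the two isolated eigenpoints $(0:0:1:0)$ and $(0:0:0:1)$ of the action $(x:y:z:t)\mapsto(x:y:\omega z:\omega^2 t)$ lie on $X$; they do not, since the equation contains the terms $z^3+t^3$ and evaluates to $1$ at each of them. (This is not an accident: an $N$-fixed point of $\XX$ at which the tangent action had an eigenvalue $1$ would force a curve of fixed points, which type $5$ excludes.) Hence $N$ has exactly the three fixed points on $\{z=t=0\}$, each with tangent action $\operatorname{diag}(\omega,\omega^2)$, the quotient has exactly three $A_2$ points and no $\tfrac13(1,1)$ points, and since $A_2$ is Du Val the minimal resolution is crepant: $K_{\widetilde{X/N}}^2 = \tfrac13 K_X^2 = 1$, not $1+\tfrac23$. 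Your subsequent numerology is therefore built on phantom singularities.

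The deeper missing idea is how to climb from $K^2=1$ back to $K^2=3$. Your plan of ``contracting the resolution $(-3)$-curves and the $A_2$-chains back down'' is not an operation in the smooth category --- one can only blow down $(-1)$-curves --- and in any case there are no $(-3)$-curves here. What the argument actually requires is to identify the two $N$-invariant hyperplane sections $C_1=\{z=0\}$ and $C_2=\{t=0\}$, each passing through all three fixed points with $C_1\cdot C_2$ concentrated there; their images on $X/N$ meet only at the three $A_2$ points, so their proper transforms on $\widetilde{X/N}$ are disjoint, and Table \ref{table1} gives
$$
\pi^{-1}_*f(C_j)^2 \;=\; \tfrac13 C_j^2 - 3\cdot\tfrac23 \;=\; -1 .
$$
Contracting these two $(-1)$-curves $G/N$-equivariantly raises $K^2$ from $1$ to $3$, and one concludes the result is a genuine del Pezzo surface of degree $3$ by noting that $\widetilde{X/N}$ is a weak del Pezzo whose only curves of self-intersection less than $-1$ are the six exceptional components over the $A_2$ points, all of which become harmless after the contraction. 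Without locating these two curves your construction stalls at a weak del Pezzo surface of degree $1$ and never reaches a cubic.
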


\begin{proof}
Let $\XX$ be given by equation
$$
x^3 + y^3 + zt(ux + vy) + z^3 + t^3 = 0
$$
\noindent in $\Pro^3_{\kka}$ and $N$ act as
$$
(x : y : z: t) \mapsto \left( x : y : \omega z : \omega^2 t \right).
$$
\noindent The fixed points of $N$ lie on the line $z = t = 0$. Thus $N$ has three fixed points $q_1$, $q_2$ and $q_3$. One can easily check that on the tangent spaces of $\XX$ at these points $N$ acts as~$\langle\operatorname{diag}(\omega, \omega^2)\rangle$. Denote by $C_1$ and $C_2$ invariant curves $z = 0$ and $t = 0$ each passing through the three points $q_i$.

Let $f: X \rightarrow X / N$ be the quotient morhism and
$$
\pi: \widetilde{X / N} \rightarrow X / N
$$
\noindent be the minimal resolution of singularities. The curves $f(C_1)$ and $f(C_2)$ meet each other at the three singular points of $X / N$ and $f(C_1) \cdot f(C_2) = 1$. Thus two curves $\pi^{-1}_* f (C_j)$ are disjoint. Moreover (see Table \ref{table1}), one has
$$
\pi^{-1}_* f (C_j)^2 = f(C_j)^2 - 3 \cdot \frac{2}{3} = \frac{1}{3} C_j^2 - 2 = -1.
$$
Therefore we can $G / N$-equivariantly contract the two $(-1)$-curves $\pi^{-1}_* f (C_j)$ and get a surface~$Y$ with $K_Y^2 = 3$.

The surface $X / N$ has only Du Val singularities. Therefore $X / N$ is a singular del Pezzo surface and $\widetilde{X / N}$ is a weak del Pezzo surface containing exactly six curves $\pi^{-1}(q_i)$ whose selfintersection is less than $-1$. Thus $Y$ does not contain curves with selfintersection less than $-1$. So $Y$ is a del Pezzo surface of degree $3$.
\end{proof}

\begin{remark}
\label{Eckardtpoints}
Note that in the notation of Lemma \ref{DP3C35} there are two points on the surface $Y$ where three $(-1)$-curves meet each other. These points are images of $\pi^{-1}_* f (C_j)$. Such a point is called an \textit{Eckardt point} (see Definition \ref{Eckardtpt} below).
\end{remark}

\begin{remark}
\label{DP3C35rat}
Note that in the notation of Lemma \ref{DP3C35} if $\rho(X)^G > 1$ then $X$ is not \mbox{$G$-minimal} by Theorem \ref{MinCB}. Therefore the quotient of $X / N$ is equivalent to a quotient of a del Pezzo surface with degree greater than $3$ by a group of order $3$. By Theorem~\ref{ratquot4} such a quotient is $\ka$-rational.
\end{remark}

In Section $4$ for non-$\ka$-rational quotient $X / \CG_3$ of $\ka$-rational surface $X$ we find restrictions on the image of the Galois group $\Gal\left(\kka / \ka\right)$ in the Weyl group $W(E_6)$ which acts on~$\Pic(\XX)$.

Now we show that in all other cases of Theorem \ref{DP3} the quotient of $X$ is $\ka$-rational.

\begin{lemma}
\label{DP3cyclic}
Let a finite group $G$ act on a del Pezzo surface $X$ of degree $3$ and $N \cong \CG_p$ be a normal cyclic subgroup of prime order in $G$ such that $N$ acts not as in type $5$ of Table~\ref{table2}. Then there exists a \mbox{$G / N$-MMP-reduction $Y$} of $X / N$ such that $K_Y^2 \geqslant 5$.
\end{lemma}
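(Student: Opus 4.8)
The plan is to go through the remaining types of Table \ref{table2} one at a time, since $N$ is cyclic of prime order and acts (up to the excluded type $5$) as one of types $1,2,3,4,6$. In each case I would analyze the quotient $X/N$ explicitly: locate the fixed points and curves of fixed points of $N$ on $\XX$, determine the resulting quotient singularities via the local action on tangent spaces, and then use the numerical data in Table \ref{table1} together with Theorem \ref{GMMP} to control $K^2$ under the minimal resolution $\pi \colon \widetilde{X/N} \to X/N$ and the subsequent $G/N$-equivariant contraction of $(-1)$-curves. The strategy mirrors the proof of Lemma \ref{DP3C35}: resolve, identify disjoint $(-1)$-curves coming from invariant curves through the singular points, contract them equivariantly, and read off the degree of the resulting $G/N$-MMP-reduction~$Y$.

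For the types with a curve of fixed points — type $3$ (the curve $t=0$) and type $4$ (the curve $x=y=0$) — the quotient map is étale away from that curve, so $X/N$ is smooth and the curve of fixed points gives a branch curve. Here I would compute $K^2$ of the quotient directly (using that $\XX \to \XX/N$ is branched along a single smooth curve) and then pass to a $G/N$-minimal model; for an order-$3$ or order-$2$ action branched over a curve of the right genus the degree should climb well past $5$. For the remaining types with only isolated fixed points — types $1,2$ of order $2$ and type $6$ of order $5$ — I would count the fixed points, determine the singularity types (for order $2$ these are $\frac{1}{2}(1,1)$, i.e. $A_1$ Du Val points; for order $5$ the point of type $\frac{1}{5}(1,2)$) and apply the appropriate rows of Table \ref{table1} to compute $K_{\widetilde{X/N}}^2$, then contract the available $(-1)$-curves equivariantly. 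In each case $X/N$ has only Du Val singularities, so $\widetilde{X/N}$ is a weak del Pezzo surface and $Y$ is a genuine (smooth) del Pezzo surface whose degree I can bound from below by $5$.

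The main obstacle will be the order-$2$ cases, types $1$ and $2$. For type $1$, the involution $(x:y:z:-t)$ fixes the hyperplane section $t=0$, which is an entire curve, so this is really a branched-double-cover situation and the quotient tends to have large degree; but for type $2$, $(x:y:-z:-t)$, the fixed locus is isolated and one must carefully count the four $A_1$ points and verify that enough $G/N$-invariant $(-1)$-curves are available on $\widetilde{X/N}$ to contract down to degree at least $5$. The delicate point throughout is to ensure that the $(-1)$-curves one wishes to contract form a $G/N$-invariant and Galois-invariant disjoint set, so that Theorem \ref{GMMP} applies over $\ka$ rather than merely over $\kka$; this requires understanding how the full group $G/N$ permutes the fixed points and the invariant curves through them, not just the geometry over the algebraic closure.

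Once the degree bound $K_Y^2 \geqslant 5$ is established in each case, the lemma follows, and the $\ka$-rationality of $X/G$ claimed in Theorem \ref{DP3} will then be deduced (in the sequel to this lemma) by combining this reduction with Theorem \ref{ratquot4} applied to the action of $G/N$ on~$Y$.
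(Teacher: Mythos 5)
Your overall strategy (a case-by-case analysis of types $1$, $2$, $3$, $4$, $6$ of Table \ref{table2}, using Table \ref{table1} to track $K^2$ through resolution and equivariant contraction) is the same as the paper's, and your treatment of types $1$ and $3$ --- where $N$ pointwise fixes a hyperplane section, so the Hurwitz formula immediately gives $K_{X/N}^2 = 6$ and $9$ respectively --- is essentially correct. But the proposal has genuine gaps in the three remaining cases, and in each one the missing ingredient is a specific geometric fact rather than a routine computation. For type $4$ you assert that $N$ has a curve of fixed points $x=y=0$ and that the quotient is a smooth branched cover; in fact the line $x=y=0$ is not contained in the Fermat cubic, and the fixed locus of a type-$4$ element consists of six isolated points (three on each of the two pointwise-fixed lines of $\Pro^3_{\kka}$), each producing a $\frac{1}{3}(1,1)$ singularity. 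The paper's proof then hinges on the observation that the nine lines $C_{ij}$ joining the two triples of fixed points all lie in $\XX$, so that their proper transforms on the resolution are nine disjoint $(-1)$-curves whose contraction yields $K_Y^2=8$; nothing in your plan produces these curves.

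For type $2$ your premise that the fixed locus is four isolated points is impossible: an involution with only isolated fixed points satisfies $K_X = f^*K_{X/N}$, so $K_X^2 = 2K_{X/N}^2$ would have to be even, whereas $K_X^2=3$. The actual argument (via the Lefschetz fixed-point count) is that one of the two pointwise-fixed lines of $\Pro^3_{\kka}$ must be contained in $X$; this line is a $\Gal\left(\kka/\ka\right)$- and $G$-invariant $(-1)$-curve, and contracting it reduces the problem to an involution on a del Pezzo surface of degree $4$ with four isolated fixed points, which is then handled by Lemma \ref{DP4i12} --- a result your proposal never invokes and whose content (passing from the quotient, a conic bundle with $K^2=2$, to a reduction with $K^2=8$ using the rational fixed point) cannot simply be replaced by ``contracting available $(-1)$-curves'' on $\widetilde{X/N}$. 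Finally, for type $6$ a direct quotient gives $K_{X/N}^2 = \frac{3}{5}$ and, after resolving the four $\frac{1}{5}(1,2)$ points, $K_{\widetilde{X/N}}^2 = -1$, from which it is far from clear how to climb to degree $\geqslant 5$; the paper instead first contracts the two $\CG_5$-invariant lines $x=z=0$ and $y=t=0$ lying in $\XX$ to obtain a del Pezzo surface of degree $5$ and only then quotients, citing Theorem \ref{ratquot4}. So while the skeleton of your plan matches the paper, the decisive steps in types $2$, $4$ and $6$ are either absent or rest on incorrect descriptions of the fixed loci.
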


\begin{proof}
Let us consider the possibilities case by case.

In types $1$ and $3$ of Table \ref{table2} the group $N$ has a pointewisely fixed the hyperplane \mbox{section $t = 0$}. In type~$3$ there are no other fixed points and in type $1$ there is only one other fixed point $(0 : 0 : 0 : 1)$. Therefore by the Hurwitz formula
$$
K_{X / \CG_2}^2 = \frac{1}{2}\left( 2K_X \right)^2 = 6, \quad\quad K_{X / \CG_3}^2 = \frac{1}{3}\left( 3K_X \right)^2 = 9
$$
\noindent in types $1$ and $3$ respectively. The surface $X / N$ has at most du Val singularities. Therefore for the minimal resolution of singularities $\widetilde{X / N} \rightarrow X / N$ one has $K_{\widetilde{X / N}}^2 = K_{X / N}^2$. Thus for any $G / N$-MMP-reduction $Y$ of $X / N$ one has $K_Y^2 \geqslant 6$.

~

In type $2$ of Table \ref{table2} the group $\CG_2$ fixes pointwisely the lines $x = y = 0$ and $z = t = 0$. If one of these lines is tangent to $X$ at a point $t$ then in the neighbourhood of this point the group~$\CG_2$ acts as a reflection. Therefore there is a curve of $\CG_2$-fixed points passing through~$t$ contained in $X$. Thus one of the pointewisely fixed lines is contained in $X$. Therefore this line is defined over $\ka$ and can be $G$-equivariantly contracted. If both lines $x = y = 0$ and $z = t = 0$ intersect the surface~$X$ transversally then there are six \mbox{$\CG_2$-fixed} points on $X$ but by the Lefschetz fixed-point formula there are exactly four $\CG_2$-fixed points if $\CG_2$ does not have pointwisely fixed curves.

So the quotient $X / N$ is $G / N$-birationally equivalent to the quotient of del Pezzo surface of degree $4$ by a group of order $2$ having $4$ fixed points one of which is \mbox{$G \times \Gal \left( \kka / \ka \right)$-fixed}. By Lemma \ref{DP4i12} there exists a $G / N$-MMP-reduction $Y$ of the latter quotient such \mbox{that $K_Y^2 = 8$}.

~

In type $4$ of Table \ref{table2} the group $\CG_3$ fixes pointwisely the lines $x = y = 0$ and $z = t = 0$. These lines intersect $\XX$ given by
$$
x^3 + y^3 + z^3 + t^3 = 0
$$
\noindent at points $p_1$, $p_2$, $p_3$ and $q_1$, $q_2$, $q_3$ respectively. Let $C_{ij}$ be a line in $\Pro^3_{\kka}$ passing through $p_i$ and $q_j$.

Assume that $C_{ij}$ does not lie in $\XX$. For some integer $a$ the involution
$$
(x : y : z : t) \mapsto (\omega^{2a} z : t : \omega^a x : y)
$$
\noindent permutes points $p_i$ and $q_j$, thus the line $C_{ij}$ is invariant under the action of this involution. Therefore the line $C_{ij}$ cannot be tangent to $\XX$ at any of the points $p_i$ and $q_j$. Then the third point of intersection of $C_{ij}$ with $\XX$ is $\CG_3$-fixed. Thus there are three $\CG_3$-fixed points on $C_{ij}$ but this contradicts the fact that the action of $\CG_3$ is faithful on $C_{ij}$. So $C_{ij}$ lies in $\XX$ and $C_{ij}^2 = -1$.

Let $f: X \rightarrow X / N$ be the quotient morphism and
$$
\pi: \widetilde{X / N} \rightarrow X / N
$$
\noindent be the minimal resolution of singularities. Then $f(p_i)$ and $f(q_j)$ are singularities of type~$\frac{1}{3}(1,1)$. Thus $\pi^{-1}_*f(C_{ij})$ are $9$ disjoint $(-1)$-curves (see Table \ref{table1}). We can contract these curves and get a surface $Y$. One has
$$
K_Y^2 = K_{\widetilde{X / N}}^2 + 9 = K_{X / N}^2 + 9 - 6 \cdot \frac{1}{3} = \frac{1}{3}K_X^2 + 7 = 8.
$$

In type $6$ of Table \ref{table2} the group $\CG_5$ has two invariant lines $x = z = 0$ and $y = t = 0$ lying in $\XX$ given by the equation
$$
x^2 y + y^2 z + z^2 t + t^2 x = 0.
$$
\noindent One can $G$-equivariantly contract this pair and get a del Pezzo surface of degree $5$.

So the quotient $X / N$ is $G / N$-birationally equivalent to the quotient of del Pezzo surface of degree $5$ by a group of order $5$. By Theorem \ref{ratquot4} this quotient is $\ka$-rational so it is \mbox{$G / N$-birationally} equivalent to a surface $Y$ such that $K_Y^2 \geqslant 5$.
\end{proof}

\begin{corollary}
\label{DP3O6}
Let a finite group $G$ of order $6$ act on a del Pezzo surface $X$ of degree $3$. Then the surface $X / G$ is birationally equivalent to a surface $Y$ such that $K_Y^2 \geqslant 5$.
\end{corollary}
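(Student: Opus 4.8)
The plan is to reduce the quotient $X/G$, where $|G| = 6$, to a quotient of a del Pezzo surface of higher degree by a smaller group, and then invoke the lemmas already established. First I observe that a group of order $6$ is isomorphic either to $\CG_6$ or to $\SG_3 \cong \DG_6$; in both cases $G$ contains a normal subgroup $N \cong \CG_3$ (the unique Sylow $3$-subgroup, which is normal since its index $2$ is the smallest prime dividing $|G|$). This normal $\CG_3$ is exactly the object to which Lemmas \ref{DP3C35} and \ref{DP3cyclic} apply, so the natural strategy is a two-step reduction: first quotient by $N$, then by $G/N \cong \CG_2$.

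The first step is to control $X/N$. By Theorem \ref{DP3cyclicclass} the action of $N \cong \CG_3$ is of type $3$, $4$, or $5$ in Table \ref{table2}. If $N$ acts as type $3$ or $4$, then Lemma \ref{DP3cyclic} gives a $G/N$-MMP-reduction $Y'$ of $X/N$ with $K_{Y'}^2 \geqslant 6$ (in fact $8$ or $9$). If $N$ acts as type $5$, then by Lemma \ref{DP3C35} the quotient $X/N$ is $G/N$-birationally equivalent to a del Pezzo surface $Y'$ of degree $3$, so $K_{Y'}^2 = 3$. In every case I arrive at a del Pezzo surface $Y'$ carrying an action of $G/N \cong \CG_2$.

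The second step handles the residual involution $G/N \cong \CG_2$ acting on $Y'$. When $K_{Y'}^2 \geqslant 6$, I would apply Theorem \ref{ratquot4} directly: the quotient of a del Pezzo surface of degree $\geqslant 6$ (here with a rational point inherited from $X(\ka) \ne \varnothing$) by any group is $\ka$-rational, hence birationally equivalent to a surface $Y$ with $K_Y^2 \geqslant 5$. The only substantive remaining case is type $5$, where $Y'$ is a cubic and I must quotient it by an order-$2$ subgroup. Here the key point is that the involution on $Y'$ is \emph{not} of type $5$ of Table \ref{table2} (which has odd order $3$), so it falls under Lemma \ref{DP3cyclic}: there is a $\CG_2$-MMP-reduction $Y$ of $Y'/(G/N)$ with $K_Y^2 \geqslant 6$. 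Composing the two birational reductions yields $X/G \approx Y$ with $K_Y^2 \geqslant 5$, as required.

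The main obstacle I anticipate is the bookkeeping in the type $5$ case: I must verify that the surface $Y'$ produced by Lemma \ref{DP3C35} genuinely carries a faithful $G/N \cong \CG_2$-action to which Theorem \ref{DP3cyclicclass} and Lemma \ref{DP3cyclic} apply, and that the induced involution, being of even order, cannot itself be of the excluded type $5$. Since type $5$ requires order $3$ while $G/N$ has order $2$, this exclusion is automatic, so the apparent difficulty dissolves and the chain of reductions closes. A minor technical check is that the rational point condition $X(\ka) \ne \varnothing$ is preserved under these quotient maps, which holds because the quotient morphism is defined over $\ka$ and surjective on $\ka$-points up to the finite-degree subtleties already absorbed into the cited rationality criteria.
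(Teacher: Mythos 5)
Your proposal is correct and follows essentially the same route as the paper: pass to the normal Sylow $3$-subgroup $N\cong\CG_3$, use Lemma \ref{DP3cyclic} (types other than $5$) or Lemma \ref{DP3C35} (type $5$) to replace $X/N$ by a del Pezzo surface with a residual $G/N\cong\CG_2$-action, and then finish with Theorem \ref{ratquot4} in the high-degree case and Lemma \ref{DP3cyclic} again in the cubic case, noting that an involution cannot be of type $5$. The extra checks you flag (faithfulness of the induced $\CG_2$-action, exclusion of type $5$) are exactly the points the paper leaves implicit, and they resolve as you say.
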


\begin{proof}
Let $N \subset G$ be the subgroup of order $3$. Then by Lemmas \ref{DP3cyclic} and \ref{DP3C35} the quotient $X / N$ is $G / N$-birationally equivalent to a surface $Z$ such that either $K_Z^2 \geqslant 5$ or $Z$ is a del Pezzo surface of degree $3$. There exists an MMP-reduction $Y$ of $X / G \approx Z / (G / N)$ such that $K_Y^2 \geqslant 5$ by Theorem \ref{ratquot4} and Lemma \ref{DP3cyclic} respectively.
\end{proof}

\begin{remark}
\label{DP3type3}
Note that for an element $g$ of type $3$ of Table \ref{table2} the quotient $\XX / \langle g \rangle$ is isomorphic to $\Pro^2_{\kka}$. Therefore one has
$$
\rho\left(\XX\right)^{\langle g \rangle} = \rho\left(\Pro^2_{\kka}\right) = 1.
$$
\end{remark}

To prove Theorem \ref{DP3} we need to list all possible automorphism groups of cubic surfaces.

\begin{theorem}[{cf. \cite[Subsection 6.5, Table 4]{DI1}}]
\label{DP3groupclass}
Let $\XX$ be a del Pezzo surface of degree $3$. Then one can choose homogeneous coordinates $x$, $y$, $z$, $t$ in $\Pro^3_{\kka}$ such that the equation of $\XX$ and the full automorphism group $\Aut(\XX)$ are presented in Table \ref{table3}, where $u$, $v$ and $\alpha$ are coefficients, and $H_3(3)$ is a group generated by the transformation
$$
(x : y : z : t) \mapsto (x : \omega y : \omega^2 z : t)
$$
\noindent and a cyclic permutation of $x$, $y$ and $z$.

\begin{table}
\caption{} \label{table3}

\begin{tabular}{|c|c|c|c|}
\hline
Type & Group & Equation  \\
\hline
$\mathrm{I}$ & $\CG_3^3 \rtimes \SG_4$ & $x^3 + y^3 + z^3 + t^3 = 0$ \\
\hline
$\mathrm{II}$ & $\SG_5$ & $x^2 y + y^2 z + z^2 t + t^2 x = 0$ \\
\hline
$\mathrm{III}$ & $H_3(3) \rtimes \CG_4$ & $x^3 + y^3 + z^3 + \alpha xyz + t^3 = 0$ \\
\hline
$\mathrm{IV}$ & $H_3(3) \rtimes \CG_2$ & $x^3 + y^3 + z^3 + \alpha xyz + t^3 = 0$ \\
\hline
$\mathrm{V}$ & $\SG_4$ & $t(x^2 + y^2 + z^2) + \alpha xyz + t^3 = 0$ \\
\hline
$\mathrm{VI}$ & $\SG_3 \times \CG_2$ & $x^3 + y^3 + \alpha zt(x + y) + z^3 + t^3 = 0$ \\
\hline
$\mathrm{VII}$ & $\CG_8$ & $x^3 + xy^2 + yz^2 + zt^2 = 0$ \\
\hline
$\mathrm{VIII}$ & $\SG_3$ & $x^3 + y^3 + zt(ux + vy) + z^3 + t^3 = 0$ \\
\hline
$\mathrm{IX}$ & $\CG_4$ & $x^3 + \alpha y^3 + xy^2 + yz^2 + zt^2 = 0$ \\
\hline
$\mathrm{X}$ & $\CG_2^2$ & $x^3 + y^3 + z^3 + \alpha xyz + t^2(x + y + uz) = 0$ \\
\hline
$\mathrm{XI}$ & $\CG_2$ & $x^3 + y^3 + z^3 + \alpha xyz + t^2(x + uy + vz) = 0$ \\
\hline
\end{tabular}

\end{table}

\end{theorem}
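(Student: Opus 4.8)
The plan is to reduce the statement to finite linear representation theory via the anticanonical embedding. Since $\XX$ is a del Pezzo surface of degree $3$, the divisor $-K_{\XX}$ is very ample and realizes $\XX$ as a cubic surface in $\Pro^3_{\kka}$; every element of $\Aut(\XX)$ preserves $K_{\XX}$ and therefore acts linearly on $V = H^0(\XX, -K_{\XX})^* \cong \kka^4$. Thus $\Aut(\XX)$ is precisely the group of projective transformations of $\Pro^3_{\kka}$ carrying the cubic form $F$ to a scalar multiple of itself, and in particular $\Aut(\XX) \subset \mathrm{PGL}_4(\kka)$. First I would establish finiteness: the induced faithful action on $\Pic(\XX)$ preserves both the intersection form and $K_{\XX}$, so it embeds $\Aut(\XX)$ into the Weyl group $W(E_6)$, which is finite (equivalently, a smooth cubic form has no infinitesimal symmetries, so its stabilizer in $\mathrm{PGL}_4$ is finite).

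Next I would turn the problem into a finite linear one. Choosing a lift of $\Aut(\XX)$ to a finite subgroup $\widetilde{G} \subset \mathrm{GL}(V)$, the cubic form $F \in \operatorname{Sym}^3 V^*$ becomes a semi-invariant, so the whole classification amounts to listing, up to conjugacy, the finite subgroups of $\mathrm{GL}_4(\kka)$ admitting a smooth cubic semi-invariant. The building blocks are the cyclic subgroups of prime order, already normalized in Theorem \ref{DP3cyclicclass}: these fix the eigenvalue patterns on $V$ and the admissible monomials occurring in $F$. Combining a generator of prime order with its normalizer, and using that any element of composite order decomposes into commuting prime-order parts, one determines the diagonalizable part of $\widetilde{G}$ together with the permutations of eigenlines by which it may be extended.

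I would then assemble the full groups case by case, organizing the enumeration by the image of $\Aut(\XX)$ in $W(E_6)$: Lemma \ref{S5subgroups} and the conjugacy structure of $W(E_6)$ bound which abstract groups can occur, while for each candidate the eigenspace decomposition of $V$ forces a normal form of $F$ with only a few parameters. For each normal form I would compute the exact stabilizer and separate the parameter space into strata on which the symmetry jumps. For instance, the families of type $\mathrm{III}$ and $\mathrm{IV}$ with equation $x^3 + y^3 + z^3 + \alpha xyz + t^3 = 0$ degenerate at $\alpha = 0$ to the Fermat cubic of type $\mathrm{I}$ with the much larger group $\CG_3^3 \rtimes \SG_4$, while $\SG_5$ appears only for the form $x^2 y + y^2 z + z^2 t + t^2 x = 0$ of type $\mathrm{II}$. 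Matching each stratum with a row of Table \ref{table3} yields simultaneously the equation and the full group $\Aut(\XX)$.

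The main obstacle is completeness and the stratification rather than the construction of individual examples: one must verify that no finite subgroup of $\mathrm{GL}_4(\kka)$ preserving a smooth cubic has been omitted, and that for the generic member of each family the computed stabilizer does not secretly enlarge. Controlling this requires careful use of the subgroup lattice of $W(E_6)$ together with the smoothness constraint on $F$, which eliminates many a priori admissible linear groups; the bookkeeping of exactly when special coefficient values create extra automorphisms is the most delicate part.
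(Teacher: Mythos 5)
The paper does not actually prove this statement: it is quoted, with a ``cf.'', from Dolgachev--Iskovskikh \cite[Subsection 6.5, Table 4]{DI1}, and the only original remark made about it is that the column of conditions on the parameters has been dropped. So there is no internal argument to compare yours against; the relevant comparison is with the strategy of the cited source, and there your outline is essentially the standard one: linearize $\Aut(\XX)$ via the anticanonical embedding into $\mathrm{PGL}_4(\kka)$, deduce finiteness from the faithful action on $\Pic(\XX)$ inside $W(E_6)$, classify the cyclic subgroups of prime order first (this is exactly Theorem \ref{DP3cyclicclass}), and then assemble the full groups from their prime-order normal subgroups together with the constraint that a smooth cubic form must be a semi-invariant.

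That said, as a proof of the theorem your text is only a plan. The content of the statement is the table itself --- that exactly these eleven strata occur and that the listed group is the \emph{full} automorphism group on each stratum --- and every step that would establish this is deferred: you do not enumerate the admissible eigenvalue patterns, do not derive the normal forms, do not compute a single stabilizer, and do not settle when special coefficient values enlarge the group (for instance types $\mathrm{III}$ and $\mathrm{IV}$ share the equation $x^3+y^3+z^3+\alpha xyz+t^3=0$ yet carry $H_3(3)\rtimes\CG_4$ versus $H_3(3)\rtimes\CG_2$, and type $\mathrm{VI}$ is the locus $u=v$ inside the type $\mathrm{VIII}$ family where $\SG_3$ jumps to $\SG_3\times\CG_2$). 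You correctly identify completeness and the stratification as the delicate part, but that part \emph{is} the theorem. Two smaller points: the lift of $\Aut(\XX)$ to a finite subgroup of $\mathrm{GL}_4(\kka)$ is only guaranteed up to a central extension by scalars (harmless, but it deserves a sentence), and decomposing elements of composite order into commuting prime-power parts controls cyclic subgroups only, not the extensions by permutations of eigenlines, which is where the real case analysis lives --- note that the image of $\Aut(\XX)$ in $W(E_6)$ need not land in the subgroup $\SG_6$, so Lemma \ref{S5subgroups} does not by itself bound the possibilities. To make this checkable you would need to carry out the enumeration in full, or else do what the paper does and simply cite \cite{DI1}.
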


In the paper \cite{DI1} there is one more column in this table which contains conditions on the parameters. But we are interested only in the structure of the group and its action on $\Pro^3_{\kka}$ so we omit this column.

\begin{lemma}
\label{DP3V4}
Let a finite group $G$ act on a del Pezzo surface $X$ of degree $3$ and $N \cong \VG_4$ be a normal subgroup in $G$ such that nontrivial elements of $N$ act as in type $2$ of Table \ref{table2}. Then there exists a \mbox{$G / N$-MMP-reduction $Y$} of $X / N$ such that $K_Y^2 = 6$.
\end{lemma}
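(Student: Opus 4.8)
The plan is to diagonalize the action of $N$, put $\XX$ into a normal form, analyze the quotient $X/N$ together with its singularities, and finally contract an explicit $G/N$-invariant triple of disjoint $(-1)$-curves to reach a del Pezzo surface of degree $6$. Since $N \cong \VG_4$ and each of its three involutions is of type $2$ of Table \ref{table2}, every nontrivial element has eigenvalues $\{1,1,-1,-1\}$ (up to a common scalar) on the space of coordinates. Imposing this for all three commuting involutions, together with faithfulness of the action on $\Pro^3_{\kka}$, forces the coordinates $x,y,z,t$ to carry the four distinct characters of $\VG_4$, i.e. $N$ acts by the regular representation. A count of invariant monomials then shows that the $N$-invariant cubics are spanned by $x^3,xy^2,xz^2,xt^2,yzt$, so after scaling $\XX$ is
$$
x\left(x^2 + a y^2 + b z^2 + c t^2\right) + d\,yzt = 0,
$$
with the involutions acting as $(x:y:-z:-t)$, $(x:-y:z:-t)$ and $(x:-y:-z:t)$.

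Next I would read off the fixed-point geometry. The three lines $m_1=\{x=y=0\}$, $m_2=\{x=z=0\}$, $m_3=\{x=t=0\}$ lie on $\XX$ and form a coplanar triangle in the tritangent plane $\{x=0\}$, where $m_i$ is the curve of fixed points of the $i$-th involution (exactly the contained fixed line found in the type $2$ analysis of Lemma \ref{DP3cyclic}). The remaining fixed points occur in three pairs $A_i^\pm$ on the non-contained fixed line of each involution; within a pair the two points are exchanged by the other two involutions, so $N$ has precisely three such orbits, each with stabilizer $\CG_2$ acting as $-\operatorname{id}$ on the tangent space. Writing $f\colon X\to X/N$ for the quotient, the images of the $m_i$ are smooth ramification curves, the triangle vertices map to smooth points (the local group is generated by reflections), and the three orbits $\{A_i^+,A_i^-\}$ give three Du Val points of type $A_1$. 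The ramification divisor is $m_1+m_2+m_3$, so by the Hurwitz formula $4K_{X/N}^2=(K_X-m_1-m_2-m_3)^2=12$, whence $K_{X/N}^2=3$; thus $X/N$ is a singular del Pezzo surface of degree $3$ with three nodes.

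Then comes the contraction. For each $i$ the plane residual to $m_i$ cuts out a conic on $\XX$: for instance $\{y=0\}\cap\XX=m_1\cup Q$ with $Q=\{y=0,\ x^2+bz^2+ct^2=0\}$, and one checks directly that $Q$ passes through the four points $A_2^\pm,A_3^\pm$. Working on the minimal resolution $\pi\colon\widetilde{X/N}\to X/N$ with the help of Table \ref{table1} (each node on $f(Q)$ lowers the self-intersection by $\tfrac12$, and a shared node separates two such curves), I would show that the strict transforms $\bar Q,\bar Q',\bar Q''$ of the three conics are pairwise disjoint $(-1)$-curves. As $G$ permutes $\{m_1,m_2,m_3\}$, it permutes $\{\bar Q,\bar Q',\bar Q''\}$ in the same way, so this triple is $G/N$- and Galois-invariant and can be contracted $G/N$-equivariantly. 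Contracting the three disjoint $(-1)$-curves raises the degree from $3$ to $6$ and turns the three exceptional $(-2)$-curves into conic fibers, producing a $G/N$-MMP-reduction $Y$ of $X/N$ that is a del Pezzo surface with $K_Y^2=6$.

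The main obstacle I expect is the self-intersection bookkeeping on $\widetilde{X/N}$: verifying rigorously that $\bar Q,\bar Q',\bar Q''$ really are $(-1)$-curves and pairwise disjoint. The cleanest route is to blow up the six points $A_i^\pm$ on $X$, so that the lifted $N$ acts with exceptional curves as reflection loci, and then push forward through the induced degree-$4$ quotient map $\hat X\to\widetilde{X/N}$; this computes $\bar Q^2=-1$, $\bar Q\cdot\bar Q'=0$ and $K_{\widetilde{X/N}}\cdot\bar Q=-1$ without fractional-divisor subtleties. One must also confirm that $-K_Y$ is ample so that $Y$ is honestly a del Pezzo surface of degree $6$, and check that the residual conics are reduced and irreducible, dealing with degenerate values of $a,b,c$ separately.
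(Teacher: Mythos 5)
Your proposal is correct and follows essentially the same route as the paper: the same normal form $x(x^2+y^2+z^2+t^2)+\alpha yzt=0$ (up to relabelling coordinates), the same fixed-point and Hurwitz analysis giving a singular del Pezzo surface of degree $3$ with three $A_1$ points, and the contraction of three $G/N$-invariant disjoint $(-1)$-curves on the resolution. The only cosmetic difference is that you identify these curves upstairs as the residual conics in the planes through the $m_i$, whereas the paper identifies them downstairs as the lines joining pairs of nodes in the anticanonical embedding of $X/N$ — these are the same curves, since each image $f(Q)$ has $-K_{X/N}\cdot f(Q)=1$ and passes through two of the nodes.
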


\begin{proof}

One can choose coordinates in $\Pro^3_{\kka}$ in which $\XX$ is given by the equation
$$
t(x^2 + y^2 + z^2) + \alpha xyz + t^3 = 0
$$
\noindent and nontrivial elements of $\VG_4$ switch signes of $t$ and one other variable. The set of fixed by nontrivial elements of group $\VG_4$ points consists of three pointwisely fixed lines
$$
x = t = 0, \qquad y = t = 0, \qquad z = t = 0
$$
\noindent lying in $\XX$ and six isolated fixed points $(1 : 0 : 0 : \pm \mathrm{i})$, $(0 : 1 : 0 : \pm \mathrm{i})$ and $(0 : 0 : 1 : \pm \mathrm{i})$. Thus the quotient $X / N$ is a singular del Pezzo surface with three $A_1$ singularities. By the Hurwitz formula
$$
K_{X / N}^2 = \frac{1}{4}\left(2K_X\right)^2 = 3.
$$
Let $q_1$, $q_2$ and $q_3$ be singular points of $X / N$. Consider the anticanonical embedding
$$
X / N \hookrightarrow \Pro^3_{\ka}.
$$
\noindent Denote by $C_{ij}$ a line in $\Pro^3_{\kka}$ passing through $q_i$ and $q_j$. Such a line contains two singular points on the surface $X / N$ of degree $3$, therefore all lines $C_{ij}$ lie in $X / N$. Moreover, one has
$$
K_{X / N} \cdot C_{ij} = -1.
$$
\noindent Thus we can resolve the singularities of $X / N$, then $G / N$-equivariantly contract the proper transforms of $C_{ij}$ and get a surface $Y$ with $K_Y^2 = 6$.

\end{proof}

\begin{lemma}
\label{DP3A5}
Let a finite group $G$ act on a del Pezzo surface $X$ of degree $3$ and $N \cong \AG_5$ be a normal subgroup in $G$. Then there exists a \mbox{$G / N$-MMP-reduction $Y$} of $X / N$ such that $K_Y^2 \geqslant 6$.
\end{lemma}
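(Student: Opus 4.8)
The plan is first to pin down the surface and the group. By Theorem \ref{DP3groupclass} the only entry of Table \ref{table3} whose automorphism group has order divisible by $60$ is type $\mathrm{II}$, the Clebsch cubic with $\Aut(\XX) \cong \SG_5$; indeed $|\CG_3^3 \rtimes \SG_4| = 648$ and $|H_3(3) \rtimes \CG_4| = 108$ are not divisible by $60$. Hence $\XX$ is the Clebsch cubic, $N \cong \AG_5$ is the alternating subgroup of $\SG_5$, and $G$ equals $\AG_5$ or $\SG_5$, so that $G / N$ is trivial or $\CG_2$. Because $\AG_5$ is simple I cannot factor the quotient $\XX / N$ through a chain of proper normal subgroups as in the previous lemmas, so I would analyse $\XX / \AG_5$ directly.

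The crucial preliminary step is to show that $X$ is \emph{not} $\AG_5$-minimal. Among the $27$ lines of the Clebsch cubic there is an $\AG_5$-orbit of six pairwise disjoint lines (a sixer), and its $\SG_5$-orbit consists of exactly two sixers $O_1$ and $O_2$, interchanged by the odd permutations and each preserved by $\AG_5$. The class $D = \sum_{\ell \in O_1} \ell - \sum_{\ell \in O_2} \ell$ then satisfies $D \cdot K_{\XX} = 0$, is $\AG_5$-invariant, and is sent to $-D$ by any transposition, so $\rho(\XX)^{\AG_5} = 2$ while $\rho(\XX)^{\SG_5} = 1$. By Theorem \ref{Minclass} the surface $X$ is not $\AG_5$-minimal, and I would contract the $\AG_5$-invariant sixer $O_1$ to obtain an $\AG_5$-equivariant birational morphism $\XX \to \Pro^2_{\kka}$, where $\AG_5$ now acts through its icosahedral embedding into $\operatorname{PGL}_3(\kka)$. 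Consequently $\XX / \AG_5 \approx \Pro^2_{\kka} / \AG_5$, and it remains to compute a minimal model of the latter.

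To handle $S = \Pro^2_{\kka} / \AG_5$, with quotient morphism $\pi \colon \Pro^2_{\kka} \to S$, I would examine the stabilisers of the $\AG_5$-action. Every involution acts as a projective reflection fixing a line pointwise, so the fifteen resulting mirror lines constitute the whole ramification divisor $R \sim 15 H$, where $H$ is the class of a line; every other stabiliser that fixes a point turns out to act on the tangent space as a reflection group, the only exceptions being the points of the $\AG_5$-invariant conic fixed by elements of order $3$ and $5$. The bookkeeping should yield exactly one singular point of type $\frac{1}{3}(1,2)$ and one of type $\frac{1}{5}(1,2)$ on $S$. The Hurwitz formula then gives $\pi^* K_S \sim K_{\Pro^2_{\kka}} - R \sim -18 H$, whence $K_S^2 = \frac{1}{60}(18 H)^2 = \frac{27}{5}$; correcting by Table \ref{table1} (contributions $0$ and $-\frac{2}{5}$) shows that the minimal resolution $\widetilde S$ satisfies $K_{\widetilde S}^2 = \frac{27}{5} - \frac{2}{5} = 5$.

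Finally, $\widetilde S$ is a smooth rational surface with $K_{\widetilde S}^2 = 5$ carrying the residual action of $G / N \in \{1, \CG_2\}$, and the passage from $5$ to $6$ is forced exactly as in the proof of Corollary \ref{piccrit}: a conic bundle with $K^2 = 5$ is not $G/N$-minimal by Theorem \ref{MinCB}, while a del Pezzo surface of degree $5$ would have $\rho^{G/N} \geqslant 2$ (for $G/N$ trivial this is clear, and every involution of such a surface lies in $\SG_5$ and fixes a nonzero class of $K^\perp$) and hence is not $G/N$-minimal either. Therefore some $G / N$-MMP-reduction $Y$ of $\XX / \AG_5$ satisfies $K_Y^2 \geqslant 6$. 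The main obstacle is the second and third steps: verifying the icosahedral $\operatorname{PGL}_3$-structure together with the $\AG_5$-orbit of six skew lines, and carrying out the stabiliser-by-stabiliser ramification and singularity analysis on $\Pro^2_{\kka} / \AG_5$ carefully enough to obtain the precise value $K_{\widetilde S}^2 = 5$; once this is in hand, Theorem \ref{MinCB} makes the final bound automatic.
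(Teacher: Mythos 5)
Your geometric computations over $\kka$ are correct and follow a genuinely different route from the paper: you contract an $\AG_5$-invariant sixer to reduce to $\Pro^2_{\kka}/\AG_5$ and compute via the Hurwitz formula on $\Pro^2_{\kka}$, whereas the paper works directly with $X/N$, resolves its four singular points (two of type $\frac{1}{5}(1,2)$, one $A_1$, one $A_2$), contracts the images $E$ and $Q$ of the \emph{two} sixers simultaneously to reach a surface $Z$ with $K_Z^2=3$ and $\rho(Z)^{G/N}\geqslant 4$, and concludes by Corollary \ref{piccrit}. Your identification of the Clebsch cubic, the two singular points of $\Pro^2_{\kka}/\AG_5$, and the value $K^2=5$ for its minimal resolution are all consistent with the paper's numbers, so the geometry is sound.

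There are, however, two arithmetic gaps. First, the contraction of a single sixer need not be defined over $\ka$: the Galois group commutes with $\AG_5$ on $\Pic(\XX)$ but may interchange the two sixers $O_1$ and $O_2$, so $\widetilde S$ is a priori defined only over $\kka$ and one cannot take its $G/N$-MMP-reduction over $\ka$; the paper sidesteps this by contracting the Galois-stable pair $\pi^{-1}_*(E)$, $\pi^{-1}_*(Q)$ together. Second, and more seriously, your final step does not actually force $K_Y^2\geqslant 6$: Theorem \ref{MinCB} excludes only the conic bundle alternative, and your claim that a del Pezzo surface of degree $5$ ``would have $\rho^{G/N}\geqslant 2$ (for $G/N$ trivial this is clear)'' ignores the Galois action entirely --- over a non-closed field there exist minimal del Pezzo surfaces of degree $5$ with $\rho=1$, so nothing is ``clear'' for the trivial group, and the minimality relevant to the MMP is with respect to $\Pic$ over $\ka$, i.e.\ the $\Gal(\kka/\ka)$-invariant part. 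What is actually needed is a lower bound on the invariant Picard rank so that Corollary \ref{piccrit} applies (here $\rho(\widetilde S)^{G/N}\geqslant 2$ would give $5+2\geqslant 7$); this does hold, since the two singular points of $S$ have distinct types and hence are fixed by both $G/N$ and the Galois group, so their exceptional divisors contribute invariant classes --- but this count is precisely the step your write-up omits and the paper's proof supplies.
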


\begin{proof}

Consider the group $\AG_5$ acting on $\Pro^2_{\kka}$. By a direct computation one can show that each element $g$ of $\AG_5$ is a composition of two elements $h_1$ and $h_2$ of order two. Any element of order $2$ in $\mathrm{PGL_3}\left( \ka \right)$ has a line of fixed points. Thus each element $g$ of $\AG_5$ has an isolated fixed point $p$ which is the intersection point of the lines fixed by elements $h_1$ and $h_2$. The stabilizer group $N_p$ of $p$ acts on the tangent space of $\Pro^2_{\kka}$ at $p$. Therefore $N_p$ is a subgroup in $\AG_5$ and $\mathrm{GL}_2\left( \ka \right)$. Thus this is isomorphic to $\CG_2^2$, $\SG_3$ or $\DG_{10}$ if $\ord g$ is $2$, $3$ or $5$ respectively. The images of these group in $\mathrm{GL}_2\left( \ka \right)$ are generated by reflections.

Consider such a point $p$ for an element $g$ of order $5$. The stabilizer $N_p$ of $p$ is isomorphic to $\DG_{10}$ and $g$ acts in the tangent space of $\Pro^2_{\kka}$ at $p$ as $\operatorname{diag}(\xi_5, \xi_5^4)$. One can easily check that the action of $g$ in the tangent spaces of the two other fixed points is conjugate to~$\operatorname{diag}(\xi_5, \xi_5^2)$.

The group $\AG_5$ contains six subgroups isomorphic to $\CG_5$. Let $p_1$, \ldots, $p_6$ be fixed points of these subgroups whose stabilizers are isomorphic to $\DG_{10}$. Consider an $\AG_5$-equivariant blowup $\sigma: \XX \rightarrow \Pro^2_{\kka}$ of the points $p_1$, \ldots, $p_6$. The surface $\XX$ is a del Pezzo surface of degree $3$. From Theorem~\ref{DP3groupclass} one can see that there is a unique cubic surface with $\AG_5$ action on it. This cubic surface is called Clebsch cubic. We show that the action of $\AG_5$ on the Clebsch cubic is conjugate to the action of $\AG_5$ on the blowup $\XX \rightarrow \Pro^2_{\kka}$ at the six points $p_i$.

We use the notation of Remark \ref{DP_1curves}. Let $g_2$, $g_3$ and $g_5$ be elements in $\AG_5$ of order $2$, $3$ and $5$ respectively.

The stabilizer of any point $p_i$ in $\AG_5$ is isomorphic to $\DG_{10}$. Therefore there are $5$ lines passing through the point $p_i$ that are pointewisely fixed by an element of order $2$ in $\AG_5$. But in $\AG_5$ there are only $15$ elements of order~$2$. Thus each element of order $2$ fixes pointwisely a line passing through a pair of points $p_i$ and $p_j$ on $\Pro^2_{\kka}$ and fixes pointwisely a $(-1)$-curve $L_{ij}$ on $\XX$. By the Lefschetz fixed-point formula the element $g_2$ has three isolated fixed points. Two of them are $E_i \cap Q_j$ and $Q_i \cap E_j$ and the third is the preimage of the isolated fixed point $p$ of $g_2$ on $\Pro^2_{\kka}$. In the tangent space of $\XX$ at $p$ the stabilizer group $N_p$ of $p$ acts as $\CG_2^2$ generated by reflections.

An element $g_3$ does not have any invariant $(-1)$-curve. Therefore it cannot have curves of fixed points. Thus the action of $g_3$ on $\Pro^2_{\kka}$ is conjugate to $\operatorname{diag} (1, \omega, \omega^2)$. Hence on the surface $\XX$ the element $g_3$ has three isolated points and acts in the tangent spaces of $\Pro^2_{\kka}$ at these points as $\operatorname{diag} (\omega, \omega^2)$. These points cannot be points of the blowup since the stabilizer of any point $p_i$ in $\AG_5$ is $\DG_{10}$. Therefore there are three $g_3$-fixed points on $\XX$ and in the tangent spaces of $\XX$ at these points as $\operatorname{diag} (\omega, \omega^2)$. Two of these points do not lie on $(-1)$-curves and the third one is a point of intersection of three $(-1)$-curves.

An element $g_5$ has three fixed points on $\Pro^2_{\kka}$: namely $p_k$ for some $k \in \{1, 2, 3, 4, 5, 6 \}$ and two points in whose tangent space the action of $g_5$ is conjugate to $\operatorname{diag}(\xi_5, \xi_5^2)$. The $(-1)$-curve $Q_k$ is $g_5$-invariant thus the quadric $\sigma(Q_k)$ passes through two $g_5$-fixed points different from $p_k$. Therefore the element $g_5$ has four fixed points on $\XX$, two of them lie on $E_k$ and two on~$Q_k$. The element~$g_5$ acts in the tangent spaces of $\XX$ at $g_5$-fixed points as $\operatorname{diag}(\xi_5, \xi_5^2)$.

Let $f: X \rightarrow X / N$ be the quotient morhism,
$$
\pi: \widetilde{X / N} \rightarrow X / N
$$
\noindent be the minimal resolution of singularities, and put $E = f(E_i)$, $Q = f(Q_j)$. There are four singular points on $X / N$: two singular points of type $\frac{1}{5}(1,2)$ lie on the curves $E$ and $Q$ respectively, one singular point of type $A_1$ is the intersection point $E \cap Q$ and one singular point of type $A_2$ lies neither on $E$ nor on $Q$. We have (see Table \ref{table1}):
$$
K_{\widetilde{X / N}}^2 = K_{X / N}^2 - \frac{4}{5} = \frac{1}{60}(6K_X)^2 - \frac{4}{5} = 1,
$$
$$
\rho(\widetilde{X / N})^{G / N} \geqslant \rho(X / N)^{G / N} + 4 = \rho(X)^G + 4 \geqslant 5,
$$
$$
\pi^{-1}_*(E)^2 = E^2 - \frac{1}{2} - \frac{2}{5} = \frac{1}{60}\left(\sum \limits_{i=1}^6 E_i \right)^2 - \frac{9}{10} = -1,
$$
$$
\pi^{-1}_*(Q)^2 = Q^2 - \frac{1}{2} - \frac{2}{5} = \frac{1}{60}\left(\sum \limits_{i=1}^6 Q_i \right)^2 - \frac{9}{10} = -1.
$$
Thus we can $G / N$-equivariantly contract curves $\pi^{-1}_*(E)$ and $\pi^{-1}_*(Q)$. We obtain a surface $Z$ such that $K_Z^2 = 3$ and $\rho(Z)^{G / N} \geqslant 4$. By Corollary \ref{piccrit} there exists a $G / N$-minimal model $Y$ of $Z$ such that $K_Y^2 \geqslant 6$.

\end{proof}

Now we prove Theorem \ref{DP3}.

\begin{proof}[Proof of Theorem \ref{DP3}]
We consider each case of Theorem \ref{DP3groupclass} and show that if the group $G$ is not trivial and is not conjugate to $\CG_3$ acting as in type $5$ of Table \ref{table2} then there exists a \mbox{$G / N$-MMP-reduction $Y$} of $X / N$ such that $K_Y^2 \geqslant 5$.

In case I the group $\Aut(\XX)$ is $\CG_3^3 \rtimes \SG_4$. The group $\CG_3^3$ is a diagonal subgroup of $\mathrm{PGL}_4\left( \ka \right)$ and $\SG_4$ permutes coordinates. We consider a normal subgroup $H = G \cap \CG_3^3$. If there is an element of type $3$ in this subgroup then we consider a group $N \subset H$ generated by elements of type $3$. Then the group $N$ is normal and one of the following possibilities holds:

\begin{itemize}

\item If $N$ is generated by one element of type $3$ then $N \cong \CG_3$, the quotient $X / N$ is smooth and by the Hurwitz formula one has
$$
K_{X / N}^2 = \frac{1}{3} \left( 3K_X \right)^2 = 9.
$$

\item If $N$ is generated by two elements of type $3$ then $N \cong \CG_3^2$, the quotient $X / N$ has only one singular point of type $\frac{1}{3}(1,1)$ and by the Hurwitz formula one has
$$
K_{X / N}^2 = \frac{1}{9} \left( 5K_X \right)^2 = \frac{25}{3}.
$$

\item If $N$ is generated by three elements of type $3$ then $N \cong \CG_3^3$, the quotient $X / N$ is smooth and by the Hurwitz formula one has
$$
K_{X / N}^2 = \frac{1}{27} \left( 9K_X \right)^2 = 9.
$$

\end{itemize}

For any $G / N$-MMP-reduction $Y$ of $X / N$ one has $K_Y^2 \geqslant 8$.

If the group $H$ does not contain elements of type $3$ then either $H$ is trivial or $H$ is isomorphic to $\CG_3$ generated by an element of type $4$ or $5$ or $\CG_3^2$ generated by the elements $\operatorname{diag}(1, 1, \omega, \omega)$ and $\operatorname{diag}(1, \omega, 1, \omega)$.

In the last case the group $G$ is a subgroup of $\CG_3^2 \rtimes \DG_8$ where $\DG_8 = \langle (1243), (23) \rangle$. If $G$ contains the element $(14)(23)$ then the group $N = \langle (14)(23) \rangle$ is normal in $G$ and there exists a \mbox{$G / N$-MMP-reduction $Y$} of $X / N$ such that $K_Y^2 \geqslant 5$ by Lemma \ref{DP3cyclic}. Otherwise the group $G$ is isomorphic to $\CG_3^2 \rtimes \CG_2$ or $\CG_3^2$.

If $G$ contains a normal subgroup $N \cong \CG_3$ generated by element of type $4$ then there exists a \mbox{$G / N$-MMP-reduction $Y$} of $X / N$ such that $K_Y^2 \geqslant 5$ by Lemma \ref{DP3cyclic}. Otherwise $G$ is conjugate to $\CG_3^2 \rtimes \CG_2$ where $\CG_2$ is either $\langle (23) \rangle$ or $\langle (14) \rangle$. In this case $G$ contains a normal subgroup $N \cong \CG_3$ generated by an element of type $5$. By Lemma \ref{DP3C35} the quotient $X / N$ is \mbox{$G / N$-birationally} equivalent to a del Pezzo surface $Z$ of degree $3$ and the \mbox{group $G / N \cong \SG_3$}. The quotient $Z / \SG_3$ is $\ka$-birationally equivalent to a surface $Y$ with $K_Y^2 \geqslant 5$ by Corollary~\ref{DP3O6}.

If the group $H \cong \CG_3$ is generated by an element of type $4$ then there exists a \mbox{$G / H$-MMP-reduction $Y$} of $X / H$ such that $K_Y^2 \geqslant 5$ by Lemma \ref{DP3cyclic}.

If the group $H \cong \CG_3$ is generated by an element of type $5$ then $G \subset \SG_3 \times \CG_2$ and the quotient $X / H$ is $G / H$-birationally equivalent to a del Pezzo surface $Z$ of degree $3$ by Lemma \ref{DP3C35}. Thus if $G / H$ is nontrivial then it contains a subgroup $N$ of order $2$. There exists a \mbox{$G / N$-MMP-reduction $Y$} of $X / N$ such that $K_Y^2 \geqslant 5$ by Lemma \ref{DP3cyclic}.

If the group $H$ is trivial then $G$ is a subgroup of $\SG_4$. Then the group $G$ contains a normal subgroup $N$ isomorphic to $\CG_2$, $\CG_3$ or $\VG_4$ by Lemma \ref{S5subgroups}. If $N \cong \CG_2$ or $N \cong \VG_4$ then there exists a \mbox{$G / N$-MMP-reduction $Y$} of $X / N$ such that $K_Y^2 \geqslant 5$ by Lemma \ref{DP3cyclic} or Lemma \ref{DP3V4} respectively. If $N \cong \CG_3$ then either $G$ is generated by an element of type $5$ or $G \cong \SG_3$ and there exists an \mbox{MMP-reduction $Y$} of $X / G$ such that $K_Y^2 \geqslant 5$ by Corollary~\ref{DP3O6}.

~

In case II the group $G$ contains a normal subgroup $N$ isomorphic to $\CG_2$, $\CG_3$, $\VG_4$, $\CG_5$ or $\AG_5$ by Lemma \ref{S5subgroups}. If $N$ is not isomorphic to $\CG_3$ then there exists a \mbox{$G / N$-MMP-reduction $Y$} of $X / N$ such that $K_Y^2 \geqslant 5$ by Lemma \ref{DP3cyclic}, Lemma \ref{DP3V4} or Lemma~\ref{DP3A5}. Otherwise $N \cong \CG_3$ and $G$ is a subgroup of $\SG_3 \times \CG_2$. Subgroups of this group are considered in case I.

~

In cases III and IV let us consider the group $H = G \cap H_3(3)$. If $\ord H > 3$ then $H$ contains a normal subgroup $N$ generated by an element of order $3$ acting as in type $3$ of Table \ref{table2}. The group $N$ is normal in $G$ and there exists a \mbox{$G / N$-MMP-reduction $Y$} of $X / N$ such that $K_Y^2 \geqslant 5$ by Lemma \ref{DP3cyclic}.

If $H$ is generated by an element of type $5$ and $G / H$ is not trivial then $G \cong \SG_3$ and there exists an \mbox{MMP-reduction $Y$} of $X / G$ such that $K_Y^2 \geqslant 5$ by Corollary~\ref{DP3O6}.

If $H$ is trivial and $G$ is not trivial then $G \subset \CG_4$ contains a normal subgroup $N$ of order two and there exists an \mbox{$G / N$-MMP-reduction $Y$} of $X / N$ such that $K_Y^2 \geqslant 5$ by Lemma~\ref{DP3cyclic}.

~

In case VII if $G$ is not trivial then $G$ contains a normal subgroup $N$ of order two and there exists an \mbox{$G / N$-MMP-reduction $Y$} of $X / N$ such that $K_Y^2 \geqslant 5$ by Lemma \ref{DP3cyclic}.

~

In the other cases the group $G$ is conjugate in $\mathrm{PGL_4}(\kka)$ to a subgroup of $\SG_5$. All these possibilities were considered in case II.

In all cases one has $Y(\ka) \ne \varnothing$ since $X(\ka) \ne \varnothing$. Thus
$$
Y / (G / N) \approx X / G
$$
\noindent is $\ka$-rational by Theorem \ref{ratquot4}.
\end{proof}

\section{Minimality conditions}

Let $X$ be a cubic surface in $\Pro^3_{\ka}$ and let a group $G \cong \CG_3$ act on $X$ as in type $5$ of Table~\ref{table2}. In this section we find some conditions for the action of the Galois group $\Gal \left(\kka / \ka \right)$ on the set of $(-1)$-curves under which the surface $X$ is $\ka$-rational and $X / G$ is not $\ka$-rational.

Throughout this section we use the notation of Remark \ref{DP_1curves}. Let $\Gamma$ be the image of the Galois group $\Gal\left(\kka / \ka\right)$ in the Weyl group $W(E_6)$ acting on $\Pic(\XX)$  (see \cite[\S IV.3]{Man74}). The group $\Gamma$ effectively acts on the set of $(-1)$-curves on $X$. The group $W(E_6)$ contains a subgroup $\SG_6$ acting in the following way: for $\sigma \in \SG_6$ one has
$$
\sigma E_i = E_{\sigma(i)}, \qquad \sigma L_{ij} = L_{\sigma(i)\sigma(j)}, \qquad \sigma Q_i = Q_{\sigma(i)}.
$$

\begin{lemma}
\label{PicC35}
The image of the group $G$ in the Weyl group~$W(E_6)$ is conjugate to~$\langle (123)(456) \rangle$.
\end{lemma}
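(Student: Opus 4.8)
The group $G \cong \CG_3$ acts on $X$ as in type $5$ of Table \ref{table2}, so the image $g_\ast \in W(E_6)$ of a generator $g$ of $G$ is an element of order $3$ acting on $\Pic(\XX)$, and the statement is equivalent to identifying its conjugacy class. The plan is to pin this class down by computing the trace of $g_\ast$ on $\Pic(\XX) \otimes \CC$ and then matching the resulting invariants with the permutation $(123)(456) \in \SG_6 \subset W(E_6)$.

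First I would record the fixed-point data of $g$. Exactly as in the proof of Lemma \ref{DP3C35}, the fixed points of $g$ on $\XX$ are the three points $q_1, q_2, q_3$ on the line $z = t = 0$, and at each of them $g$ acts on the tangent space as $\operatorname{diag}(\omega, \omega^2)$. In particular $g$ has no eigenvalue $1$ on any of these tangent spaces, so the three fixed points are isolated and reduced and $g$ has no curve of fixed points. Thus $\operatorname{Fix}(g)$ consists of exactly three points. I would then apply the topological Lefschetz fixed-point formula to $g$ (after embedding into $\CC$ a finitely generated field of definition of $\XX$ and $g$, which exists since $\Char \ka = 0$). As $\XX$ is a smooth rational surface, $g$ acts trivially on $H^0 = H^4 = \CC$, one has $H^1 = H^3 = 0$, and $H^2(\XX,\CC) \cong \Pic(\XX)\otimes\CC$ has rank $7$; hence
$$
3 = \#\operatorname{Fix}(g) = \sum_{i} (-1)^i \operatorname{tr}\left(g^\ast \mid H^i(\XX,\CC)\right) = 2 + \operatorname{tr}\left(g_\ast \mid \Pic(\XX)\otimes\CC\right),
$$
so $\operatorname{tr}(g_\ast) = 1$. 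Since $g_\ast$ has order $3$ and acts by an integral matrix fixing $-K_{\XX}$, its eigenvalues lie in $\{1,\omega,\omega^2\}$ with $\omega$ and $\omega^2$ occurring with a common multiplicity $b$ and $1$ with multiplicity $a$; from $a + 2b = 7$ and $a - b = 1$ I get $a = 3$, $b = 2$. Therefore the characteristic polynomial of $g_\ast$ is $(x-1)^3(x^2+x+1)^2$ and $\rho(\XX)^{\langle g \rangle} = 3$.

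Finally I would compare this with $\sigma = (123)(456)$. Acting on $\Pic(\XX) = \langle L, E_1, \dots, E_6\rangle$ by $\sigma E_i = E_{\sigma(i)}$, the classes $L$, $E_1 + E_2 + E_3$ and $E_4 + E_5 + E_6$ span the fixed sublattice, so $\sigma$ has the same characteristic polynomial $(x-1)^3(x^2+x+1)^2$, fixed rank $3$, and no $(-1)$-curve fixed pointwise. By Theorem \ref{DP3cyclicclass} the three order-$3$ types of Table \ref{table2} realize three \emph{distinct} conjugacy classes of cyclic subgroups of $W(E_6)$, and these exhaust the order-$3$ classes; since each is determined by its characteristic polynomial (equivalently by the value $\rho(\XX)^{\langle\cdot\rangle} \in \{5,3,1\}$), the elements $g_\ast$ and $\sigma$ lie in the same class. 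Consequently $\langle g_\ast\rangle$ is conjugate to $\langle(123)(456)\rangle$ in $W(E_6)$.

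The step I expect to be the main obstacle is this last one: passing from equality of characteristic polynomials to genuine conjugacy in $W(E_6)$. This rests on the fact that an order-$3$ element of $W(E_6)$ is determined up to conjugacy by its characteristic polynomial — which follows from the classification of conjugacy classes in Weyl groups (the classes $A_2$, $2A_2$, $3A_2$) and is consistent with Theorem \ref{DP3cyclicclass}. An alternative that bypasses this input is to exhibit a $g$-invariant set of six pairwise disjoint $(-1)$-curves on which $g$ acts as two disjoint $3$-cycles, whose contraction is a $g$-equivariant morphism $\XX \to \Pro^2_{\kka}$; this realizes $g_\ast$ directly as $(123)(456)$, at the cost of identifying those six curves explicitly from the equation of type $5$.
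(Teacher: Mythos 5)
Your proof is correct, but it takes a genuinely different route from the paper's. The paper never computes a trace: it first uses a Sylow argument ($|W(E_6)| = 2^7\cdot 3^4\cdot 5$, so all subgroups of order $81$ are conjugate, and one of them acts on the Fermat cubic) to conclude that \emph{every} order-$3$ element of $W(E_6)$ is of type $3$, $4$ or $5$ of Table \ref{table2}; it then identifies $(123)(456)$ as type $5$ because $\rho(\XX)^{\langle (123)(456)\rangle} > 1$ rules out type $3$ (which has $\rho = 1$ by Remark \ref{DP3type3}), and the absence of $(123)(456)$-invariant $(-1)$-curves rules out type $4$ (which has the invariant lines $C_{ij}$). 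You instead extract the full characteristic polynomial $(x-1)^3(x^2+x+1)^2$ from the Lefschetz count of the three isolated fixed points, match it against $(123)(456)$, and conclude via the classification of order-$3$ classes in $W(E_6)$ (the classes $A_2$, $2A_2$, $3A_2$, distinguished by their characteristic polynomials, equivalently by $\rho \in \{5,3,1\}$). The step you single out is indeed the only point needing care: Theorem \ref{DP3cyclicclass} by itself does not say that types $3$--$5$ exhaust the order-$3$ classes of $W(E_6)$ --- that exhaustion is precisely what the paper's Sylow argument supplies, and what you replace by the external Carter/Manin classification. Since the paper itself sanctions exactly this input in Remark \ref{Man} (via Table 1 of Manin's book), your argument is a legitimate variant; it buys a sharper numerical conclusion ($\rho(\XX)^{G} = 3$ exactly, rather than merely $>1$) and a cleaner invariant, at the cost of importing the conjugacy-class table, whereas the paper's main proof stays entirely inside the geometric data of Table \ref{table2}.
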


\begin{proof}

The order of the Weyl group $W(E_6)$ is equal to $51840 = 2^7 \cdot 3^4 \cdot 5$. By Sylow theorem all groups of order $81$ are conjugate in $W(E_6)$. The group of order $81$ acts on the Fermat cubic (see Table \ref{table3})
$$
x^3 + y^3 + z^3 + t^3 = 0.
$$
Thus any element of order $3$ in $W(E_6)$ has type $3$, $4$ or $5$ of Table \ref{table2}.

For any element $g$ of type $3$ of Table~\ref{table2} one has $\rho(\XX)^{\langle g \rangle} = 1$ by Remark \ref{DP3type3}. In the subgroup $\SG_6 \subset W(E_6)$ there is an element $(123)(456)$ which has type~$4$ or $5$ of Table~\ref{table2} since
$$
\rho(\XX)^{\langle (123)(456) \rangle} > 1.
$$
\noindent But an element of type $4$ has invariant $(-1)$-curves (see the proof of Lemma \ref{DP3cyclic}) and the element $(123)(456)$ does not have invariant $(-1)$-curves. Therefore the action of the group $G$ is conjugate to $\langle (123)(456) \rangle$ in the group $W(E_6)$.

\end{proof}

\begin{remark}
\label{Man}

An alternative proof of Lemma \ref{PicC35} is the following. One can look at Table $1$ in \cite[Chapter IV, $\S5$]{Man74} and see that conjugacy classes of elements of order $3$ in the group $W(E_6)$ correspond to the rows $3$, $18$ and $22$ of this table. But in the eighth column of the table one can see that the element $g$ correspond to the $18$-th row. In the ninth column one can see that $g$ is conjugate to~$(123)(456)$ in $W(E_6)$. Also one can see that the order of the centralizer of~$G$ in $W(E_6)$ is $108$.

\end{remark}

From now on we can assume that the group $G$ acts on the set of $(-1)$-curves on $X$ as~$\langle (123)(456) \rangle$. The Galois group $\Gal \left( \kka / \ka \right)$ commutes with the group $G$. Therefore to describe possibilities for the group $\Gamma$ we should find the centralizer of $G$ in $W(E_6)$.

\begin{lemma}
\label{S6center}
The centralizer of $G = \langle (123)(456) \rangle$ in $\SG_6$ is a subgroup $\CG_3^2 \rtimes \CG_2$ generated by $a = (123)$, $b = (456)$ and $c = (14)(25)(36)$.
\end{lemma}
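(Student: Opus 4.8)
The plan is to determine the centralizer purely from the cycle type of $g = (123)(456)$ and then match it against the subgroup $\langle a, b, c \rangle$ by a counting argument. First I would recall the standard formula for the order of a centralizer in a symmetric group: if an element of $\SG_n$ has cycle type consisting of $\lambda_i$ cycles of length $i$, then its centralizer has order $\prod_i i^{\lambda_i}\,\lambda_i!$. The element $g$ has cycle type $3^2$ (two disjoint $3$-cycles and no fixed points), so the centralizer of $G = \langle g \rangle$ in $\SG_6$ has order
$$
3^2 \cdot 2! = 18.
$$

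Next I would verify that each of the three proposed generators commutes with $g$. The cycles $a = (123)$ and $b = (456)$ commute with $g$: indeed $a$ is one of the two cycles composing $g$ and is disjoint from the other cycle $(456)$, so $a$ commutes with both factors of $g$, and symmetrically for $b$. For $c = (14)(25)(36)$ the point is that conjugation by $c$ relabels the entries of a cycle by applying $c$; since $c$ sends $1,2,3$ to $4,5,6$ and back, it interchanges the two cycles of $g$, giving $c\,g\,c^{-1} = (456)(123) = g$. Hence $a$, $b$ and $c$ all lie in the centralizer of $G$. This conjugation check for $c$ is the only computation in the argument requiring care, and I would expect it to be the main (though minor) point to get right.

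Finally I would identify the structure of $H = \langle a, b, c \rangle$ and close by an order count. Since $a$ and $b$ are disjoint commuting $3$-cycles, $\langle a, b \rangle \cong \CG_3^2$; the element $c$ has order $2$, and the computation above shows $c\,a\,c^{-1} = b$ and $c\,b\,c^{-1} = a$, so conjugation by $c$ swaps the two $\CG_3$ factors. Thus $H = \langle a, b \rangle \rtimes \langle c \rangle \cong \CG_3^2 \rtimes \CG_2$ is the nonabelian group of order $18$ in which $\CG_2$ permutes the two cyclic factors. Since $H$ is contained in the centralizer of $G$ and both have order $18$, they coincide, which proves the lemma.
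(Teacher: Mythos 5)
Your proof is correct and follows essentially the same route as the paper: both establish that the centralizer has order $18$ (the paper by counting the $40$ conjugates of $(123)(456)$ and applying orbit--stabilizer, you by the equivalent cycle-type formula $\prod_i i^{\lambda_i}\lambda_i!$), then exhibit $\langle a,b,c\rangle$ as a subgroup of the centralizer of that order. Your explicit verification that $c$ centralizes $g$ and swaps $a$ and $b$ only fills in details the paper labels as obvious.
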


\begin{proof}
Note that in the group $\SG_6$ there are
$$
\frac{6!}{3! \cdot 3! \cdot 2} \cdot 4 = 40
$$
\noindent elements conjugate to $(123)(456)$. Therefore the order of the centralizer of \mbox{$G = \langle (123)(456) \rangle$} is equal to $18$.

The elements $a$, $b$ and $c$ obviously commute with the element $(123)(456)$ and the group $\CG_3^2 \rtimes \CG_2 = \langle a, b, c \rangle$ has order $18$. Thus the centralizer of $G = \langle (123)(456) \rangle$ in $\SG_6$ is a subgroup $\CG_3^2 \rtimes \CG_2 = \langle a, b, c \rangle$.

\end{proof}

Note that the group $G$ has exactly three orbits that consist of \mbox{$(-1)$-curves} meeting each other: $\{ L_{14}, L_{25}, L_{36} \}$, $\{L_{15}, L_{26}, L_{34}\}$ and $\{L_{16}, L_{24}, L_{35}\}$. The other orbits consist of disjoint $(-1)$-curves. Therefore the set of nine $(-1)$-curves
$$
L_{ij},\,\, \text{where}\,\, i \in \{1,2,3\}\,\, \text{and} \,\,j \in \{4,5,6\}
$$
\noindent is invariant under the action of centralizer of $G$ in $W(E_6)$. The group
$$
\CG_3^2 \rtimes \CG_2 \cong \langle a, b, c \rangle
$$
\noindent can realize any permutation of this set of $(-1)$-curves that preserves the intersection form. Therefore to find the centralizer of $G$ in $W(E_6)$ we should find a subgroup in $W(E_6)$ acting trivially on the set of nine $(-1)$-curves $L_{ij}$, where $i \in \{1,2,3\}$ and $j \in \{4,5,6\}$.

\begin{lemma}
\label{W6fixL}
The subgroup of $W(E_6)$ fixing each of the nine $(-1)$-curves $L_{ij}$, where $i \in \{1,2,3\}$ and $j \in \{4,5,6\}$, is a group $\SG_3$ generated by elements $r$ and $s$ of order $3$ and $2$ respectively acting on the set of $(-1)$-curves in the following way:
$$
s\left(E_i\right) = Q_i, \qquad s\left(Q_i\right) = E_i, \qquad s\left(L_{ij}\right) = L_{ij},
$$
$$
r\left(E_i\right) = Q_i\,\, \text{if}\,\, i \in \{1,2,3\}, \qquad r^2\left(E_i\right) = Q_i\,\, \text{if}\,\, i \in \{4,5,6\},
$$
$$
r\left(E_i\right) = L_{jk}\,\, \text{if}\,\, i \in \{4,5,6\}\,\, \text{and}\,\, j,k \in \{4,5,6\}\,\, \text{differ from}\,\, i,
$$
$$
r^2\left(E_i\right) = L_{jk}\,\, \text{if}\,\, i \in \{1,2,3\}\,\, \text{and}\,\, j,k \in \{1,2,3\}\,\, \text{differ from}\,\, i.
$$
\end{lemma}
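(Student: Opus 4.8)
The plan is to work entirely with the configuration of the $27$ lines on $\XX$ described in Remark~\ref{DP_1curves}, using the standard identification of $W(E_6)$ with the group of permutations of these $27$ lines preserving all intersection numbers (equivalently, the isometries of $\Pic(\XX)$ fixing $-K_{\XX}$). Under this identification, an element of $W(E_6)$ fixing each of the nine curves $L_{ij}$ with $i \in \{1,2,3\}$, $j \in \{4,5,6\}$ is simply a permutation $\phi$ of the $27$ lines that preserves intersections and fixes these nine; I would denote the group of all such $\phi$ by $H$. I would organise the nine fixed curves as a $3 \times 3$ grid, with "rows" indexed by $\{1,2,3\}$ and "columns" by $\{4,5,6\}$; from the intersection table of Remark~\ref{DP_1curves} one reads off $L_{ij} \cdot L_{i'j'} = 0$ exactly when $i = i'$ or $j = j'$, i.e. when the two curves share a row or a column.

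First I would classify the remaining $18$ lines by their incidences with the nine. A direct reading of the table shows that each of $E_i$, $Q_i$ (for $i \in \{1,2,3\}$) meets exactly the $i$-th row, while $L_{jk}$ with $\{j,k\} \subset \{1,2,3\}$ meets exactly the row indexed by the remaining element of $\{1,2,3\}$; symmetrically for the columns. This partitions the $18$ lines into six triples, one per row and one per column, for instance $\{E_1, Q_1, L_{23}\}$ for the first row and $\{E_4, Q_4, L_{56}\}$ for the first column, and each triple consists of three pairwise disjoint lines. Since $\phi$ fixes the nine $L_{ij}$ and preserves intersections, it preserves each row and each column, hence permutes each of the six triples.

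Next I would pin $\phi$ down from its action on a single triple $T = \{E_1, Q_1, L_{23}\}$. Comparing $T$ with the second-row triple $\{E_2, Q_2, L_{13}\}$, the relation "intersection number $0$" is the perfect matching $E_1 \leftrightarrow E_2$, $Q_1 \leftrightarrow Q_2$, $L_{23} \leftrightarrow L_{13}$; comparing $T$ with the first-column triple $\{E_4, Q_4, L_{56}\}$, the relation "intersection number $1$" is the matching $E_1 \leftrightarrow Q_4$, $Q_1 \leftrightarrow E_4$, $L_{23} \leftrightarrow L_{56}$. By the symmetry of the construction each of the other four triples is matched with $T$ in the same bijective way, and since $\phi$ preserves intersection numbers it must respect every such matching. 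Consequently $\phi$ is completely determined by its restriction $\phi|_T$, and the restriction $\phi \mapsto \phi|_T$ is an injective homomorphism $H \hookrightarrow \mathrm{Sym}(T) \cong \SG_3$; in particular $\lvert H \rvert \leqslant 6$. Finally I would check that the two permutations $r$ and $s$ of the statement preserve all intersection numbers and fix $-K_{\XX}$ (so that they lie in $W(E_6)$) and fix the nine $L_{ij}$; since their restrictions to $T$ are the $3$-cycle $(E_1\, Q_1\, L_{23})$ and the transposition $(E_1\, Q_1)$, they generate all of $\mathrm{Sym}(T)$, whence $H = \langle r, s \rangle \cong \SG_3$.

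The bulk of the work, and the only place where a genuine computation is unavoidable, is the incidence bookkeeping: confirming that the $18$ lines split into the six triples exactly as claimed, that the cross-triple relations are perfect matchings so that $\phi$ is rigidly propagated from $T$ to every triple, and that the explicit maps $r$ and $s$ are isometries of $\Pic(\XX)$. For $s$ this reduces to observing that $s(E_i) = Q_i$ together with $s(-K_{\XX}) = -K_{\XX}$ forces $s(L) = 5L - 2\sum_i E_i$, and that this value is compatible with $s(L_{ij}) = L_{ij}$ and yields $s(Q_i) = E_i$; the analogous verification for $r$ is routine but slightly longer. None of these steps is deep, but the matching argument must be carried out for all six triples in order to conclude that the homomorphism $H \to \SG_3$ is injective, and this is the step I expect to demand the most care.
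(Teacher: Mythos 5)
Your proposal is correct and follows essentially the same route as the paper: both reduce the problem to the action on the triple $\{E_1, Q_1, L_{23}\}$ of curves having the same intersection pattern with the nine fixed $L_{ij}$, observe that the restriction to this triple determines the whole permutation of the $27$ lines, and note that $r$ and $s$ realize all six permutations of that triple. Your six-triples/perfect-matching bookkeeping merely spells out the propagation step that the paper asserts in one sentence without detail.
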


\begin{proof}
Let us consider the $(-1)$-curve $E_1$. Since $L_{14}$, $L_{15}$ and $L_{16}$ are invariant, the image of $E_1$ can be only $E_1$, $L_{23}$ or $Q_1$. The action of the group on these three $(-1)$-curves defines the whole action of the group $\langle r, s \rangle$ on the set of $(-1)$-curves. The group $\SG_3 = \langle r, s \rangle$ fixes all the nine $(-1)$-curves $L_{ij}$, where $i \in \{1,2,3\}$ and $j \in \{4,5,6\}$, and permutes $E_1$, $L_{23}$ and $Q_1$ in all possible ways.
\end{proof}

\begin{proposition}
\label{W6center}
The centralizer of $G = \langle (123)(456) \rangle$ in $W(E_6)$ is a subgroup
$$
H \cong \left( \CG_3^2 \rtimes \CG_2 \right) \times \SG_3
$$
\noindent where the first factor is generated by $a$, $b$ and $cs$, and the second factor is generated by $r$ and $s$.
\end{proposition}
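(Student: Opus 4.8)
The plan is to study the action of $H$, the centralizer of $G=\langle g\rangle$ with $g=(123)(456)$, on the distinguished set $\mathcal{L}$ of nine $(-1)$-curves $L_{ij}$ with $i\in\{1,2,3\}$ and $j\in\{4,5,6\}$. As noted before Lemma~\ref{W6fixL}, $\mathcal{L}$ is $H$-invariant, so restriction gives a homomorphism $\phi\colon H\to\mathrm{Sym}(\mathcal{L})$ whose image preserves the intersection pairing. First I would identify $\ker\phi$. A direct check on the orbits of the $(-1)$-curves shows that $r$ and $s$ commute with $g$ (for instance $sg(E_i)=Q_{g(i)}=gs(E_i)$, and similarly for $r$), so $\langle r,s\rangle\subseteq H$; since by construction $r,s$ fix every curve in $\mathcal{L}$, we have $\langle r,s\rangle\subseteq\ker\phi$. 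Conversely Lemma~\ref{W6fixL} says that $\langle r,s\rangle$ is the full subgroup of $W(E_6)$ fixing each element of $\mathcal{L}$, so $\ker\phi=\langle r,s\rangle\cong\SG_3$.

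Next I would compute $\phi(H)$. Because every element of $H$ commutes with $g$, its image commutes with $\phi(g)$; thus $\phi(H)$ is contained in the group of intersection-preserving permutations of $\mathcal{L}$ commuting with the $G$-action, and by the discussion preceding Lemma~\ref{W6fixL} each such permutation is realized by an element of $\langle a,b,c\rangle\subseteq H$ (Lemma~\ref{S6center}). Hence $\phi(H)=\phi(\langle a,b,c\rangle)$. Since any $\sigma\in\SG_6$ fixing all $L_{ij}$ with $i\in\{1,2,3\}$ and $j\in\{4,5,6\}$ must be trivial (taking $i=1$ with $j=4,5$ already forces $\sigma(1)=1$, and so on), $\langle a,b,c\rangle$ acts faithfully on $\mathcal{L}$, so $|\phi(H)|=18$ and $|H|=|\ker\phi|\cdot|\phi(H)|=6\cdot 18=108$, in agreement with Remark~\ref{Man}.

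It remains to exhibit the direct factor complementary to $\langle r,s\rangle$. I would replace $c$ by $cs$: one checks that $s$ and $c$ commute, and as both are involutions $cs$ is again an involution; moreover, since $s$ centralizes $a,b,c$ while $cac^{-1}=b$ and $cbc^{-1}=a$ in $\SG_6$, conjugation by $cs$ interchanges $a$ and $b$. As $a,b$ are commuting elements of order $3$, this yields $\langle a,b,cs\rangle\cong\CG_3^2\rtimes\CG_2$ of order $18$. The essential computation is to verify, curve by curve on the six length-three orbits of $r$, that $a$, $b$ and $cs$ all commute with both $r$ and $s$; the delicate point is that $c$ itself fails to commute with $r$ (e.g. $rc(E_1)=L_{56}$ while $cr(E_1)=Q_4$), whereas the twisted element $d=cs$ does commute with $r$ (both $rd(E_1)$ and $dr(E_1)$ equal $E_4$). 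I expect this orbit-by-orbit verification to be the main obstacle: it is finite but must be carried out carefully, and it is exactly what forces the use of $cs$ rather than $c$.

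Finally I would assemble the decomposition. Since $\phi$ restricted to $\langle a,b,cs\rangle$ has image of order $18$ equal to the order of the group, $\phi$ is injective there, whence $\langle a,b,cs\rangle\cap\ker\phi=\langle a,b,cs\rangle\cap\langle r,s\rangle=1$. The subgroup $\langle a,b,cs\rangle$ centralizes $\langle r,s\rangle$ by the previous step, and $|\langle a,b,cs\rangle|\cdot|\langle r,s\rangle|=18\cdot 6=108=|H|$, so $\langle a,b,cs\rangle\langle r,s\rangle=H$. Two mutually centralizing subgroups with trivial intersection that together generate $H$ are each normal and form an internal direct product, giving
$$
H=\langle a,b,cs\rangle\times\langle r,s\rangle\cong\left(\CG_3^2\rtimes\CG_2\right)\times\SG_3,
$$
with the factors generated by $a,b,cs$ and by $r,s$ respectively, as claimed.
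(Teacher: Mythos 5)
Your proof is correct and follows essentially the same route as the paper: both reduce the computation to Lemmas \ref{S6center} and \ref{W6fixL} via the $H$-invariant set of nine curves $L_{ij}$, and both conclude by replacing $c$ with $cs$ and checking that $a$, $b$, $cs$ commute with $r$, $s$. You merely make explicit what the paper compresses into ``one can easily check'' (the kernel/image count giving $|H|=108$, the faithfulness of $\langle a,b,c\rangle$ on the nine curves, and the sample verifications that $c$ fails to commute with $r$ while $cs$ does), and you correctly tighten the paper's slightly loose claim that $\langle a,b,c\rangle$ realizes \emph{every} intersection-preserving permutation of the nine curves to the needed statement about permutations commuting with the $G$-action.
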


\begin{proof}
By Lemmas \ref{S6center} and \ref{W6fixL} the centralizer of $G$ in $W(E_6)$ is generated by the subgroups \mbox{$\CG_3^2 \rtimes \CG_2 = \langle a, b, c \rangle$} and $\SG_3 = \langle r, s \rangle$. Obviously, the elements $a$, $b$, $cs$, $r$ and $s$ generate this group. One can easily check that $a$, $b$, $cs$ and $r$, $s$ pairwisely commute.
\end{proof}

By Remark \ref{DP3C35rat} if the quotient of $X$ is not $\ka$-rational then $\rho(X)^G = 1$. Moreover, if $\rho(X) = 1$ then $X$ is not $\ka$-rational by Theorem \ref{ratcrit}. Thus to construct non-$\ka$-rational quotient of $\ka$-rational cubic surface we should find all possibilities of the Galois group $\Gamma$ such that $\rho(X) > 1$ and $\rho(X)^G = 1$.

The group $\Gamma$ is a subgroup of the group $H \cong \left( \CG_3^2 \rtimes \CG_2 \right) \times \SG_3$ where the first factor is generated by $a$, $b$ and $cs$, and the second factor is generated by $r$ and $s$. We denote the projection on the first factor
$$
H \cong \left( \CG_3^2 \rtimes \CG_2 \right) \times \SG_3 \twoheadrightarrow \CG_3^2 \rtimes \CG_2
$$
\noindent by $\pi_1$, and the projection on the second factor
$$
H \cong \left( \CG_3^2 \rtimes \CG_2 \right) \times \SG_3 \twoheadrightarrow \SG_3
$$
\noindent by $\pi_2$.

\begin{lemma}
\label{Galnonmin1}
If $\pi_2(\Gamma)$ is trivial and $\pi_1(\Gamma) \subset \CG_3^2$ or $\pi_2(\Gamma) = \langle s \rangle$ but $s \notin \Gamma$ then $\rho(X)^G > 1$.
\end{lemma}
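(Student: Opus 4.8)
The plan is to reduce the statement to a two-dimensional linear algebra computation. Since $\Gamma$ lies in the centralizer $H$ of $G$, it preserves the rational vector space $V = \left( \Pic(\XX) \otimes \Q \right)^G$, and by Galois descent one has $\rho(X)^G = \dim_{\Q} V^{\Gamma}$. The element $(123)(456)$ permutes $E_1, E_2, E_3$ and $E_4, E_5, E_6$ cyclically and fixes $L$, so $V$ is spanned by $L$, by $E_1 + E_2 + E_3$ and by $E_4 + E_5 + E_6$, i.e. $\dim V = 3$. Because $-K_{\XX}$ is fixed by all of $W(E_6)$ and $\left( -K_{\XX} \right)^2 = 3 \ne 0$, the intersection form gives an $H$-invariant splitting $V = \Q \cdot \left( -K_{\XX} \right) \oplus W$, where $W = \left( -K_{\XX} \right)^{\perp} \cap V$ is a two-dimensional $H$-invariant plane. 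Hence $\rho(X)^G = 1 + \dim W^{\Gamma}$, and it suffices to prove that $W^{\Gamma} \ne 0$ in both cases.

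First I would write down the action of the generators of $H$ on $W$ in the basis $w_1 = \left( E_1 + E_2 + E_3 \right) - \left( E_4 + E_5 + E_6 \right)$ and $w_2 = 2L - \sum_{i=1}^6 E_i$. The generators $a$ and $b$ act trivially on all of $V$; the involutions $c$ and $s$ act on $W$ as the commuting reflections $\operatorname{diag}(-1,1)$ and $\operatorname{diag}(1,-1)$, so that their product $cs$ acts as $-\operatorname{id}$; and $r$ acts as an order-three rotation with no invariant line. Thus the first factor $\CG_3^2 \rtimes \CG_2 = \langle a, b, cs \rangle$ acts on $W$ through the sign character $\epsilon$ which is trivial on $\CG_3^2 = \langle a, b \rangle$ and equals $-\operatorname{id}$ on the nontrivial coset, while the second factor $\SG_3 = \langle r, s \rangle$ acts through its standard two-dimensional representation.

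In the first case, where $\pi_2(\Gamma)$ is trivial and $\pi_1(\Gamma) \subset \CG_3^2$, the group $\Gamma$ is contained in $\langle a, b \rangle$ and therefore acts trivially on $V$, so $V^{\Gamma} = V$ and $\rho(X)^G = 3$. In the second case, where $\pi_2(\Gamma) = \langle s \rangle$ but $s \notin \Gamma$, every element of $\Gamma$ has second component $1$ or $s$ and so acts on $W$ by one of the diagonal matrices $\pm\operatorname{diag}(1,-1)$ or $\pm\operatorname{id}$. I would then show that $\Gamma$ fixes the line $\Q w_2$ in three steps: (i) $\Gamma$ contains no element of the form $\left( a^i b^j, s \right)$ with $a^i b^j \in \CG_3^2$, for cubing such an element (note $\left( a^i b^j \right)^3 = 1$) yields $s$, contradicting $s \notin \Gamma$; (ii) consequently every element of $\Gamma$ projecting onto $s$ has $\epsilon = -1$; and (iii) $\Gamma$ contains no element $(g, 1)$ with $\epsilon(g) = -1$, since multiplying such an element by one projecting onto $s$ (which exists because $\pi_2(\Gamma) = \langle s \rangle$) makes the two order-two parts cancel, by the swap relation $(cs)\, a^m b^n (cs)^{-1} = a^n b^m$, and produces an element of the type forbidden in (i). Steps (ii) and (iii) show that $\Gamma$ acts on $W$ only through $\{ \operatorname{id}, \operatorname{diag}(-1,1) \}$, which fixes $w_2$, so $W^{\Gamma} \ne 0$ and $\rho(X)^G \geq 2$.

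The main obstacle is step (iii) of the second case. The two hypotheses $\pi_2(\Gamma) = \langle s \rangle$ and $s \notin \Gamma$ do not visibly forbid $\Gamma$ from containing an element acting as $-\operatorname{id}$ on $W$, and such an element together with one acting as $\operatorname{diag}(-1,1)$ would force $W^{\Gamma} = 0$; the whole content of the lemma is that the non-splitting condition $s \notin \Gamma$ excludes this. Making precise the cancellation of the order-two parts through the relation $(cs)\, a^m b^n (cs)^{-1} = a^n b^m$, and thereby reducing step (iii) to the already established step (i), is the heart of the argument; everything else is the bookkeeping of the characters $\epsilon$ and $\pi_2$ set up above.
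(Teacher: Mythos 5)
Your proof is correct and follows essentially the same route as the paper: the paper simply observes that in both cases $\Gamma$ is contained in $\langle a, b, c \rangle$, so that $\Gamma$ and $G$ preserve $\sum E_i$ (your invariant $w_2 = 2L - \sum_{i=1}^6 E_i$ is the same class modulo $\Q K_X$). Your steps (i)--(iii) merely spell out the verification, left implicit in the paper, that the hypotheses $\pi_2(\Gamma) = \langle s \rangle$ and $s \notin \Gamma$ force $\Gamma \subseteq \langle a, b, c \rangle$.
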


\begin{proof}
In these cases the group $\Gamma$ is a subgroup of the group $\langle a, b, c \rangle$. Therefore the groups~$\Gamma$ and $G \cong \CG_3$ preserve $\sum \limits_{i=1}^6 E_i$ and $\rho(X)^G > 1$.
\end{proof}

\begin{lemma}
\label{Galnonmin2}
If $\pi_2(\Gamma) = \langle s \rangle$ and $\pi_1(\Gamma) \subset \CG_3^2$ then $\rho(X)^G > 1$.
\end{lemma}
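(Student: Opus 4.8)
The plan is to translate the statement into a computation inside $\Pic(\XX)$: since $\Gamma$ and $G$ commute, one has $\rho(X)^G = \operatorname{rk} \Pic(\XX)^{\langle \Gamma, G \rangle}$, and it suffices to exhibit in this invariant sublattice one class that is linearly independent of $K_{\XX}$, which immediately forces $\rho(X)^G \geqslant 2 > 1$. First I would record what the hypotheses give: $\pi_2(\Gamma) = \langle s \rangle$ and $\pi_1(\Gamma) \subset \CG_3^2$ mean, since any subgroup of a direct product sits inside the product of its two projections, that $\Gamma \subset \CG_3^2 \times \langle s \rangle$, where $\CG_3^2 = \langle a, b \rangle = \langle (123), (456) \rangle$. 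By Lemma \ref{PicC35} the image of $G$ in $W(E_6)$ is $(123)(456) = ab$, which already lies in this $\CG_3^2$. Hence $\langle \Gamma, G \rangle \subset \CG_3^2 \times \langle s \rangle$, and it is enough to find a class in $\Pic(\XX)^{\CG_3^2 \times \langle s \rangle}$ not proportional to $K_{\XX}$.

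The class I would propose is
$$
D = (E_1 + E_2 + E_3) - (E_4 + E_5 + E_6).
$$
It is manifestly invariant under $\CG_3^2$, because $a = (123)$ and $b = (456)$ permute the curves $E_i$ within the two blocks $\{1, 2, 3\}$ and $\{4, 5, 6\}$ separately, fixing each block-sum. The one genuine computation is to check that $s$ fixes $D$. Using $s(E_i) = Q_i$ from Lemma \ref{W6fixL} together with $Q_i \sim 2L - \sum_{j \ne i} E_j$ from Remark \ref{DP_1curves}, one obtains
$$
\sum_{i=1}^3 Q_i = 6L - 2\sum_{i=1}^3 E_i - 3\sum_{i=4}^6 E_i, \qquad \sum_{i=4}^6 Q_i = 6L - 3\sum_{i=1}^3 E_i - 2\sum_{i=4}^6 E_i,
$$
so that $s(D) = \sum_{i=1}^3 Q_i - \sum_{i=4}^6 Q_i = D$. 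Therefore $D$ is fixed by every element of $\CG_3^2 \times \langle s \rangle$, and in particular $D \in \Pic(\XX)^{\langle \Gamma, G \rangle}$.

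To finish, I would note that $K_{\XX} \sim -3L + \sum_{i=1}^6 E_i$ involves $L$ whereas $D$ does not, so $D$ and $K_{\XX}$ are linearly independent classes, both lying in $\Pic(\XX)^{\langle \Gamma, G \rangle}$. Consequently $\rho(X)^G = \operatorname{rk} \Pic(\XX)^{\langle \Gamma, G \rangle} \geqslant 2 > 1$, which is the assertion of the lemma.

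The main obstacle is the choice of $D$ rather than the verification. The combination $\sum_{i=1}^6 E_i$ used in Lemma \ref{Galnonmin1} is $\CG_3^2$- and $G$-invariant but is \emph{not} fixed by $s$; its $s$-symmetrization $\sum E_i + \sum Q_i$ works out to $-4K_{\XX}$ and so contributes nothing beyond the canonical class. The key point is that one must instead use the antisymmetric combination of the two block-sums, and the heart of the argument is exactly the short computation above showing that this antisymmetric class survives $s$.
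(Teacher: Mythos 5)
Your proof is correct and is essentially the paper's own argument: the paper likewise exhibits the divisor $\sum_{i=1}^3 E_i - \sum_{i=4}^6 E_i$ as invariant under $\Gamma$ and $G$ and concludes $\rho(X)^G > 1$, merely omitting the explicit verification that $s$ fixes it. Your computation with $s(E_i) = Q_i$ and the linear independence from $K_{\XX}$ correctly fills in those details.
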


\begin{proof}
In this case the groups $\Gamma$ and $G$ preserve the divisor $\sum \limits_{i=1}^3 E_i - \sum \limits_{i=4}^6 E_i$. Therefore one has $\rho(X)^G > 1$.
\end{proof}

\begin{lemma}
\label{Galmin1}
One has $\rho(X)^{\langle abcs \rangle} = \rho(X)^{\langle a^2b, cs \rangle} = 1$.
\end{lemma}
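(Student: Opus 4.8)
The plan is to compute both invariant ranks directly as ranks of invariant sublattices of $\Pic(\XX) \cong \Z^7$, using the basis $L, E_1, \dots, E_6$ from Remark \ref{DP_1curves} together with the averaging formula
$$
\operatorname{rk} \Pic(\XX)^{G'} = \frac{1}{|G'|} \sum_{h \in G'} \operatorname{tr}\left(h \mid \Pic(\XX) \otimes \Q\right),
$$
valid for any finite $G' \subset W(E_6)$ since the right-hand side is the trace of the idempotent projector $\frac{1}{|G'|}\sum_h h$ onto the invariant subspace. Every element of $W(E_6)$ fixes $-K_{\XX}$, so the invariant rank is always at least $1$; the whole point is to show it is exactly $1$.

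First I would record the action of the generators. The permutations $a = (123)$, $b = (456)$ and $c = (14)(25)(36)$ fix $L$, permute the $E_i$ by their indices, and commute with $s$. For $s$ I would solve for its matrix from $s(E_i) = Q_i$ and $s(L_{ij}) = L_{ij}$: writing $s(L) - s(E_i) - s(E_j) = L - E_i - E_j$ and $Q_i = 2L + E_i - \sum_{k=1}^6 E_k$ yields
$$
s(L) = 5L - 2\sum_{k=1}^6 E_k, \qquad s(E_i) = Q_i = 2L - \sum_{k \neq i} E_k.
$$
The key step is then a clean trace formula. For a permutation $\sigma$ fixing $L$ one has $\operatorname{tr}(\sigma) = 1 + |\mathrm{Fix}(\sigma)|$, where $|\mathrm{Fix}(\sigma)|$ is the number of indices fixed by $\sigma$. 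For the twisted elements $\sigma s$ I would use $(\sigma s)(L) = 5L - 2\sum_k E_k$, so the $L$-diagonal entry is $5$, and $(\sigma s)(E_i) = Q_{\sigma(i)}$; since the $E_i$-coefficient of $Q_j$ is $0$ when $i = j$ and $-1$ otherwise, the $E_i$-diagonal entry is $0$ precisely when $\sigma(i) = i$ and $-1$ otherwise, whence
$$
\operatorname{tr}(\sigma s) = 5 - \left(6 - |\mathrm{Fix}(\sigma)|\right) = |\mathrm{Fix}(\sigma)| - 1.
$$

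It then remains to enumerate the two cyclic groups and read off the fixed-point counts of the permutation parts. For $\langle abcs \rangle \cong \CG_6$ the nontrivial powers are $abcs$, $(ab)^2$, $cs$, $ab$, $(abc)^{-1}s$ (odd powers keep the factor $cs$, even powers drop it), with permutation parts $abc$, $(ab)^2$, $c$, $ab$, $(abc)^{-1}$ that are all fixed-point-free; the two formulas give traces $-1, 1, -1, 1, -1$, so the average is $\tfrac{1}{6}(7 - 1 + 1 - 1 + 1 - 1) = 1$. For $\langle a^2 b, cs \rangle \cong \SG_3$ the nontrivial elements are the rotations $a^2 b$, $ab^2$ (each a product of two $3$-cycles) and the involutions $cs$, $(a^2 b)cs$, $(ab^2)cs$ (each a product of three transpositions), again all with fixed-point-free permutation parts; the traces are $1, 1, -1, -1, -1$ and the average is $\tfrac{1}{6}(7 + 1 + 1 - 1 - 1 - 1) = 1$.

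The only place demanding care—and the main source of possible error—is the bookkeeping: getting the matrix of $s$ right (the coefficient $5$ of $L$ and the sign pattern of $Q_j$) and correctly computing the cycle types of the composite permutations $abc$, $a^2bc$ and $ab^2c$ to confirm that they fix no index. Once these are verified, both averages collapse to $6/6 = 1$, which proves the lemma.
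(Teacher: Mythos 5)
Your proof is correct, and I checked the key computations: the matrix of $s$ in the basis $L,E_1,\dots,E_6$ (in particular $s(L)=5L-2\sum E_k$, which agrees with the formula the paper itself records later in Section~5), the trace formulas $\operatorname{tr}(\sigma)=1+|\mathrm{Fix}(\sigma)|$ and $\operatorname{tr}(\sigma s)=|\mathrm{Fix}(\sigma)|-1$, and the fact that all the relevant permutation parts ($abc$ is the $6$-cycle $(153426)$, $a^2bc=(15)(26)(34)$, etc.) are fixed-point-free, so both averages equal $6/6=1$. Your route is genuinely different from the paper's: the paper argues in two explicit steps, first noting that $(abcs)^4=ab$ and $(abcs)^3=cs$, then writing down generators $K_X$, $\sum_{i=1}^3E_i$, $\sum_{i=4}^6E_i$ of the rank-$3$ lattices $\Pic(\XX)^{\langle ab\rangle}$ and $\Pic(\XX)^{\langle a^2b\rangle}$, and finally checking that $cs$ acts on these with only $K_X$ invariant. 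The paper's method produces the invariant sublattice itself (which is reused implicitly elsewhere, e.g.\ in Lemma~\ref{Galnonmin2}), while your averaging formula $\operatorname{rk}\Pic(\XX)^{G'}=\frac{1}{|G'|}\sum_h\operatorname{tr}(h)$ only yields the rank but reduces everything to counting fixed indices of permutations, which scales better and is less error-prone once the two trace formulas are established. One small point to make explicit if you write this up: you should verify that $cs$ inverts $a^2b$ (via $cac^{-1}=b$, $cbc^{-1}=a$, and $s$ central in the centralizer), so that $\langle a^2b,cs\rangle$ really is the six-element group $\SG_3$ you enumerate; as written you assert the element list without justifying the group structure.
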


\begin{proof}
One has $(abcs)^4 = ab$ and $(abcs)^3 = cs$.

Note that the groups $\Pic(X)^{\langle ab \rangle}$ and $\Pic(X)^{\langle a^2b \rangle}$ are generated by $K_X$, $\sum \limits_{i=1}^3 E_i$ and~$\sum \limits_{i=4}^6 E_i$. One can check that $cs$-invariants in $\Pic(X)^{\langle ab \rangle}$ and $\Pic(X)^{\langle a^2b \rangle}$ are generated by~$K_X$. 
\end{proof}

\begin{corollary}
\label{Galmin2}
Suppose that $\pi_1(\Gamma)$ contains the element $cs$ and at least one element of order $3$ and $cs \in \Gamma$ then $\rho(X) = 1$.
\end{corollary}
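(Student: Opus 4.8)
The statement $\rho(X)=1$ means $\Pic(\XX)^{\Gamma}=\Z K_X$, and since $K_X$ is fixed by every element of $W(E_6)$ it is enough to produce a subgroup $\Gamma'\subseteq\Gamma$ with $\Pic(\XX)^{\Gamma'}=\Z K_X$; then $\Z K_X\subseteq\Pic(\XX)^{\Gamma}\subseteq\Pic(\XX)^{\Gamma'}=\Z K_X$. By hypothesis $cs\in\Gamma$, and there is an element $\gamma\in\Gamma$ whose first component $\pi_1(\gamma)$ has order $3$; thus $\pi_1(\gamma)=a^{i}b^{j}$ for some $(i,j)\ne(0,0)$. The plan is to manufacture from $\gamma$ and $cs$ a subgroup of $\Gamma$ lying in the first factor $\CG_3^2\rtimes\CG_2=\langle a,b,cs\rangle$, to which Lemma \ref{Galmin1} applies.

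First I would remove the second component of $\gamma$. Recall that the two factors of $H$ commute and that $cs$ lies in the first factor, so $\pi_2(\gamma)\in\SG_3=\langle r,s\rangle$. If $\pi_2(\gamma)$ has order $2$, then $\gamma^{2}$ already lies in the first factor and $\pi_1(\gamma^{2})=a^{2i}b^{2j}$ still has order $3$. If $\pi_2(\gamma)$ has order $3$, I would pass to the commutator $[\gamma,cs]\in\Gamma$: since $cs$ commutes with the whole second factor one has $\pi_2([\gamma,cs])=1$, while $\pi_1([\gamma,cs])=(ab^{2})^{\,i-j}$ because conjugation by $cs$ interchanges $a$ and $b$. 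Hence, provided $i\ne j$, the commutator is an order-$3$ element of $\Gamma$ lying in the first factor. In either subcase I obtain $\delta\in\Gamma\cap\langle a,b,cs\rangle$ of order $3$, and together with $cs$ it generates a subgroup $\Gamma'\subseteq\Gamma$ of the first factor. A short check, using that $cs$ interchanges $a$ and $b$, shows that $\Gamma'$ is conjugate inside $\langle a,b,cs\rangle$ to $\langle abcs\rangle$ or to $\langle a^{2}b,cs\rangle$, or else contains $\langle abcs\rangle$; in every case Lemma \ref{Galmin1} gives $\Pic(\XX)^{\Gamma'}=\Z K_X$, and we are done.

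The one configuration not reached by this reduction is $\pi_1(\gamma)\in\langle ab\rangle$ together with $\pi_2(\gamma)$ of order $3$, for then $[\gamma,cs]=1$ and no power of $\gamma$ kills its second component. Here $\gamma$ commutes with $cs$, so $\langle\gamma,cs\rangle$ is cyclic of order $6$, generated by an element of the shape $abcs\cdot r$ (after replacing $\gamma$ and $cs$ by suitable powers). This is the main obstacle: the element $abcs\cdot r$ does not lie in the first factor, so Lemma \ref{Galmin1} does not apply verbatim. I expect to close this case by a direct computation of its invariant sublattice --- for instance, by checking on the $27$ lines that $abcs\cdot r$ is conjugate in $W(E_6)$ to $abcs$, or equivalently that $abcs\cdot r$ has no eigenvalue $1$ on $K_X^{\perp}\otimes\Q$ --- which again yields $\Pic(\XX)^{\langle abcs\cdot r\rangle}=\Z K_X$ and hence $\rho(X)=1$.
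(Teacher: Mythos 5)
Your overall route is the natural one and, as far as one can tell, the intended one: the paper states this as an immediate consequence of Lemma \ref{Galmin1} with no written proof, and your reduction --- manufacture inside $\Gamma$ a subgroup of the first factor $\langle a,b,cs\rangle$ containing $cs$ and an element of order $3$, then quote Lemma \ref{Galmin1} --- is exactly the bookkeeping needed to make that deduction honest. Your two reductions are computed correctly: squaring $\gamma$ when $\pi_2(\gamma)$ has order at most $2$, and the commutator $[\gamma,cs]$ with $\pi_2([\gamma,cs])=1$ and $\pi_1([\gamma,cs])=(ab^2)^{i-j}$ when $\pi_2(\gamma)$ has order $3$ and $i\ne j$. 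The final observation is also right: an order-$3$ element of $\langle a,b\rangle$ together with $cs$ generates $\langle abcs\rangle$, $\langle a^2b,cs\rangle$, or the whole of $\langle a,b,cs\rangle\supseteq\langle abcs\rangle$, and Lemma \ref{Galmin1} applies in each case.

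The one place your write-up is not yet a proof is the residual case $\pi_1(\gamma)\in\langle ab\rangle$, $\pi_2(\gamma)$ of order $3$, which you only sketch (``I expect to close this case by a direct computation''). Two remarks. First, this case never occurs where the paper actually invokes the corollary: in the proof of Proposition \ref{GalClass} it is applied under the standing assumption $r\notin\pi_2(\Gamma)$, and in Section $6$ the order-$3$ element is $ab$ itself (the image of $G$), which already lies in the first factor; so for the paper's purposes your first two cases suffice. Second, if you want the statement in the generality in which it is written, you do not need a fresh eigenvalue computation: such a $\gamma$ is one of $abr$, $(ab)^2r^2$ (generating $\langle abr\rangle$) or $abr^2$, $(ab)^2r$ (generating the conjugate of $\langle abr\rangle$ by $c$, since $crc^{-1}=r^2$ and $c$ centralizes $ab$), and Lemma \ref{Galmin3} together with Remark \ref{DP3type3} already gives $\rho(X)^{\langle abr\rangle}=1$. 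So in this case $\gamma$ alone forces $\rho(X)=1$ and $cs$ is not even needed. With that substitution your argument is complete.
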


\begin{lemma}
\label{Galmin3}
One has $\rho(X)^{\langle abr \rangle} = \rho(X)^{\langle a^2br \rangle} = 1$.
\end{lemma}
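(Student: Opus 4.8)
The plan is to compute the rank of the fixed sublattice $\Pic(\XX)^{\langle abr \rangle}$ directly by a trace computation, and to treat $\langle a^2br \rangle$ in the same way.

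First I would record the structural input from Proposition \ref{W6center}: the elements $a$ and $b$ lie in the first direct factor of $H$ while $r$ lies in the second, so $a$, $b$, $r$ pairwise commute. Since each of them has order $3$, the product $abr$ has order dividing $3$, and it is nontrivial because $ab$ and $r^{-1}$ lie in different direct factors. Hence $\langle abr \rangle \cong \CG_3$, and because $K_X$ is always fixed we already know $\rho(X)^{\langle abr \rangle} \geqslant 1$; the same remarks apply verbatim to $a^2br$. Now, as $abr$ has order $3$, its characteristic polynomial on $\Pic(\XX) \otimes \Q \cong \Q^7$ factors over $\Q$ as $(\lambda-1)^k (\lambda^2+\lambda+1)^m$ with $k + 2m = 7$, where $k = \rho(X)^{\langle abr \rangle}$ is the multiplicity of the eigenvalue $1$. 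Its trace equals $k - m$, so the assertion $k = 1$ is equivalent to $\operatorname{tr}(abr) = -2$. The plan is therefore reduced to computing this one trace.

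To do so I would write $abr$ in the basis $L, E_1, \dots, E_6$. The permutations $a = (123)$ and $b = (456)$ act by permuting the $E_i$ and fixing $L$, while the action of $r$ is read off from Lemma \ref{W6fixL}: using Remark \ref{DP_1curves} one rewrites $r(E_i) = Q_i$ for $i \in \{1,2,3\}$ and $r(E_i) = L_{jk}$ for $i \in \{4,5,6\}$ in the basis $\{L, E_i\}$, and one pins down $r(L)$ from the identity $r(K_X) = K_X$, obtaining $r(L) = 4L - E_1 - E_2 - E_3 - 2E_4 - 2E_5 - 2E_6$. Composing with $a$ and $b$ then gives $abr(E_i) = Q_{\sigma(i)}$ for $i \in \{1,2,3\}$ (some cyclic permutation $\sigma$ of $\{1,2,3\}$), $abr(E_i) = L_{jk}$ with $j,k \in \{4,5,6\}$ for $i \in \{4,5,6\}$, and $abr(L) = 4L - E_1 - E_2 - E_3 - 2E_4 - 2E_5 - 2E_6$. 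Reading the diagonal, the $L$-entry contributes $4$, while each of the six $E_i$-entries is the coefficient of $E_i$ in a $Q_{\sigma(i)}$ or an $L_{jk}$ not involving $E_i$ with sign, namely $-1$; hence $\operatorname{tr}(abr) = 4 - 6 = -2$ and $\rho(X)^{\langle abr \rangle} = 1$.

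Finally I would repeat the identical bookkeeping with $a^2 = (132)$ in place of $a$: the image of $L$ is unchanged by the symmetry in the indices $1,2,3$, the curves $E_1,E_2,E_3$ are again sent to the $Q_j$ with all diagonal entries $-1$, and $E_4,E_5,E_6$ are again sent to the $L_{jk}$ with diagonal entries $-1$, so $\operatorname{tr}(a^2br) = -2$ and $\rho(X)^{\langle a^2br \rangle} = 1$. The only delicate point — the main obstacle — is the honest conversion of $r$ into the basis $\{L, E_i\}$, in particular the determination of $r(L)$, since $r$ does not permute this basis; once the seven images are written out the trace, and hence the conclusion, is immediate.
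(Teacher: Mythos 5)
Your proof is correct, but it takes a genuinely different route from the paper. The paper's argument is classification-based: it observes that every $\langle abr \rangle$-orbit (and every $\langle a^2br \rangle$-orbit) of a $(-1)$-curve consists of three mutually intersecting curves, deduces from the classification of order-$3$ elements of $W(E_6)$ (established in the proof of Lemma \ref{PicC35}) that such an element must be of type $3$ of Table \ref{table2}, and then invokes Remark \ref{DP3type3}, i.e.\ the geometric fact that $\XX / \langle g \rangle \cong \Pro^2_{\kka}$ for type $3$, to get $\rho = 1$. You instead do a self-contained trace computation on $\Pic(\XX) \otimes \Q$: since $abr$ has order $3$ (which you correctly justify via the commutation relations of Proposition \ref{W6center}), its characteristic polynomial is $(\lambda-1)^k(\lambda^2+\lambda+1)^m$ with $k+2m=7$, so $k=1$ iff the trace is $-2$; your determination $rL = 4L - \sum_{i=1}^3 E_i - 2\sum_{i=4}^6 E_i$ agrees with the formula the paper itself records at the start of the second half of Section $5$, and the diagonal entries $4, -1, \dots, -1$ do sum to $-2$ for both $abr$ and $a^2br$ because neither $(123)$ nor $(132)$, nor $(456)$, fixes an index. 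One small slip in wording: for $i \in \{4,5,6\}$ the diagonal entry is $-1$ precisely because the image $L_{jk}$ \emph{does} involve $E_i$ (e.g.\ $abr(E_4) = L_{46}$); if it did not, the entry would be $0$ and the count would fail. Your approach buys independence from the classification of automorphisms of cubic surfaces and from the geometric input about the quotient being $\Pro^2_{\kka}$, at the cost of an explicit matrix bookkeeping; the paper's is shorter but leans on machinery already set up for Lemma \ref{PicC35}.
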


\begin{proof}
Note that any $abr$ and $a^2br$ orbit of a $(-1)$-curve consists of three $(-1)$-curves meeting each other. Therefore these elements has type $3$ of Table \ref{table2}. Thus by Remark \ref{DP3type3} one has
$$
\rho(X)^{\langle abr \rangle} = \rho(X)^{\langle a^2br \rangle} = 1.
$$
\end{proof}

\begin{corollary}
\label{Galmin4}
If $r \in \pi_2(\Gamma)$, and $\pi_1(\Gamma)$ contains $ab$ or $a^2b$ then $\rho(X) = 1$.
\end{corollary}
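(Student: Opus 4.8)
The plan is to reduce the assertion $\rho(X)=1$ to the existence of a single element of $\Gamma$ whose action on $\Pic(\XX)$ has one-dimensional invariants. Since $X(\ka)\ne\varnothing$, one has $\rho(X)=\operatorname{rk}\Pic(\XX)^{\Gamma}$, and for any $g\in\Gamma$ the inclusion $\langle g\rangle\subset\Gamma$ yields $\Pic(\XX)^{\Gamma}\subset\Pic(\XX)^{\langle g\rangle}$, so $\rho(X)\leqslant\rho(X)^{\langle g\rangle}$. On the other hand $K_X$ is a nonzero $\Gamma$-invariant class, so $\rho(X)\geqslant1$ always. Hence it suffices to find $g\in\Gamma$ of type $3$, i.e.\ with $\rho(X)^{\langle g\rangle}=1$. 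By Lemma \ref{Galmin3} the elements $abr$ and $a^2br$ are of this kind, and I would first record the mild extension that \emph{every} element $a^ib^jr^k$ with $i,j,k\not\equiv0\pmod3$ is of type $3$: this is proved exactly as in Lemma \ref{Galmin3}, by checking that each of its orbits on the $27$ lines is a triangle of three mutually meeting $(-1)$-curves (equivalently, one uses that type $3$ is preserved under inversion and under conjugation by $cs$, which sends $a^ib^jr^k$ to $a^jb^ir^k$).

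Next I would extract such an element from the hypotheses, working in $H\cong(\CG_3^2\rtimes\CG_2)\times\SG_3$ and using constantly that $a$, $b$ commute with $r$, $s$, that $(cs)a(cs)^{-1}=b$, and that $srs^{-1}=r^{-1}$. Since $r\in\pi_2(\Gamma)$, I choose $\gamma=a^{p}b^{q}(cs)^{\varepsilon}r\in\Gamma$; then either $\gamma$ itself (if $\varepsilon=0$) or $\gamma^2=a^{p+q}b^{p+q}r^2$ (if $\varepsilon=1$) is an element $a^{p'}b^{q'}r^{k}\in\Gamma\cap\langle a,b,r\rangle$ with $k\not\equiv0$. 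Since $ab\in\pi_1(\Gamma)$, I choose $\delta=(ab)\beta\in\Gamma$ with $\beta\in\SG_3$. If $\beta$ has order $3$ then $\delta=abr^{\pm1}$ is already of type $3$ and we are done; otherwise $\beta^2=1$, so $\delta^2=a^2b^2$ and therefore $ab\in\Gamma$.

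It then remains to combine the two pure elements $ab$ and $a^{p'}b^{q'}r^{k}$ inside the elementary abelian group $\langle a,b,r\rangle\cong\CG_3^3$. Multiplying by a suitable power of $ab$, the element $a^{p'}b^{q'}r^{k}\cdot(ab)^{t}=a^{p'+t}b^{q'+t}r^{k}$ can be arranged to have all three exponents nonzero (only the two values $t\equiv-p'$ and $t\equiv-q'$ are forbidden, leaving at least one admissible $t$), and by the extension above this element of $\Gamma$ is of type $3$, forcing $\rho(X)=1$. The case $a^2b\in\pi_1(\Gamma)$ is symmetric via the automorphism $cs$ interchanging $a$ and $b$.

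The hard part, as this outline shows, is not any single computation but the fact that the hypotheses control only the projections $\pi_1(\Gamma)$ and $\pi_2(\Gamma)$ and not $\Gamma$ itself, so one may not assume $abr\in\Gamma$ directly. The crux is therefore to clear the order-two components — $s$ in the $\SG_3$-factor and $cs$ in the $\CG_3^2\rtimes\CG_2$-factor — by passing to squares, and then to see that the $3$-group generated by the two surviving elements of $\Gamma\cap\langle a,b,r\rangle$ must contain an element with all exponents nonzero, which the extension of Lemma \ref{Galmin3} identifies as type $3$.
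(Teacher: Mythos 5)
Your proof is correct, and it is worth saying up front that the paper itself offers \emph{no} argument for this corollary: it is stated immediately after Lemma \ref{Galmin3} as if it followed at once, i.e.\ as if the hypotheses produced the element $abr$ or $a^2br$ inside $\Gamma$. You correctly identify that they do not --- they only constrain the projections $\pi_1(\Gamma)$ and $\pi_2(\Gamma)$ --- and this full strength is genuinely used in the proof of Proposition \ref{GalClass}, where nothing is known about $\Gamma$ beyond its projections. So your contribution is not a different route but a careful completion of the paper's implicit one: the reduction to a type-$3$ element via $\rho(X)\leqslant\rho(X)^{\langle g\rangle}$ is exactly the paper's mechanism, and the squaring trick (killing the $cs$-component of a preimage of $r$ and the $s$-component of a preimage of $ab$, then adjusting exponents by powers of $ab$ inside $\langle a,b,r\rangle\cong\CG_3^3$) is the missing group theory. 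All the individual computations check out: $\bigl(a^pb^q(cs)r\bigr)^2=a^{p+q}b^{p+q}r^2$ since the two factors of $H$ commute and $(cs)a(cs)=b$; $\bigl((ab)\beta\bigr)^2=a^2b^2$ when $\beta^2=1$; and at most two of the three values of $t$ are excluded when arranging all exponents of $a^{p'+t}b^{q'+t}r^k$ to be nonzero.

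One small imprecision: your parenthetical claim that the extension of Lemma \ref{Galmin3} to all $a^ib^jr^k$ with $i,j,k\not\equiv 0$ follows from inversion together with conjugation by $cs$ is not quite enough. Those two operations generate from $\{abr,\,a^2br\}$ only six of the eight such elements; the pair $abr^2$ and $a^2b^2r$ is missed, and $abr^2$ can actually arise in your final step (e.g.\ when the element of $\Gamma\cap\langle a,b,r\rangle$ is $r^2$ itself). You need conjugation by $s$, which fixes $a,b$ and sends $r$ to $r^2$, so $abr^2=s(abr)s^{-1}$; alternatively your primary justification --- the direct check that every orbit of such an element on the $27$ lines is a triangle of pairwise meeting $(-1)$-curves, hence sums to $-K_X$ --- does cover all eight cases. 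With that repair the argument is complete.
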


As a result of all the previous lemmas we have the following proposition.

\begin{proposition}
\label{GalClass}
Let $X$ be a del Pezzo surface of degree $3$ and $G \cong \CG_3$ be a group acting as in type $5$ of Table \ref{table2}. Let $\Gamma$ be the image of the Galois group $\Gal\left(\kka / \ka\right)$ in the Weyl group $W(E_6)$. If $\rho(X) > 1$ and $\rho(X)^G = 1$ then we have the following possibilities for $\Gamma$ up to conjugation:

\begin{enumerate}

\item $\Gamma = \langle cs \rangle \cong \CG_2$;

\item $\Gamma = \langle c, s \rangle \cong \CG_2^2$;

\item $\Gamma = \langle r \rangle \cong \CG_3$;

\item $\Gamma = \langle ar \rangle \cong \CG_3$;

\item $\Gamma = \langle a, r \rangle \cong \CG_3^2$;

\item $\Gamma = \langle cs, r \rangle \cong \CG_6$;

\item $\Gamma = \langle r, s \rangle \cong \SG_3$;

\item $\Gamma = \langle a, r, s \rangle \cong \CG_3 \times \SG_3$;

\item $\Gamma = \langle r, c \rangle \cong \SG_3$;

\item $\Gamma = \langle r, c, s \rangle \cong \SG_3 \times \CG_2$.

\end{enumerate}

\end{proposition}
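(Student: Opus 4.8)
The plan is to run through all conjugacy classes of subgroups $\Gamma$ of the centralizer $H \cong \left(\CG_3^2 \rtimes \CG_2\right) \times \SG_3$ of Proposition \ref{W6center} and to discard those forbidden by the two numerical hypotheses. Writing each element of $H$ as a pair whose coordinates lie in the first factor $F = \langle a, b, cs\rangle$ and the second factor $S = \langle r, s\rangle \cong \SG_3$, I would work throughout with the projections $\pi_1$ and $\pi_2$ and reconstruct $\Gamma$ from the four subgroups $\pi_1(\Gamma)$, $\pi_2(\Gamma)$, $\Gamma \cap F$ and $\Gamma \cap S$ in the style of Goursat's lemma. The hypothesis $\rho(X)^G = 1$ is fed in through the contrapositives of Lemmas \ref{Galnonmin1} and \ref{Galnonmin2}, and the hypothesis $\rho(X) > 1$ through the contrapositives of Corollaries \ref{Galmin2} and \ref{Galmin4}. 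Two preliminary remarks make the conjugacy reductions routine: the only involutions of $F$ are $cs$, $ab^2 cs$ and $a^2 b cs$, and these are mutually conjugate under $\langle a, b\rangle$; while the four subgroups of order $3$ in $\langle a, b\rangle$ fall into the conjugacy classes $\{\langle a\rangle, \langle b\rangle\}$, $\{\langle ab\rangle\}$ and $\{\langle ab^2\rangle\}$, the middle one being exactly the image of $G$.

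I would then split the analysis according to $\pi_2(\Gamma)$, which up to conjugacy in $S$ is one of $1$, $\langle s\rangle$, $\langle r\rangle$ or $\SG_3$. If $\pi_2(\Gamma)$ is trivial or equal to $\langle s\rangle$, Lemmas \ref{Galnonmin1} and \ref{Galnonmin2} force $\pi_1(\Gamma)$ to contain an involution, hence after conjugating the element $cs$, and in the $\langle s\rangle$-case they also force $s \in \Gamma$; Corollary \ref{Galmin2} then forbids $\pi_1(\Gamma)$ from carrying any element of order $3$, so $\pi_1(\Gamma) = \langle cs\rangle$ and $\Gamma$ is either $\langle cs\rangle$ or $\langle cs\rangle \times \langle s\rangle = \langle c, s\rangle$, giving classes (1) and (2). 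If instead $r \in \pi_2(\Gamma)$, then Corollary \ref{Galmin4} removes $ab$ and $a^2 b$ from $\pi_1(\Gamma)$, so the only order-$3$ subgroup of $\langle a, b\rangle$ still allowed inside $\pi_1(\Gamma)$ is, up to conjugacy, $\langle a\rangle$, and Corollary \ref{Galmin2} again prevents a reflection and an order-$3$ element from coexisting in $\pi_1(\Gamma)$; together these pin $\pi_1(\Gamma)$ down to $1$, $\langle a\rangle$ or $\langle cs\rangle$. Running $\pi_2(\Gamma) = \langle r\rangle$ and $\pi_2(\Gamma) = \SG_3$ against these possibilities produces the candidate lists (3)--(6) and (7)--(10).

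The delicate part is the Goursat bookkeeping of the subgroups that are not direct products of their two projections. I would use the value of $\Gamma \cap S \in \{1, \langle r\rangle, \SG_3\}$ to decide whether $r$ and $s$ genuinely belong to $\Gamma$: this is what separates the split $\SG_3 = \langle r, s\rangle$ of class (7) from the twisted copy $\langle r, c\rangle$ of class (9), in which $s \notin \Gamma$ because every element of $\Gamma$ mapping to a reflection of $S$ carries the nontrivial first coordinate $cs$; it likewise distinguishes $\langle cs, r\rangle \cong \CG_6$ of class (6), where $cs \in \Gamma \cap F$, from the diagonal $\langle ar\rangle \cong \CG_3$ of class (4), where $\Gamma \cap F = \Gamma \cap S = 1$. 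The one genuinely new point needed to close the count is that no diagonal copy of $\SG_3$ can occur when $\Gamma \cap S = 1$: such a $\Gamma$ would force $\pi_1(\Gamma)$ to contain a subgroup isomorphic to $\SG_3$, but every $\SG_3$ inside $F$ has rotation subgroup $\langle ab^2\rangle$ and hence contains $a^2 b$, which is excluded by Corollary \ref{Galmin4}. Finally I would check that the conjugations used to normalise reflections to $cs$ and order-$3$ subgroups to $\langle a\rangle$ can be performed simultaneously, since they take place in the commuting direct factors $F$ and $S$, so that the ten classes are pairwise non-conjugate and the list is complete.
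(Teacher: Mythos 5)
Your proposal is correct and follows essentially the same route as the paper's proof: the same case division according to $\pi_2(\Gamma)$ and the same four exclusion statements (Lemmas \ref{Galnonmin1}, \ref{Galnonmin2} and Corollaries \ref{Galmin2}, \ref{Galmin4}), with your Goursat-style bookkeeping simply making explicit the enumeration that the paper compresses into ``these possibilities correspond to cases (3)--(10).'' The only substantive difference is that the paper also proves the converse --- that each of the ten listed groups genuinely satisfies $\rho(X)>1$ and $\rho(X)^G=1$, using the $\Gamma$-invariant configurations $L_{14},L_{25},L_{36}$ or $L_{14},L_{24},L_{34}$ together with Lemmas \ref{Galmin1} and \ref{Galmin3} --- which you omit; this half is not required for the implication as literally stated, but it is what shows the list cannot be shortened.
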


\begin{proof}

At first we show that in all other cases one has either $\rho(X) = 1$ or $\rho(X)^G > 1$.

If $r \in \pi_2(\Gamma)$ then if $\pi_1(\Gamma)$ contains an element $ab$ or $a^2b$ then $\rho(X) = 1$ by Corollary~\ref{Galmin4}. Therefore in this case $\pi_1(\Gamma)$ should be trivial or conjugate to $\langle a \rangle$ or $\langle cs \rangle$. These possibilities correspond to cases (3)--(10) of the proposition.

Now we can assume that $r \notin \pi_2(\Gamma)$. If $\pi_1(\Gamma)$ contains the element $cs$ and at least one element of order $3$ then by Corollary \ref{Galmin2} one has $\rho(X) = 1$ in all cases except $\Gamma = \langle ab, c \rangle$, $\Gamma = \langle a^2b, c \rangle$ and $\Gamma = \langle a, b, c \rangle$. In the last three cases we have $\rho(X)^G > 1$ by Lemma \ref{Galnonmin1}. If $cs \notin \pi_1(\Gamma)$ then $\rho(X)^G > 1$ by Lemmas \ref{Galnonmin1} and \ref{Galnonmin2}. Therefore $\pi_1(\Gamma) = \langle cs \rangle$. This possibility corresponds to cases (1) and (2) of the proposition.

Now we show that in all these cases one has $\rho(X) > 1$ and~$\rho(X)^G = 1$.

In cases (1), (2), (6), (9) and (10) the $(-1)$-curves $L_{14}$, $L_{25}$ and $L_{36}$ are $\Gamma$-invariant. Therefore $X$ is not minimal and $\rho(X) > 1$. In cases (3), (4), (5), (7) and (8) the triple of disjoint \mbox{$(-1)$-curves} $L_{14}$, $L_{24}$ and $L_{34}$ is $\Gamma$-invariant. Therefore $X$ is not minimal and~$\rho(X) > 1$.

In cases (1) and (2) one has $\rho(X)^G = 1$ by Lemma \ref{Galmin1}. In the other cases one has $\rho(X)^G = 1$ by Lemma \ref{Galmin3}.

\end{proof}

Note that if one can contract a $(-1)$-curve defined over $\ka$ and $\rho(X) = 2$ or $\rho(X) = 3$ then the obtained del Pezzo surface either is not mininal or can be minimal del Pezzo surface of degree $4$. So for each case of Proposition \ref{GalClass} we should check whether the surface $X$ is $\ka$-rational.

\begin{lemma}
\label{Galnonrat1}
If the Galois group $\Gamma$ contains the element $cs$ then $X$ is not $\ka$-rational.
\end{lemma}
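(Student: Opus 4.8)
The plan is to reduce to the case $\Gamma = \langle cs \rangle$ and then exhibit a $\ka$-minimal model of $X$ of degree $4$. First I would use that $\ka$-rationality is inherited under extension of the base field: if $X$ were $\ka$-rational, then $X \otimes_{\ka} \ka'$ would be $\ka'$-rational for every field extension $\ka'/\ka$. Hence it suffices to produce one extension over which the surface is not rational. Taking $\ka'$ to be the subfield of $\kka$ fixed by the subgroup $\langle cs \rangle \subseteq \Gamma$, the image of $\Gal\left(\kka / \ka'\right)$ in $W(E_6)$ becomes exactly $\langle cs \rangle$, so the whole statement reduces to the case $\Gamma = \langle cs \rangle$.

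Next I would compute the action of the involution $cs$ on the $27$ lines. Since $c = (14)(25)(36)$ and $s$ interchanges $E_i$ with $Q_i$ while fixing every $L_{ij}$, one gets $cs\left(E_i\right) = Q_{c(i)}$, $cs\left(Q_i\right) = E_{c(i)}$ and $cs\left(L_{ij}\right) = L_{c(i)c(j)}$. A direct check shows that the only $(-1)$-curves fixed by $cs$ are $L_{14}$, $L_{25}$ and $L_{36}$, and that these meet each other pairwise; every remaining $(-1)$-curve lies in a $cs$-orbit consisting of two intersecting curves. In particular the trace of $cs$ on $\Pic(\XX)$ equals $-1$, so $\rho(X) = \rho(\XX)^{\langle cs \rangle} = 3$.

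I would then contract the $\ka'$-rational $(-1)$-curve $L_{14}$ to obtain a del Pezzo surface $X_1$ of degree $4$ with $\rho(X_1) = 2$. The crucial step is to show that $X_1$ is $\Gamma$-minimal. Over $\kka$ the surface $\XX_1$ carries sixteen $(-1)$-curves, namely the images of the $(-1)$-curves on $\XX$ disjoint from $L_{14}$; the involution $cs$ permutes them in eight orbits of size two, each orbit consisting of two intersecting curves, and fixes none of them. Consequently $X_1$ admits no $\Gamma$-invariant set of disjoint $(-1)$-curves, so it is $\Gamma$-minimal, and by Theorem \ref{Minclass} it is a conic bundle with $\Pic(X_1)^{\Gamma} \cong \Z^2$.

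Finally, since $X \approx X_1$ over $\ka'$ and $K_{X_1}^2 = 4 < 5$, Theorem \ref{ratcrit} shows that $X_1$, and hence $X \otimes_{\ka} \ka'$, is not $\ka'$-rational; therefore $X$ is not $\ka$-rational. The technical heart of the argument is the verification that contracting the fixed curve $L_{14}$ does not open up any new $\Gamma$-equivariant contraction, that is, the bookkeeping of the sixteen $(-1)$-curves on $\XX_1$ and of the $cs$-orbits among them. This is precisely what rules out descending to a surface of degree $\geq 5$, and I expect it to be the main obstacle.
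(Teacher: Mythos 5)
Your argument is correct and follows essentially the same route as the paper: analyse the $cs$-orbits on the $27$ lines, observe that the only fixed $(-1)$-curves are the pairwise intersecting $L_{14}$, $L_{25}$, $L_{36}$ while all other orbits consist of two intersecting curves, and conclude via Theorem \ref{ratcrit} that the minimal model has $K^2\leqslant 4$. The only difference is that you explicitly verify that the degree-$4$ surface obtained by contracting one fixed line is still minimal (the eight size-two orbits of intersecting curves among the sixteen lines), a step the paper compresses into the single assertion that ``we can contract no more than one $(-1)$-curve''; your base change to $\ka'$ is harmless but unnecessary, since a $\Gamma$-invariant set is in particular $\langle cs\rangle$-invariant.
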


\begin{proof}
Note that the $(-1)$-curves $L_{14}$, $L_{25}$ and $L_{36}$ are $cs$-invariant and the other \mbox{$(-1)$-curves} form $cs$-invariant pairs of $(-1)$-curves which are not disjoint. The curves $L_{14}$, $L_{25}$ and $L_{36}$ meet each other. Therefore we can contract no more than one $(-1)$-curve and $X$ is not $\ka$-rational by Theorem \ref{ratcrit}.
\end{proof}

\begin{lemma}
\label{Galnonrat2}
If the Galois group $\Gamma$ contains the elements $c$ and $r$ then $X$ is not $\ka$-rational.
\end{lemma}

\begin{proof}
Note that if a $\langle c , r \rangle$-orbit contains $E_i$ then it contains $Q_j$ with $i \ne j$. Therefore we cannot contract any of these orbits. Also we cannot contract $\langle c , r \rangle$-invariant pairs $L_{15}$ and $L_{24}$, $L_{16}$ and $L_{34}$, $L_{26}$ and $L_{35}$. The $(-1)$-curves $L_{14}$, $L_{25}$ and $L_{36}$ are $\langle c , r \rangle$-invariant and meet each other. Therefore we cannot contract more than one $(-1)$-curve and $X$ is not $\ka$-rational by Theorem \ref{ratcrit}.
\end{proof}

\begin{lemma}
\label{Galrat}
If the Galois group $\Gamma$ is contained in $\langle a, r, s \rangle$ then $X$ is $\ka$-rational.
\end{lemma}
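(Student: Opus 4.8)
The goal is to establish $\ka$-rationality uniformly for all the subgroups listed in cases (3), (4), (5), (7) and (8) of Proposition \ref{GalClass}, since these are exactly the groups $\Gamma$ contained in $\langle a, r, s\rangle$. My strategy is the standard one used throughout the paper: exhibit a $\Gamma$-invariant configuration of pairwise disjoint $(-1)$-curves (which is therefore defined over $\ka$), contract it over $\ka$ to reach a rational surface of degree at least $5$ carrying a $\ka$-point, and then invoke Corollary \ref{piccrit}.

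The first step is to choose the right curves. I would take the three $(-1)$-curves $L_{14}$, $L_{24}$, $L_{34}$. By Lemma \ref{W6fixL} the generators $r$ and $s$ fix each of the nine curves $L_{ij}$ with $i \in \{1,2,3\}$ and $j \in \{4,5,6\}$, in particular they fix $L_{14}$, $L_{24}$ and $L_{34}$; and $a = (123)$ permutes these three cyclically. Hence the set $\{L_{14}, L_{24}, L_{34}\}$ is invariant under $\langle a, r, s\rangle$, and a fortiori under every subgroup $\Gamma \subseteq \langle a, r, s\rangle$. Crucially these three curves share the index $4$, so by Remark \ref{DP_1curves} one has $L_{14} \cdot L_{24} = L_{14} \cdot L_{34} = L_{24} \cdot L_{34} = 0$, i.e.\ they are pairwise disjoint. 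One must avoid the $G$-orbit $\{L_{14}, L_{25}, L_{36}\}$, whose members meet pairwise; it is precisely the transversal triple with a common index that makes the contraction possible.

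Being $\Gamma$-invariant, this triple is defined over $\ka$, so it can be contracted over $\ka$ to produce a smooth rational surface $Y$ over $\ka$ with $K_Y^2 = K_X^2 + 3 = 6$. Since $X(\ka) \ne \varnothing$, we also have $Y(\ka) \ne \varnothing$, and $\rho(Y) + K_Y^2 \geqslant 1 + 6 = 7$. Then Corollary \ref{piccrit}, applied with the trivial group, shows that $Y$, and hence $X \approx Y$, is $\ka$-rational.

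The point requiring care—and the reason the lemma is not immediate—is that one cannot apply Corollary \ref{piccrit} directly to $X$. A short computation of the invariant Picard lattice shows that for the largest group $\Gamma = \langle a, r, s\rangle$ of case (8) one has $\rho(X)^{\Gamma} = 3$, so that $\rho(X) + K_X^2 = 6 < 7$ and the criterion fails on the nose. Contracting the triple above is exactly what pushes the degree up to $6$, after which the numerics of Corollary \ref{piccrit} go through. As every $\Gamma$ under consideration lies inside $\langle a, r, s\rangle$, the same triple $\{L_{14}, L_{24}, L_{34}\}$ settles all five cases at once.
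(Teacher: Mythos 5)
Your proof is correct and takes essentially the same route as the paper: both contract a $\Gamma$-invariant triple of pairwise disjoint $(-1)$-curves, obtaining a del Pezzo surface of degree $6$ with a $\ka$-point, and conclude by the rationality criterion. The only difference is the choice of triple --- the paper contracts $\{E_4, L_{56}, Q_4\}$ while you contract $\{L_{14}, L_{24}, L_{34}\}$ --- and both choices are invariant under $\langle a, r, s\rangle$ and consist of disjoint curves, so both work.
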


\begin{proof}
We can contract $(-1)$-curves $E_4$, $L_{56}$ and $Q_4$, and get a del Pezzo surface of degree~$6$ which is $\ka$-rational by Theorem \ref{ratcrit}.
\end{proof}

Now we can prove the following theorem.

\begin{theorem}
\label{Galcrit}
Let $X$ be a $\ka$-rational del Pezzo surface of degree $3$ and $G \cong \CG_3$ be a group acting as in type $5$ of Table \ref{table2}. Let $\Gamma$ be the image of the Galois group $\Gal\left(\kka / \ka\right)$ in the Weyl group $W(E_6)$. If $X / G$ is not $\ka$-rational then we have the following possibilities for $\Gamma$ up to conjugation:

\begin{enumerate}

\item $\Gamma = \langle r \rangle \cong \CG_3$;

\item $\Gamma = \langle ar \rangle \cong \CG_3$;

\item $\Gamma = \langle a, r \rangle \cong \CG_3^2$;

\item $\Gamma = \langle r, s \rangle \cong \SG_3$;

\item $\Gamma = \langle a, r, s \rangle \cong \CG_3 \times \SG_3$.

\end{enumerate}

\end{theorem}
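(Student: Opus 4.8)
The plan is to run a pure case-elimination, feeding the classification of Proposition \ref{GalClass} into the non-rationality criteria of Lemmas \ref{Galnonrat1} and \ref{Galnonrat2}. All of the substantive geometry has already been packaged into the preceding results, so the only task is to check that the hypotheses of the theorem place us inside the scope of Proposition \ref{GalClass} and then to discard the cases incompatible with the $\ka$-rationality of $X$.

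First I would verify the two numerical hypotheses required by Proposition \ref{GalClass}. Since $X / G$ is assumed not $\ka$-rational, Remark \ref{DP3C35rat} forces $\rho(X)^G = 1$: otherwise $X$ is not $G$-minimal and $X / G$ is $G / N$-birationally equivalent to a quotient of a del Pezzo surface of degree $\geqslant 4$ by a group of order $3$, which is $\ka$-rational by Theorem \ref{ratquot4}. On the other hand, since $X$ is assumed $\ka$-rational, we cannot have $\rho(X) = 1$, for a minimal cubic surface has $K_X^2 = 3 < 5$ and hence is not $\ka$-rational by Theorem \ref{ratcrit}. Thus $\rho(X) > 1$ and $\rho(X)^G = 1$, and $\Gamma$ falls under one of the ten cases (1)--(10) listed in Proposition \ref{GalClass}.

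Next I would eliminate the five cases incompatible with the $\ka$-rationality of $X$. Cases (1), (2) and (6) all have $cs \in \Gamma$, so $X$ is not $\ka$-rational by Lemma \ref{Galnonrat1}, contradicting the hypothesis; this rules them out. Cases (9) and (10) have both $c \in \Gamma$ and $r \in \Gamma$, so $X$ is not $\ka$-rational by Lemma \ref{Galnonrat2}, again a contradiction. (Case (10) could alternatively be discarded via Lemma \ref{Galnonrat1}, since it also contains $cs$.) The surviving possibilities are exactly cases (3), (4), (5), (7) and (8) of Proposition \ref{GalClass}, which are precisely the five groups $\langle r \rangle$, $\langle ar \rangle$, $\langle a, r \rangle$, $\langle r, s \rangle$ and $\langle a, r, s \rangle$ listed in the statement.

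For completeness I would observe that these five configurations are genuinely consistent with the hypotheses rather than vacuously excluded: each of the five groups is contained in $\langle a, r, s \rangle$, so $X$ is indeed $\ka$-rational in every such case by Lemma \ref{Galrat}. I do not expect any real obstacle here. The one conceptually important step is the reduction in the previous paragraph establishing that $\rho(X)^G = 1$ and $\rho(X) > 1$, which brings us into the setting of Proposition \ref{GalClass}; after that, the argument is bookkeeping, the only thing to check being that each of the discarded cases (1), (2), (6), (9), (10) contains one of the trigger elements ($cs$, or the pair $c, r$), which is immediate from the given generators.
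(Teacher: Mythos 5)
Your proposal is correct and follows essentially the same route as the paper: reduce to the ten cases of Proposition \ref{GalClass} via Remark \ref{DP3C35rat} and Theorem \ref{ratcrit}, then discard the non-$\ka$-rational cases using Lemmas \ref{Galnonrat1} and \ref{Galnonrat2} and confirm the survivors with Lemma \ref{Galrat}. The only cosmetic difference is that the paper handles case (10) via Lemma \ref{Galnonrat1} (it contains $cs$) rather than Lemma \ref{Galnonrat2}, an alternative you already note.
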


\begin{proof}

For cases (1), (2), (6), (10) of Proposition \ref{GalClass} the Galois group $\Gamma$ contains the element $cs$. Therefore $X$ is not $\ka$-rational by Lemma \ref{Galnonrat1}. For case (9) of Proposition~\ref{GalClass} the surface $X$ is not $\ka$-rational by Lemma \ref{Galnonrat2}.

For cases (3), (4), (5), (7), (8) of Proposition \ref{GalClass} the Galois group $\Gamma$ is contained in $\langle a, r, s \rangle \cong \CG_3 \times \SG_3$. Therefore $X$ is $\ka$-rational by Lemma \ref{Galrat}.

\end{proof}

\section{Geometric interpretation}

In this section we give geometric interpretation of the actions of elements in the Galois group $\Gamma$ considered in Section $4$.

For convenience we assume that the field $\ka$ contains $\omega$. Therefore we can choose homogeneous coordinates in $\Pro^3_{\ka}$ such that the group $G$ acts as
$$
(x : y : z : t) \mapsto (x : y : \omega z : \omega^2 t)
$$
\noindent on the cubic surface $X$ given by the equation
\begin{equation}
\label{Eqcubic}
P(x : y) + zt(ux + vy) + z^3 + \alpha t^3 = 0
\end{equation}
\noindent where $P(x : y)$ is a homogeneous polynomial of degree $3$.

Let us consider a line $x = y = 0$. This line intersects $X$ in three points $e_1$, $e_2$ and $e_3$ given by the equation
\begin{equation}
\label{EqEckardt}
z^3 + \alpha t^3 = 0.
\end{equation}

\begin{definition}
\label{Eckardtpt}
A point $p$ on a cubic surface is called an \textit{Eckardt point} if there are three $(-1)$-curves passing through $p$.
\end{definition}

\begin{lemma}
\label{X3Eckardt}
The points $e_1$, $e_2$ and $e_3$ are Eckardt points.
\end{lemma}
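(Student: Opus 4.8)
The plan is to compute the tangent plane section of $X$ at each $e_i$ and show it is a union of three lines through $e_i$, which is precisely the Eckardt condition of Definition~\ref{Eckardtpt}.

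First I would write each point as $e_i = (0:0:\zeta:1)$, where $\zeta$ ranges over the three cube roots of $-\alpha$; these are exactly the solutions of \eqref{EqEckardt}, and since $\alpha \neq 0$ for smooth $X$ we have $\zeta \neq 0$. Taking the gradient of the form in \eqref{Eqcubic} at $e_i$, and using that $P$ is homogeneous of degree $3$ in $x,y$ so its partials vanish at $x=y=0$, I obtain the tangent plane
$$T_{e_i} X : \quad ux + vy + 3\zeta z - 3\zeta^2 t = 0,$$
after dividing by $\zeta$ and substituting $\alpha = -\zeta^3$. In particular the gradient is nonzero, so $e_i$ is a smooth point of $X$.

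The decisive step is to restrict the cubic form to this plane. On $T_{e_i}X$ one has $ux+vy = 3\zeta(\zeta t - z)$, and plugging this into \eqref{Eqcubic} the terms in $z$ and $t$ collapse by the identity
$$z^3 - 3\zeta z^2 t + 3\zeta^2 z t^2 - \zeta^3 t^3 = (z-\zeta t)^3,$$
so that the restriction of the defining form equals $P(x,y) + (z-\zeta t)^3$. Since on the plane $z-\zeta t = -(ux+vy)/(3\zeta)$, the section $C = X \cap T_{e_i}X$ is cut out by the binary cubic
$$G(x,y) = P(x,y) + \frac{(ux+vy)^3}{27\alpha},$$
a homogeneous form of degree $3$ in $x,y$ alone. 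Using $[x:y:t]$ as homogeneous coordinates on $T_{e_i}X \cong \Pro^2_{\kka}$ (eliminating $z$ via the linear relation), the equation $G(x,y)=0$ defines three lines, each vanishing at $x=y=0$, i.e. passing through $e_i = [0:0:1]$. These lines are components of $C\subset X$, hence lines on the cubic surface through $e_i$, so $e_i$ is an Eckardt point.

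I expect the collapse of the $(z,t)$-terms into $(z-\zeta t)^3$ to be the crux: it is what turns $C$ into a binary cubic in $x,y$ and thereby forces all three components through $e_i$. The one point to check with some care is that the three lines are genuinely distinct, which I would deduce from the smoothness of $X$ (a repeated factor of $G$ would make the plane tangent to $X$ along a line and force a singular point). Finally, the computation is uniform in the cube root $\zeta$, so it handles $e_1$, $e_2$ and $e_3$ simultaneously; alternatively one notes that $G$ cyclically permutes the three points and that the Eckardt property is preserved under automorphisms.
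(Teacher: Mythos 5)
Your proof is correct and follows essentially the same route as the paper: compute the tangent plane at $e_i$, observe that the $z,t$-part of the cubic collapses to $(z-\zeta t)^3$ on that plane, and conclude that the tangent plane section is the binary cubic \eqref{Eqtangent} in $x,y$, hence a union of three lines through $e_i$. Your additional check that the three lines are distinct (via smoothness of $X$) is a small point the paper leaves implicit, but otherwise the arguments coincide.
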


\begin{proof}
The surface $X$ is given by equation \eqref{Eqcubic} in~$\Pro^3_{\ka}$. In coordinates $x$, $y$, $z$, $t$ the points $e_1$, $e_2$ and $e_3$ are $(0 : 0 : -\sqrt[3]{\alpha} : 1)$, $(0 : 0 : - \omega \sqrt[3]{\alpha} : 1)$ and $(0 : 0 : - \omega^2 \sqrt[3]{\alpha} : 1)$. Consider the tangent plane at the point $e_1$. Its equation is
$$
u\sqrt[3]{\alpha}x + v\sqrt[3]{\alpha}y = 3(\sqrt[3]{\alpha^2}z + \alpha t).
$$
\noindent We have
$$
zt(ux + vy) + z^3 + \alpha t^3 = 3zt(\sqrt[3]{\alpha}z + \sqrt[3]{\alpha^2}t) + z^3 + \alpha t^3=
$$
$$
 = (z + \sqrt[3]{\alpha}t)^3 = \left( \frac{\sqrt[3]{\alpha^{-1}}ux + \sqrt[3]{\alpha^{-1}}vy}{3} \right)^3.
$$
So that equation \eqref{Eqcubic} can be rewritten as
\begin{equation}
\label{Eqtangent}
P(x : y) + \frac{(ux + vy)^3}{27\alpha} = 0.
\end{equation}

The last equation has three roots $(\lambda_1 : \mu_1)$, $(\lambda_2 : \mu_2)$ and $(\lambda_3 : \mu_3)$. The three \mbox{$(-1)$-curves} passing through the point $e_1$ are given by
$$
u\sqrt[3]{\alpha}x + v\sqrt[3]{\alpha}y = 3(\sqrt[3]{\alpha^2}z + \alpha t)
$$
\noindent and $\mu_i x = \lambda_i y$. Similarly we can show that the three $(-1)$-curves passing through $e_2$ are given by
$$
u\sqrt[3]{\alpha}x + v\sqrt[3]{\alpha}y = 3(\omega \sqrt[3]{\alpha^2}z + \omega^2 \alpha t)
$$
\noindent and $\mu_i x = \lambda_i y$, and the three $(-1)$-curves passing through $e_3$ are given by
$$
u\sqrt[3]{\alpha}x + v\sqrt[3]{\alpha}y = 3(\omega^2 \sqrt[3]{\alpha^2}z + \omega \alpha t)
$$
\noindent and $\mu_i x = \lambda_i y$.
\end{proof}

\begin{remark}
\label{ExpNot}
Applying explicit equations given in the proof of Lemma \ref{X3Eckardt} one can see that the $G$-orbit of any $(-1)$-curve passing through a point $e_i$ consists of three $(-1)$-curves meeting each other at a point. The image of $G$ in the Weyl group $W(E_6)$ is conjugate \mbox{to $\langle (123)(456) \rangle$} by Lemma \ref{PicC35}. Therefore nine curves passing through the Eckardt points $e_i$ are $L_{ij}$ where $i \in \{1, 2, 3\}$ and $j \in \{4, 5, 6\}$. We can assume that the curves $L_{14}$, $L_{26}$ and $L_{35}$ pass through $e_1$, the curves $L_{16}$, $L_{25}$ and $L_{34}$ pass through $e_2$ and the curves $L_{15}$, $L_{24}$ and $L_{36}$ pass through $e_3$.
\end{remark}

Now we give explicit geometric interpretation of the action of the group $\pi_1(\Gamma)$.

\begin{lemma}
\label{P1case}
Let $X$ be a cubic surface given by equation \eqref{Eqcubic} and $\Gamma$ be the image of the Galois group $\Gal\left(\kka / \ka\right)$ in the Weyl group $W(E_6)$. Let $\Gamma_1$ and $\Gamma_2$ be the Galois groups of equations \eqref{EqEckardt} and \eqref{Eqtangent} respectively. Then in the notation of Section $4$ one has the following:

\begin{itemize}
\item if $\pi_1(\Gamma)$ is trivial then $\Gamma_1$ and $\Gamma_2$ are trivial;
\item if $\pi_1(\Gamma) = \langle cs \rangle \cong \CG_2$ then $\Gamma_1$ is trivial and $\Gamma_2 \cong \CG_2$;
\item if $\pi_1(\Gamma) = \langle a^2b \rangle \cong \CG_3$ then $\Gamma_1$ is trivial and $\Gamma_2 \cong \CG_3$;
\item if $\pi_1(\Gamma) = \langle ab \rangle \cong \CG_3$ then $\Gamma_1 \cong \CG_3$ and $\Gamma_2$ is trivial;
\item if $\pi_1(\Gamma) = \langle a \rangle \cong \CG_3$ then $\Gamma_1 \cong \CG_3$, $\Gamma_2 \cong \CG_3$ and equations \eqref{EqEckardt} and \eqref{Eqtangent} have the same splitting field;
\item if $\pi_1(\Gamma) = \langle a^2b, cs \rangle \cong \SG_3$ then $\Gamma_1$ is trivial and $\Gamma_2 \cong \SG_3$;
\item if $\pi_1(\Gamma) = \langle ab, cs \rangle \cong \CG_6$ then $\Gamma_1 \cong \CG_3$ and $\Gamma_2 \cong \CG_2$;
\item if $\pi_1(\Gamma) = \langle a, b \rangle \cong \CG_3^2$ then $\Gamma_1 \cong \CG_3$, $\Gamma_2 \cong \CG_3$ and equations \eqref{EqEckardt} and \eqref{Eqtangent} have different splitting fields;
\item if $\pi_1(\Gamma) = \langle a, b, cs \rangle \cong \CG_3^2 \rtimes \CG_2$ then $\Gamma_1 \cong \CG_3$ and $\Gamma_2 \cong \SG_3$.

\end{itemize}

\end{lemma}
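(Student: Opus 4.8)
The plan is to realize both Galois groups $\Gamma_1$ and $\Gamma_2$ as quotient actions of $\pi_1(\Gamma)$ on two natural partitions of the nine $(-1)$-curves $L_{ij}$ with $i \in \{1,2,3\}$ and $j \in \{4,5,6\}$, and then to read off the answer in each of the nine cases from a single $3 \times 3$ incidence table. First I would note that the Galois action on these nine curves factors through $\pi_1(\Gamma)$: by Lemma \ref{W6fixL} the second factor $\SG_3 = \langle r, s \rangle$ of the centralizer fixes every $L_{ij}$, so the induced permutation of the nine curves depends only on $\pi_1(\Gamma) \subseteq \CG_3^2 \rtimes \CG_2$. By Remark \ref{ExpNot} these nine curves are exactly the curves through the Eckardt points $e_1, e_2, e_3$, partitioned into three triples according to the point they meet; these triples are the \emph{rows} of the table. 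Since the $e_i$ are the roots of \eqref{EqEckardt}, the Galois group permutes the rows through $\Gamma_1$, so $\Gamma_1$ is the image of $\pi_1(\Gamma)$ under the permutation representation on the three rows.

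Next I would identify the \emph{columns}, defined as the $G$-orbits of the nine curves, with the three directions $\mu_i x = \lambda_i y$, i.e. the roots of \eqref{Eqtangent}. This is where the explicit equations from the proof of Lemma \ref{X3Eckardt} enter: since $G$ acts by $(x:y:\omega z:\omega^2 t)$, it fixes each linear form $\mu_i x - \lambda_i y$ and only permutes the tangent planes, so each $G$-orbit consists of the three curves sharing one direction, one through each Eckardt point. Hence the Galois group permutes the columns through $\Gamma_2$, and $\Gamma_2$ is the image of $\pi_1(\Gamma)$ under the permutation representation on the three columns. Using the formulas $\sigma E_i = E_{\sigma(i)}$, $\sigma L_{ij} = L_{\sigma(i)\sigma(j)}$, $\sigma Q_i = Q_{\sigma(i)}$, I would fill in the table (each cell being the unique $L_{ij}$ lying in a given row and column) and compute the row/column actions of the generators: both $a = (123)$ and $b = (456)$ act on rows as the same $3$-cycle $\tau$, while on columns they act as mutually inverse $3$-cycles $\sigma$ and $\sigma^{-1}$; and $cs$ fixes every row and acts on columns as a transposition $\kappa$. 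This gives the dictionary $a \mapsto (\tau, \sigma)$, $b \mapsto (\tau, \sigma^{-1})$, $cs \mapsto (\mathrm{id}, \kappa)$, and the identity $ab = (123)(456)$ furnishes a consistency check, since this element must act trivially on columns.

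With this dictionary the nine cases become immediate, $\Gamma_1$ being the image of the listed generators under the row-action and $\Gamma_2$ under the column-action. For instance $\langle a^2 b \rangle$ acts trivially on rows ($\tau^3 = \mathrm{id}$) but as $\langle \sigma \rangle \cong \CG_3$ on columns, whereas $\langle ab \rangle$ acts as $\langle \tau \rangle \cong \CG_3$ on rows and trivially on columns; and $\langle a^2 b, cs \rangle$ and $\langle a, b, cs \rangle$ produce $\langle \sigma, \kappa \rangle \cong \SG_3$ on columns. In the two cases where both groups are $\CG_3$ I would compare the kernels of the two permutation representations. For $\langle a \rangle$ both representations are faithful, so the splitting fields of \eqref{EqEckardt} and \eqref{Eqtangent} both equal the full $\CG_3$-extension and hence coincide; for $\langle a, b \rangle \cong \CG_3^2$ the row-representation $a^i b^j \mapsto \tau^{i+j}$ has kernel $\langle a^2 b \rangle$ while the column-representation $a^i b^j \mapsto \sigma^{i-j}$ has kernel $\langle ab \rangle$, and these distinct kernels force distinct splitting fields.

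The hard part will be the second step: correctly matching the purely combinatorial row/column structure of the nine curves to the two field-theoretic objects, namely proving that the columns are exactly the directions (roots of \eqref{Eqtangent}) and the rows exactly the Eckardt points (roots of \eqref{EqEckardt}). Once this geometric identification is secured, together with the bookkeeping of the incidence table, the remaining case analysis and the comparison of splitting fields are routine.
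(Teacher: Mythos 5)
Your proposal is correct and follows essentially the same route as the paper: both identify $\Gamma_1$ with the induced permutation of the three Eckardt points (your rows) and $\Gamma_2$ with the induced permutation of the three tangent directions $\mu_i x = \lambda_i y$ (your columns, which are exactly the $G$-orbits of the nine $L_{ij}$), and then read off how $a$, $b$ and $cs$ act on each. Your $3\times 3$ incidence table and the kernel comparison for the two splitting-field claims are just a more explicit organization of the same computation the paper carries out.
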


\begin{proof}
Note that the group $\Gamma_1$ permutes the Eckardt points $e_1$, $e_2$ and $e_3$, and the group $\Gamma_2$ permutes three $(-1)$-curves passing through an Eckardt point. In the notation of Remark~\ref{ExpNot} one can see that the elements $a^2b$ and $cs$ of $W(E_6)$ preserve the Eckardt points $e_1$, $e_2$ and $e_3$, and permute the $(-1)$-curves passing through each of them. Thus the availability of elements conjugate to $a^2b$ and $cs$ in $\pi_1(\Gamma)$ is equivalent to the availability of elements of order $3$ and $2$ in $\Gamma_2$ respectively.

The element $ab$ permutes three $(-1)$-curves $E_{14}$, $E_{25}$ and $E_{36}$. These curves lie in a plane given by $\mu_i x = \lambda_i y$, where $(\lambda_i : \mu_i)$ is a root of equation \eqref{Eqtangent}. Similarly, the element $ab$ preserves the other roots of equation \eqref{Eqtangent}. Thus the availability of the element $ab$ in $\pi_1(\Gamma)$ is equivalent to the availability of an element of order $3$ in $\Gamma_1$.

The group $\langle a, b, cs \rangle \cong \CG_3^2 \rtimes \CG_2$ is generated by the elements $ab$, $a^2b$ and $cs$. So for any subgroup of $\langle a, b, cs \rangle$ one can obtain the result of this lemma.

\end{proof}

Now we want to find geometric interpretation of the actions of the elements $r$ and $s$.

Consider the class $L$ in $\Pic(X)$. We have
$$
rL = 4L - \sum \limits_{i=1}^3 E_i - 2\sum \limits_{i=4}^6 E_i, \qquad r^2L = 4L - 2\sum \limits_{i=1}^3 E_i - \sum \limits_{i=4}^6 E_i,
$$
$$
sL = 5L - 2\sum \limits_{i=1}^6 E_i, \qquad srL = 2L - \sum \limits_{i=4}^6 E_i, \qquad sr^2L = 2L - \sum \limits_{i=1}^3 E_i.
$$

The three fixed points of $G$ on $\XX$ lie on the line $z = t = 0$. We denote these points by $q_1$, $q_2$ and $q_3$ given by the equation
\begin{equation}
\label{Eqfixed}
P(x : y) = 0.
\end{equation}
\noindent There are two $G$-invariant hyperplane sections $z = 0$ and $t = 0$ passing through the fixed points of $G$. We denote these sections by $C_1$ and $C_2$.

Let $h: \XX \rightarrow \Pro^2_{\kka}$ be a $G$-equivariant blowup of $\Pro^2_{\kka}$ at six points $p_1$, $p_2$, $p_3$, $p_4$, $p_5$ and $p_6$ and $l$ be a class of line on $\Pro^2_{\kka}$. Then $G$ has three fixed points on $\Pro^2_{\kka}$. For each two of these fixed points there is exactly one $G$-invariant curve passing through these two points that belongs to one of the following six classes: a line, a quadric passing through $p_1$, $p_2$ and~$p_3$, a quadric passing through $p_4$, $p_5$ and $p_6$, a quartic passing through $p_4$, $p_5$ and $p_6$ and having nodes at $p_1$, $p_2$ and $p_3$, a quartic passing through $p_1$, $p_2$ and $p_3$ and having nodes at $p_4$, $p_5$ and $p_6$ or a quintic having nodes at $p_1$, $p_2$, $p_3$, $p_4$, $p_5$ and $p_6$. Proper transforms of these curves on $X$ are $G$-invariant and can be permuted by the group $\Gamma$. Denote these curves by $R_{ij}^K$, where $K$ is a class of curve in $\Pic(\XX)$ and $i$ and $j$ are indices of points $q_i$ and $q_j$, which $R_{ij}^K$ is passing through.

\begin{lemma}
\label{18invariant}
We can choose notation in such way that the following conditions hold:
\begin{itemize}
\item the curve $C_1$ is tangent to the curves $R_{12}^L$, $R_{12}^{rL}$, $R_{12}^{r^2L}$, $R_{13}^{sL}$, $R_{13}^{srL}$, $R_{13}^{sr^2L}$ at the point $q_1$, tangent to the curves $R_{23}^L$, $R_{23}^{rL}$, $R_{23}^{r^2L}$, $R_{12}^{sL}$, $R_{12}^{srL}$, $R_{12}^{sr^2L}$ at the point $q_2$ and tangent to the curves $R_{13}^L$, $R_{13}^{rL}$, $R_{13}^{r^2L}$, $R_{23}^{sL}$, $R_{23}^{srL}$, $R_{23}^{sr^2L}$ at the point $q_3$;
\item the curve $C_2$ is tangent to the curves $R_{13}^L$, $R_{13}^{rL}$, $R_{13}^{r^2L}$, $R_{12}^{sL}$, $R_{12}^{srL}$, $R_{12}^{sr^2L}$ at the point $q_1$, tangent to the curves $R_{12}^L$, $R_{12}^{rL}$, $R_{12}^{r^2L}$, $R_{23}^{sL}$, $R_{23}^{srL}$, $R_{23}^{sr^2L}$ at the point $q_2$ and tangent to the curves $R_{23}^L$, $R_{23}^{rL}$, $R_{23}^{r^2L}$, $R_{13}^{sL}$, $R_{13}^{srL}$, $R_{13}^{sr^2L}$ at the point $q_3$.
\end{itemize}
\end{lemma}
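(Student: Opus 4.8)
The plan is to reduce the whole statement to a computation of $G$-eigenvalues of tangent lines at the fixed points, and then to arrange the bookkeeping by a choice of notation.

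\emph{Local structure at the fixed points.} First I would work at the three fixed points $q_1,q_2,q_3$, which lie on the line $z=t=0$ and are the zeros of $P(x:y)$. By Lemma \ref{DP3C35} the group $G$ acts on each tangent plane $T_{q_i}X$ with the two distinct eigenvalues $\omega$ and $\omega^2$, so there are exactly two $G$-invariant tangent directions at each $q_i$. A direct local computation in the affine chart $x=1$, expanding equation \eqref{Eqcubic}, shows that $C_1=\{z=0\}$ is tangent at each $q_i$ to the $t$-direction, which is the $\omega^2$-eigenline, while $C_2=\{t=0\}$ is tangent to the $z$-direction, the $\omega$-eigenline; this holds uniformly in $i$. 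Hence any $G$-invariant curve through $q_i$ is tangent to exactly one of $C_1,C_2$ there: to $C_1$ if its tangent is the $\omega^2$-eigenline, to $C_2$ if it is the $\omega$-eigenline. The lemma thus reduces to recording, for each of the eighteen curves $R_{ij}^K$, the $G$-eigenvalue of its tangent line at each of its two fixed points.

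\emph{Reduction to one endpoint per curve.} Each $R_{ij}^K$ is a smooth rational $G$-invariant curve: its class lies in the $W(E_6)$-orbit of $L$, so it has self-intersection $1$, satisfies $-K_X\cdot K=3$ and has arithmetic genus $0$, and it meets the $G$-fixed locus $\{q_1,q_2,q_3\}$ in exactly the two points $q_i,q_j$, since a nontrivial $\CG_3$-action on $\Pro^1$ has exactly two fixed points. The eigenvalues of $G$ on the two tangent lines of such a $\Pro^1$ are mutually inverse, so $R_{ij}^K$ carries eigenvalue $\omega^2$ at one endpoint and $\omega$ at the other, i.e. it is tangent to $C_1$ at exactly one of $q_i,q_j$ and to $C_2$ at the other. (This is corroborated by $C_1\cdot R_{ij}^K=3$ and the fact that $C_1$ passes through all three $q_i$: tangency to $C_1$ at both endpoints would force intersection number at least $4$.) This is precisely the assertion that each curve occurs once in the $C_1$-list and once in the $C_2$-list, so it remains only to pin down at which endpoint each curve meets $C_1$ tangentially.

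\emph{Determining the assignment on $\Pro^2$.} Since $q_1,q_2,q_3$ map under $h$ to the three isolated $G$-fixed points of $\Pro^2_{\kka}$ and avoid the exceptional curves, $h$ is a $G$-equivariant isomorphism near them and identifies the eigendirections. Taking $G=\operatorname{diag}(1,\omega,\omega^2)$ on $\Pro^2_{\kka}$, I would represent each of the six classes $L, rL, r^2L, sL, srL, sr^2L$ through each pair of fixed points by a $G$-invariant curve (a coordinate line; an invariant conic through $\{p_1,p_2,p_3\}$ or $\{p_4,p_5,p_6\}$; the two invariant quartics; the invariant quintic), sorting them by the $G$-eigen-character of their defining polynomial, which selects the pair of fixed points the curve passes through, while the incidence with $p_1,\dots,p_6$ selects the class. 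Expanding each equation in local coordinates at a fixed point reads off the tangent eigenvalue. The lines and conics are immediate, and the qualitative output I need is that, through a given pair, the three classes $\{L,rL,r^2L\}$ are tangent to $C_1$ at one endpoint while $\{sL,srL,sr^2L\}$ are tangent to $C_1$ at the opposite endpoint, reflecting that $s$ interchanges the two eigen-characters and hence the two endpoints.

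\emph{Notation and the main obstacle.} Finally I would fix the global labeling: order the roots $q_1,q_2,q_3$ of $P$, identify $C_1$ with $\{z=0\}$, and normalize $r$ and $s$, so that the computed pattern is displayed exactly as in the two bullet lists. The only real obstacle is the third step: carrying out the tangent-eigenvalue computation for the two invariant quartics $rL,r^2L$ and the invariant quintic $sL$ (the lines and conics being routine), and then checking global compatibility, namely that a single ordering of $q_1,q_2,q_3$ simultaneously produces the claimed interlocking tangencies for all eighteen curves across all three pairs, rather than an assignment valid only pair by pair. Once the family-splitting of the third step is established and seen to be coherent across the three pairs, the stated lists follow from the notation choice.
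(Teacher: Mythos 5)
Your first two steps are sound and give a clean alternative to the opening of the paper's argument: identifying the tangent direction of $C_1$ (resp.\ $C_2$) at each $q_i$ with the $\omega^2$- (resp.\ $\omega$-) eigenline of $G$ on $T_{q_i}X$, and using that a nontrivial $\CG_3$-action on a smooth rational $G$-invariant curve has inverse eigenvalues at its two fixed points, correctly shows that each $R_{ij}^K$ is tangent to $C_1$ at exactly one of $q_i,q_j$ and to $C_2$ at the other. (The paper gets the same dichotomy from $C_1\cdot R_{ij}^K=C_2\cdot R_{ij}^K=3$ together with the observation that all intersection points of $G$-invariant curves must be $G$-fixed.)

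The gap is in your third step, which is where the actual content of the lemma lives: the specific interlocking assignment recorded in the two bullet lists. You defer this to an explicit tangent-eigenvalue computation for the invariant quartics and quintic on $\Pro^2_{\kka}$ plus a ``global compatibility'' check, and you yourself flag this as the unresolved obstacle; the only argument you offer in its place --- that $s$ ``interchanges the two eigen-characters and hence the two endpoints'' --- is not a proof, because $s$ is an element of $W(E_6)$ acting on $\Pic(\XX)$, not an automorphism of $X$, so it induces no action on tangent spaces and no a priori relation between the tangent directions of $R_{ij}^K$ and $R_{ij}^{sK}$. The paper closes exactly this gap with two further intersection computations combined with the fixed-point principle: $R_{ij}^{K}\cdot R_{ij}^{csrK}=2$ forces transversality (hence opposite tangent directions, hence opposite $C_1$-endpoints) at both $q_i$ and $q_j$ for such pairs of classes, and $R_{ij}^K\cdot R_{jk}^K=1$ forces transversality at the single shared point $q_j$, linking the three pairs of fixed points into one coherent cyclic pattern. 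These discrete constraints pin down the whole table up to the single global relabeling absorbed by ``we can choose notation,'' with no need to compute tangent lines of quartics or quintics. Without this (or a completed version of your explicit computation, including the verification that the answer is independent of the position of $p_1,\dots,p_6$), the lemma is not proved.
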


\begin{proof}
One has $C_1 \cdot R_{12}^L = C_2 \cdot R_{12}^L = 3$. Note that the curves $C_1$, $C_2$ and $R_{12}^K$ pass through the $G$-fixed points $q_1$ and $q_2$, therefore $R_{12}^L$ can not meet $C_1$ and $C_2$ at any other point since that point should be $G$-invariant. Therefore $R_{12}^L$ is tangent to $C_1$ and $C_2$. The curves $C_1$ and $C_2$ have different tangents at the points $q_i$. Thus we can assume that $R_{12}^L$ is tangent to $C_1$ at $q_1$ and tangent to $C_2$ at $q_2$.

In the same way we can show that for any class
$$
K \in \{L, rL, r^2L, sL, srL, sr^2L\}
$$
\noindent the curve $R_{ij}^{K}$ is tangent to $C_1$ and $C_2$ at points $q_i$ and $q_j$. One has $R_{ij}^{K} \cdot R_{ij}^{csrK} = 2$ therefore these curves meet each other transversally and have different tangents at points $q_i$ and $q_j$. Moreover, one has $R_{ij}^K \cdot R_{jk}^K = 1$ therefore these curves meet each other transversally and have different tangents at point $q_j$. Lemma \ref{18invariant} follows from these two facts.

\end{proof}

Now we give explicit geometric interpretation of the action of the group $\pi_2(\Gamma)$.

\begin{lemma}
\label{Scase}
In the notation of Section $4$ the group $\pi_2(\Gamma)$ contains an element conjugate to $s$ if and only if the Galois group $\Gamma_3$ of equation \eqref{Eqfixed} is of even order.
\end{lemma}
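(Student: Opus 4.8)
The plan is to read the permutation that $\Gamma$ induces on the three fixed points $q_1,q_2,q_3$ of equation \eqref{Eqfixed} off its action on the eighteen auxiliary curves $R_{ij}^K$ together with the two sections $C_1$ and $C_2$. The essential starting observation is that $C_1$ and $C_2$ are cut out on $X$ by the $\ka$-rational hyperplanes $z=0$ and $t=0$, so each of them is defined over $\ka$ and is therefore fixed by the whole group $\Gamma$. Since every $\gamma\in\Gamma$ commutes with $G$ and preserves the intersection form, it sends $R_{ij}^K$ to the unique curve of class $\gamma(K)$ passing through the pair $q_{\gamma(i)},q_{\gamma(j)}$, and it carries tangency with $C_1$ (resp. with $C_2$) to tangency with $C_1$ (resp. with $C_2$). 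This functoriality is what links the Picard action of $\gamma$ to the permutation it induces on the $q_i$.

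Next I would feed in Lemma \ref{18invariant} to make this combinatorial. The six admissible classes split into the two $\langle r\rangle$-families $\{L,rL,r^2L\}$ and $\{sL,srL,sr^2L\}$, and the tangency table of Lemma \ref{18invariant} shows that, for each pair $\{i,j\}$, the first family is tangent to $C_1$ at one of the points $q_i,q_j$ while the second family is tangent to $C_1$ at the other one. Recording, for every edge of the triangle $q_1q_2q_3$, the endpoint at which the first family touches $C_1$ then produces a cyclic orientation of the triangle; the second family produces the opposite orientation. Consequently a $\gamma$ that preserves the partition of the classes into the two families must preserve this orientation, and so acts on $\{q_1,q_2,q_3\}$ by an even permutation, whereas a $\gamma$ that interchanges the two families reverses the orientation and therefore acts by a transposition.

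It then remains to translate the family-interchange condition into the statement about $\pi_2$. The subgroup $\langle r,s\rangle=\pi_2(H)$ acts on the six classes by left multiplication, so one of its elements interchanges the two $\langle r\rangle$-families exactly when it lies in the nontrivial coset $s\langle r\rangle=\{s,sr,sr^2\}$, that is, exactly when it is a conjugate of $s$; on the other hand the generators $a$ and $b$ fix every one of the six classes and hence preserve both families. Putting the two reductions together, $\Gamma$ contains an element inducing an odd permutation of $q_1,q_2,q_3$ — equivalently $\Gamma_3$ is of even order — if and only if $\pi_2(\Gamma)$ contains a conjugate of $s$, which is the assertion.

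The step I expect to be most delicate is the orientation bookkeeping of the second paragraph: one must be certain that the two families really touch $C_1$ at complementary endpoints of each edge, which is precisely the content of Lemma \ref{18invariant}, and that a family-swap genuinely reverses the induced orientation rather than merely permuting it. For the converse direction one also has to control which elements outside the second factor can reverse the orientation while fixing $C_1$ and $C_2$; I would handle this by writing out the action of the first-factor generators on the six classes and the invariant sections, so that the interchange of the two families is seen to be detected exactly in the second factor $\langle r,s\rangle$ and hence by $\pi_2(\Gamma)$.
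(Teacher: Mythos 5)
Your first two paragraphs reproduce, in a cleaner ``orientation'' packaging, exactly the mechanism of the paper's proof: both arguments rest on Lemma \ref{18invariant} together with the rationality of $C_1$ and $C_2$, and both establish that an element of the Galois group induces an odd permutation of $q_1$, $q_2$, $q_3$ if and only if its image in $W(E_6)$ interchanges the two families $\{L, rL, r^2L\}$ and $\{sL, srL, sr^2L\}$ (the paper does this by chasing the single curve $R_{12}^L$ in each direction rather than by your orientation bookkeeping, but the content is the same). Up to that point your argument is sound.

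The gap is in your third paragraph, and it is precisely the point you flagged and deferred: the promised computation does not come out the way you assert. The first factor of $H$ is generated by $a$, $b$ \emph{and} $cs$, and while $a$ and $b$ fix all six classes, $cs$ does not: since $c=(14)(25)(36)$ fixes $L$ and permutes the $E_i$, one has
$$
cs(L)=c\left(5L-2\sum\limits_{i=1}^{6}E_i\right)=5L-2\sum\limits_{i=1}^{6}E_i=sL,
$$
so $cs$ interchanges the two families even though $\pi_2(cs)$ is trivial; dually, $c=(cs)\cdot s$ satisfies $\pi_2(c)=s$ but $c(L)=L$, so it preserves the families. Hence ``interchanges the families'' is the product of two sign characters (the one on $\langle a,b,cs\rangle$ with kernel $\langle a,b\rangle$, and the sign on $\langle r,s\rangle$), not the second one alone, and your claimed equivalence with ``$\pi_2(\Gamma)$ contains a conjugate of $s$'' breaks down whenever $\pi_1(\Gamma)\not\subset\langle a,b\rangle$, e.g.\ for $\Gamma=\langle cs\rangle$ or $\Gamma=\langle c,r\rangle$. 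What your orientation argument actually proves is that $\Gamma_3$ has even order if and only if $\Gamma$ contains a family-interchanging element; translating this into a statement about $\pi_2(\Gamma)$ alone additionally requires controlling $\epsilon_1(\pi_1(\Gamma))$. To be fair, the paper's own proof makes the same silent identification when it passes from ``$h(R_{12}^L)$ is one of $R_{13}^{sL}$, $R_{13}^{srL}$, $R_{13}^{sr^2L}$'' to ``$\pi_2(\Gamma)$ contains an element conjugate to $s$'', so you have faithfully reproduced the argument including its weakest step; but as written your final reduction is not a proof of the stated equivalence, only of the version in which $\pi_2(\Gamma)$ is replaced by the image of $\Gamma$ under the family-interchange character.
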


\begin{proof}
Let the group $\Gamma_3$ contain an element $h$ such that $h(q_2) = q_3$ and $h(q_3) = q_2$. By Lemma \ref{18invariant} the curve $R_{12}^L$ is tangent to $C_2$ at $q_2$. Thus the curve $h\left(R_{12}^L\right)$ is tangent to $h\left(C_2\right) = C_2$ at $q_3$ and passes through $q_1$. Therefore by Lemma \ref{18invariant} the curve $h\left(R_{12}^L\right)$ is $R_{13}^{sL}$, $R_{13}^{srL}$ or $R_{13}^{sr^2L}$. Hence the group $\pi_2(\Gamma)$ contains an element conjugate to $s$.

Now assume that the group $\pi_2(\Gamma)$ contains an element conjugate to $s$. If the Galois group $\Gamma_3$ is of odd order then this element fixes the points $q_1$, $q_2$ and $q_3$. Therefore the curve $R_{12}^L$ is mapped by $s$ to $R_{12}^{sL}$. But $R_{12}^L$ is tangent to $C_1$ at $q_1$ and $R_{12}^{sL}$ is tangent to $C_2$ at $q_2$. This contradiction finishes the proof.

\end{proof}

\begin{lemma}
\label{DP3nonrat}
Let $X$ be a $G$-minimal cubic surface given by equation \eqref{Eqcubic} and the Galois group $\Gamma_3$ of equation \eqref{Eqfixed} is isomorphic to $\CG_2$. Then the quotient $X / G$ is birationally equivalent to a minimal del Pezzo surface $Z$ of degree $4$. In particularly $X / G$ is not $\ka$-rational.
\end{lemma}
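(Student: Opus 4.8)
The plan is to replace $X/G$ by a smooth cubic surface and run the $\ka$-minimal model program on it. First I would apply Lemma \ref{DP3C35} with $N=G$: since $G/N$ is trivial, $X/G \approx Y$ for a del Pezzo surface $Y$ of degree $3$, obtained by resolving the three $A_2$-points lying over the $G$-fixed points $q_1,q_2,q_3$ of \eqref{Eqfixed} and contracting the proper transforms of $f(C_1)$ and $f(C_2)$. By Remark \ref{Eckardtpoints} the surface $Y$ carries two Eckardt points $P_1,P_2$, defined over $\ka$ as the images of the $\ka$-rational sections $C_1,C_2$; through them pass the six $(-1)$-curves coming from the resolutions of $q_1,q_2,q_3$. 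I realize $\overline Y$ in the notation of Remark \ref{DP_1curves} so that these two Eckardt trios are $\{L_{12},L_{34},L_{56}\}$ and $\{L_{35},L_{16},L_{24}\}$, with $L_{12},L_{35}$ the curves over the $\ka$-rational point $q_1$.

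Next I would determine the image of $\Gal(\kka/\ka)$ in $W(E_6)$ acting on $\Pic(\overline Y)$. Because $\Gamma_3\cong\CG_2$, Galois fixes $q_1$ and interchanges the conjugate pair $q_2,q_3$; and since $\omega\in\ka$, Galois commutes with $G$, hence preserves the two eigendirections at each $q_i$ and fixes $C_1,C_2$ individually. Transporting this through the resolution and the two contractions, every Galois element fixes $-K_Y$, $L_{12}$ and $L_{35}$, so it fixes the rank-$3$ lattice $\langle -K_Y,L_{12},L_{35}\rangle$; as $\rho(Y)=3$ by the count $\rho(\widetilde{X/G})=1+4$ coming from $\rho(X)^G=1$ and the four Galois-invariants among the six exceptional curves, the Galois image is the single involution $\tau$ that is $-1$ on the orthogonal complement. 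A direct computation then gives $\tau(\ell)=5\ell-2\sum_k\epsilon_k$, $\tau(\epsilon_i)=Q_{\sigma(i)}$, $\tau(L_{ij})=L_{\sigma(i)\sigma(j)}$ with $\sigma=(12)(35)(46)$; in particular $\tau$ fixes exactly the three lines $L_{12},L_{35},L_{46}$, which form a third tritangent trio $L_{12}+L_{35}+L_{46}\sim -K_Y$, and it interchanges the remaining $24$ lines in $12$ pairs.

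Finally I would contract the $\ka$-rational $(-1)$-curve $L_{46}$ to obtain a smooth del Pezzo surface $Z$ of degree $4$ with $\rho(Z)=2$, and prove that $Z$ is $\ka$-minimal. The $16$ lines of $Z$ are the lines of $Y$ disjoint from $L_{46}$, on which $\tau$ has no fixed line and acts as the eight pairs $(\epsilon_1,Q_2),(\epsilon_2,Q_1),(\epsilon_3,Q_5),(\epsilon_5,Q_3)$ and $(L_{14},L_{26}),(L_{24},L_{16}),(L_{34},L_{56}),(L_{45},L_{36})$; since each pair consists of two lines meeting each other and contraction of $L_{46}$ does not lower intersection numbers of curves disjoint from it, there is no nonempty $\Gal(\kka/\ka)$-invariant set of pairwise disjoint $(-1)$-curves on $Z$. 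By Theorem \ref{GMMP} no further $\ka$-birational contraction is possible, so $Z$ is a minimal del Pezzo surface of degree $4$, and Theorem \ref{ratcrit} with $K_Z^2=4<5$ shows that $Z$, hence $X/G$, is not $\ka$-rational.

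The main obstacle is the content of the second and third paragraphs: pinning down $\tau$ on all $27$ lines of $Y$ by carefully transporting the Galois action through the quotient, resolution and contractions of Lemma \ref{DP3C35} (using the geometric dictionary of this section), and then verifying the combinatorial fact that every $\tau$-swapped pair of lines on $Z$ is non-disjoint. This last fact is exactly what blocks a contraction of $Z$ to a del Pezzo surface of degree $\geq 5$ and is therefore responsible for the non-rationality.
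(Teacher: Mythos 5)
Your route is, in outline, the same as the paper's: pass to the degree-$3$ del Pezzo surface $Y \approx X/G$ via Lemma \ref{DP3C35}, compute $\rho(Y)=3$, identify a Galois involution fixing the tritangent triangle formed by the two lines over the rational fixed point, contract the third line of that triangle, and certify minimality of the resulting degree-$4$ surface. The paper performs the last two steps by looking up the conjugacy class of the involution in Manin's Table~1 (the unique class of order $2$ compatible with $\rho(Y)=3$, for which only one line is contractible), whereas you propose to exhibit the involution explicitly and check the pairing of the $16$ lines by hand; your combinatorics (the permutation $\sigma=(12)(35)(46)$ with $E_i\leftrightarrow Q_{\sigma(i)}$, the eight meeting pairs on $Z$, and the fact that $L_{46}$ is defined over $\ka$ because $L_{12}$ and $L_{35}$ are) is correct.

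The gap is in your second paragraph. From ``every Galois element fixes $-K_Y$, $L_{12}$, $L_{35}$'' together with $\rho(Y)=3$ you conclude that the Galois image on $\Pic(\overline Y)$ \emph{is} the single involution $\tau$ acting as $-1$ on the orthogonal complement. That does not follow: these data determine the invariant sublattice, not the group, and the pointwise stabilizer of a tritangent triangle in $W(E_6)$ is far larger than $\CG_2$. Worse, the claim is false in some of the situations the lemma must cover. For instance, when the image of $\Gal(\kka/\ka)$ on $\Pic(\XX)$ is $\langle r,s\rangle\cong\SG_3$ (the case used in Lemma \ref{X3ratnrat}), the element $r$ cyclically permutes the classes $L, rL, r^2L$ and hence the curves $R^{K}_{ij}$ of Lemma \ref{18invariant}; these descend to $18$ of the $27$ lines of $\overline Y$, so the Galois image on $\Pic(\overline Y)$ contains an element of order $3$ and is not $\langle\tau\rangle$. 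What your minimality argument actually requires is only that the Galois image \emph{contain} $\tau$ (a larger group only makes contraction harder), but establishing even that forces you to track the action of a Galois element inducing the transposition of $q_2,q_3$ on all $27$ lines of $\overline Y$ --- including the $18$ lines coming from the $R^{K}_{ij}$ and checking that a suitable power of it still swaps the exceptional pairs over $q_2$ and $q_3$. This is exactly the computation you set aside as ``the main obstacle,'' and it is the substantive content of the proof rather than a routine verification; as written, the argument that $\tau$ lies in the Galois image is missing.
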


\begin{proof}
The Galois group $\Gamma_3$ of equation \eqref{Eqfixed} is isomorphic to $\CG_2$. Therefore we can assume that the $G$-fixed point $q_1$ is defined over $\ka$ and two other $G$-fixed points $q_2$ and $q_3$ are permuted by $\Gamma_3$.

Let $f: X \rightarrow X / G$ be the quotient morphism and
$$
\pi: \widetilde{X / G} \rightarrow X / G
$$
\noindent be the minimal resolution of singularities.

There are three singular points of type $A_2$ on $X / G$, namely $f(q_1)$, $f(q_2)$ and $f(q_3)$. The curves $C_1$, $C_2$ and the point $q_1$ are defined over $\ka$. Thus the irreducible components of $\pi^{-1} f\left( q_1 \right)$ are defined over $\ka$. The group $\Gamma_3 \cong \CG_2$ maps the irreducible components of~$\pi^{-1} f\left( q_2 \right)$ to the irreducible components of~$\pi^{-1} f\left( q_3 \right)$. Therefore, one has
$$
\rho(\widetilde{X / G}) = \rho(X / G) + 4 = \rho(X)^G + 4 = 5.
$$
\noindent As in the proof of Lemma \ref{DP3C35} two curves $\pi^{-1}_* f (C_i)$ are $(-1)$-curves defined over $\ka$. We can contract this pair and get a del Pezzo surface $Y$ such that $K_Y^2 = 3$ and $\rho(Y) = 3$.

The Galois group $\Gamma_3$ acts on the set of $27$ $(-1)$-curves on $Y$. One cannot contract more than four $(-1)$-curves on $Y$ since $\rho(Y) = 3$. But in Table $1$ in \cite[Chapter~IV, $\S5$]{Man74} there is only one class of elements of order $2$ satisfying this property. This class corresponds to the $11$-th row of the table. For this class one cannot contract more than one $(-1)$-curve on $Y$ (see the second column of the table). Therefore one can contract this curve on $Y$ and get a minimal del Pezzo surface $Z$ of degree $4$ with $\rho(Z) = 2$ admitting a structure of conic bundle. The surface $Z \approx X / G$ is not $\ka$-rational by Theorem \ref{ratcrit}.

\end{proof}

Assume that the $G$-fixed point $q_1$ on the cubic surface $X$ is defined over $\ka$. Then after the change of coordinates this cubic is given by the equation
$$
wx(x^2 - \lambda y^2) + zt(ux + vy) + z^3 + \alpha t^3 = 0.
$$

For this cubic surface the following lemma holds.

\begin{lemma}
\label{Rcase}
In the notation of Section $4$ the group $\pi_2(\Gamma)$ contains $r$ if and only if the Galois group $\Gamma_4$ of the equation
\begin{equation}
\label{Eqfamily}
4 \alpha \mu^3 - \left(u^2 - \frac{v^2}{\lambda}\right) \mu^2 - 2 uw \mu - w^2 = 0.
\end{equation}
\noindent contains an element of order $3$.
\end{lemma}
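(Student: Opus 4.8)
The plan is to reduce the assertion to a statement about how $\Gal(\kka/\ka)$ permutes three canonically defined geometric objects, and then to identify those three objects with the roots of \eqref{Eqfamily}.

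First I would isolate $r$ group-theoretically in a way that is insensitive to $\pi_1(\Gamma)$. Using the class formulas for $rL$, $sL$, $srL$, $sr^2L$ recorded before Lemma \ref{18invariant}, the six classes $L, rL, r^2L, sL, srL, sr^2L$ fall into three pairs $\{L, sL\}$, $\{rL, sr^2L\}$, $\{r^2L, srL\}$, each summing to $-2K_{\XX}$. A direct check (using Proposition \ref{W6center}) shows that the first factor $\CG_3^2 \rtimes \CG_2$ of the centralizer $H$ fixes each of these three pairs, while the second factor $\SG_3 = \langle r, s \rangle$ permutes them as the standard permutation representation, with $r$ acting as a $3$-cycle and $s$ as a transposition. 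Hence the action of $\Gamma$ on the set of three pairs factors through $\pi_2(\Gamma)$ and is faithful on it; in particular $\pi_2(\Gamma)$ contains an element of order $3$, equivalently contains $r$, if and only if $\Gal(\kka/\ka)$ permutes the three pairs with a $3$-cycle. This is the step that cleanly separates the $r$-information from the $\pi_1$-contribution handled separately in Lemma \ref{Scase}.

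Next I would realize the three pairs geometrically. Each pair consists of two classes summing to $-2K_{\XX}$, so it is cut out on $X$ by a $G$-invariant quadric: there are exactly three $G$-invariant quadrics $Q$ for which $X \cap Q$ splits as a union of two twisted cubics lying in the classes of one of the pairs. Working in the normal form $wx(x^2 - \lambda y^2) + zt(ux + vy) + z^3 + \alpha t^3 = 0$ made available by the hypothesis that $q_1$ is $\ka$-rational, I would write the general $G$-invariant quadric as $ax^2 + bxy + cy^2 + dzt$, impose that $X \cap Q$ be reducible in the required way, and eliminate the projective parameters. The reducibility locus is cut out by a single cubic condition in one affine coordinate $\mu$, and the content of the computation is that this cubic is precisely \eqref{Eqfamily}; its three roots $\mu_1, \mu_2, \mu_3$ index the three quadrics, hence the three pairs. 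Since this labelling is canonical and defined over $\ka$, the action of $\Gal(\kka/\ka)$ on $\{\mu_1, \mu_2, \mu_3\}$ through $\Gamma_4$ agrees with its action on the three pairs. Combining with the previous paragraph, $\Gamma_4$ contains an element of order $3$ if and only if $\Gamma$ acts on the three pairs by a $3$-cycle, if and only if $r \in \pi_2(\Gamma)$.

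I expect the main obstacle to be the explicit elimination producing \eqref{Eqfamily}: one must pin down, among the $\Pro^3$ of $G$-invariant quadrics, the sub-locus along which $X \cap Q$ acquires the correct reducible structure, and verify that after the normalisation at $q_1$ it is cut out by exactly the stated cubic, with the coefficients $4\alpha$, $u^2 - v^2/\lambda$, $2uw$, $w^2$ emerging from the normal form rather than merely a cubic with the same splitting field. A secondary point requiring care is the verification that the three quadrics (equivalently the three pairs of classes) are genuinely fixed by the whole first factor of $H$, so that the permutation representation of $\Gal(\kka/\ka)$ on the roots of \eqref{Eqfamily} reflects $\pi_2(\Gamma)$ and nothing more.
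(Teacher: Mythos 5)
Your proposal follows essentially the same route as the paper: the paper likewise identifies the three $G$-invariant pairs of curves through $q_2,q_3$ with classes summing to $-2K_X$, cuts them out by the quadrics $\mu(x^2-\lambda y^2)=zt$, derives \eqref{Eqfamily} as the reducibility condition on $\mu$, and observes that these pairs are cyclically permuted exactly when $r\in\pi_2(\Gamma)$. Your opening paragraph merely makes explicit the group-theoretic fact (that the first factor of $H$ fixes each pair while $\langle r,s\rangle$ acts as the standard $\SG_3$) which the paper states without proof, so the two arguments coincide in substance.
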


\begin{proof}
Note that the divisors $R_{23}^L + R_{23}^{sL}$, $R_{23}^{rL} + R_{23}^{sr^2L}$ and $R_{23}^{r^2L} + R_{23}^{srL}$ are linearly equivalent to $-2K_X$. Therefore these $G$-invariant pairs of curves passing through $q_2$ and $q_3$ are cut from $X$ by the quadric surfaces of the following form
$$
\mu(x^2 - \lambda y^2) = zt.
$$

Let us find reducible members in these family of curves. One has
$$
wx(x^2 - \lambda y^2)t^3 + \mu (ux + vy)(x^2 - \lambda y^2)t^3 + \mu^3(x^2 - \lambda y^2)^3 + \alpha t^6 = 0.
$$
\noindent If the polynomial in the left hand side fo the latter equation is reducible over $\ka(x, y, t)$ then it factorizes in the following way
$$
\left(A(x - y\sqrt{\lambda})(x^2 - \lambda y^2) + \sqrt{\alpha} t^3\right)\left(B(x + y\sqrt{\lambda})(x^2 - \lambda y^2) + \sqrt{\alpha} t^3\right) = 0
$$
\noindent and therefore we have $AB = \mu^3$ and
$$
A(x - y\sqrt{\lambda})\sqrt{\alpha} + B(x + y\sqrt{\lambda})\sqrt{\alpha} = \mu (ux + vy) + wx.
$$
Therefore the following system of equations holds
$$
\begin{cases}
(A + B)\sqrt{\alpha} = \mu u + w, \\
(B - A)\sqrt{\lambda \alpha} = \mu v.
\end{cases}
$$
Solving this system one has
$$
A = \frac{(\mu u + w)\sqrt{\lambda} - \mu v}{2\sqrt{\lambda \alpha}}, \qquad B = \frac{(\mu u + w)\sqrt{\lambda} + \mu v}{2\sqrt{\lambda \alpha}}.
$$
Since $AB = \mu^3$, the reducible members of the linear system $|-2K_X|$ passing through $q_2$ and $q_3$ are given by equation \eqref{Eqfamily}.

The roots of this equation correspond to the pairs of curves $R_{23}^L$ and $R_{23}^{sL}$, $R_{23}^{rL}$ and $R_{23}^{sr^2L}$, $R_{23}^{r^2L}$ and $R_{23}^{srL}$ which are cyclically permuted by $\Gamma$ if and only if the group $\pi_2(\Gamma)$ contains $r$.
\end{proof}

\begin{remark}
At the beginning of this section we assume that the field $\ka$ contains $\omega$. For any field $\ka$ the action of a generator of $G$ can be written as
$$
(x : y : z : t) \mapsto (y : z : x : t).
$$
\noindent One can remake the computations (which is much more complicated) for this action. Then Lemmas \ref{X3Eckardt}, \ref{P1case}, \ref{18invariant} and \ref{Rcase} hold. But Lemma \ref{Scase} does not hold since the curves $C_1$ and $C_2$ are not defined over $\ka$.
\end{remark}

\section{Examples}

In this section we construct explicit examples of quotients of del Pezzo surfaces of degree~$3$ by a group $G \cong \CG_3$ acting as in type $5$ of Table~\ref{table2}. We use the notation of Section $5$.

\begin{lemma}
\label{X3ratrat}
Let $X$ be a cubic surface given by equation \eqref{Eqcubic}. Suppose that the Galois groups $\Gamma_1$, $\Gamma_2$, $\Gamma_3$ of equations \eqref{EqEckardt}, \eqref{Eqtangent}, \eqref{Eqfixed} are trivial and the Galois group $\Gamma_4$ of equation \eqref{Eqfamily} contains an element of order $3$. Then the surface $X$ is $G$-minimal and $\ka$-rational, and the quotient $X / G$ is also $\ka$-rational.
\end{lemma}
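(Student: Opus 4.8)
The plan is to first determine the image $\Gamma$ of the Galois group in $W(E_6)$ completely, and then to analyse the quotient via Lemma \ref{DP3C35}. I would combine the three geometric dictionaries already proved. Since $\Gamma_1$ and $\Gamma_2$ are trivial, Lemma \ref{P1case} forces $\pi_1(\Gamma)$ to be trivial, as this is its only case with both $\Gamma_1$ and $\Gamma_2$ trivial; hence $\Gamma$ lies in the factor $\langle r, s \rangle \cong \SG_3$. Since $\Gamma_3$ is trivial and so of odd order, Lemma \ref{Scase} shows that $\pi_2(\Gamma)$ contains no element conjugate to $s$, while the order-$3$ element in $\Gamma_4$ forces $r \in \pi_2(\Gamma)$ by Lemma \ref{Rcase} (note that $q_1$ is defined over $\ka$ since $\Gamma_3$ is trivial, so that lemma applies). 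The only subgroup of $\SG_3$ containing $r$ but no conjugate of $s$ is $\langle r \rangle$, so $\Gamma = \langle r \rangle \cong \CG_3$. This is case (3) of Proposition \ref{GalClass}, giving $\rho(X) > 1$ and $\rho(X)^G = 1$; thus $X$ is $G$-minimal by Theorem \ref{Minclass}, and $X$ is $\ka$-rational by Lemma \ref{Galrat}, since $\langle r \rangle \subset \langle a, r, s \rangle$.

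For the rationality of $X / G$ I would follow the construction of Lemma \ref{DP3C35} with $N = G$. Let $f \colon X \to X / G$ be the quotient morphism and $\pi \colon \widetilde{X / G} \to X / G$ the minimal resolution. The three $G$-fixed points $q_1$, $q_2$, $q_3$ become three $A_2$ singularities through each of which both $f(C_1)$ and $f(C_2)$ pass, and contracting the two disjoint $(-1)$-curves $\pi^{-1}_* f(C_1)$, $\pi^{-1}_* f(C_2)$ yields a smooth cubic surface $Y \approx X / G$ with $K_Y^2 = 3$. The heart of the argument is to compute $\rho(Y)$ over $\ka$. Because $X$ is $G$-minimal one has $\rho(X / G) = \rho(X)^G = 1$. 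Since $\Gamma_3$ is trivial each $q_i$ is defined over $\ka$, and since the two sections $C_1 = \{ z = 0 \}$ and $C_2 = \{ t = 0 \}$ are distinct and defined over $\ka$, the group $\Gamma$ fixes each of them and hence fixes the two eigendirections of $G$ at every $q_i$. Consequently both exceptional $(-2)$-curves over each $A_2$ point are defined over $\ka$, so the resolution raises the Picard number over $\ka$ by $6$ and $\rho(\widetilde{X / G}) = 7$; contracting the two $\ka$-rational curves $\pi^{-1}_* f(C_j)$ then gives $\rho(Y) = 5$.

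To finish I would invoke the numerical criterion. The two Eckardt points of $Y$ are the images of the $\ka$-rational curves $\pi^{-1}_* f(C_j)$, so $Y(\ka) \ne \varnothing$, and
$$
\rho(Y) + K_Y^2 = 5 + 3 = 8 \geqslant 7.
$$
Hence $Y \approx X / G$ is $\ka$-rational by Corollary \ref{piccrit}.

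The step I expect to be the main obstacle is the computation of $\rho(Y)$, specifically showing that all six exceptional $(-2)$-curves over the three $A_2$ points descend to $\ka$. This is precisely where the hypotheses ``$\Gamma_3$ trivial'' (rationality of the fixed points $q_i$) and ``$\Gamma = \langle r \rangle$'' (which fixes each of the $\ka$-rational sections $C_1$, $C_2$ rather than interchanging them) are used; once the Galois action on the exceptional locus is under control, the remaining steps are the bookkeeping of Lemma \ref{DP3C35} together with the numerical criterion of Corollary \ref{piccrit}.
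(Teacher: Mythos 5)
Your proposal is correct and follows essentially the same route as the paper: it pins down $\Gamma=\langle r\rangle$ via Lemmas \ref{P1case}, \ref{Scase}, \ref{Rcase}, deduces $G$-minimality and $\ka$-rationality of $X$ from Proposition \ref{GalClass} and Lemma \ref{Galrat}, and then gets $\rho(\widetilde{X/G})=7$ from the rationality of the $q_i$ and concludes by Corollary \ref{piccrit}. The only (immaterial) differences are that the paper applies Corollary \ref{piccrit} directly to $\widetilde{X/G}$ rather than first contracting the two curves $\pi^{-1}_*f(C_j)$, and cites Corollary \ref{Galmin4} for $G$-minimality where you argue from $\rho(X)^G=1$.
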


\begin{proof}
The group $\Gamma_1$ is trivial. Therefore $X(\ka)$ contains the points $e_1$, $e_2$ and $e_3$.

By Lemmas \ref{P1case}, \ref{Scase} and \ref{Rcase} the group $\Gamma$ is conjugate to $\langle r \rangle$. Therefore one can Galois equivariantly contract the curves $E_1$, $L_{23}$ and $Q_1$ and get a del Pezzo surface of degree $6$ which is $\ka$-rational by Theorem \ref{ratcrit}.

The image of the group $G$ in the Weyl group $W(E_6)$ is $\langle ab \rangle$ thus $X$ is $G$-minimal by Corollary \ref{Galmin4}.

Let $f: X \rightarrow X / G$ be the quotient morphism and
$$
\pi: \widetilde{X / G} \rightarrow X / G
$$
\noindent be the minimal resolution of singularities. The group $\Gamma_3$ is trivial. Therefore the points $q_1$, $q_2$ and $q_3$ are defined over $\ka$. Thus $\rho(\widetilde{X / G}) = 7$, and $\widetilde{X / G}$ and $X / G$ are $\ka$-rational by Corollary \ref{piccrit}.

\end{proof}

\begin{example}
\label{exratrat}
If the field $\ka$ contains $\omega$ and an element $\nu$ such that $\sqrt[3]{\nu} \notin \ka$ then the cubic surface given by the equation
$$
2\nu x(x^2-y^2) +z^3 + t^3 = 0
$$
\noindent satisfies the conditions of Lemma \ref{X3ratrat}.
\end{example}

\begin{lemma}
\label{X3ratnrat}
Let $X$ be a cubic surface given by equation \eqref{Eqcubic}. Suppose that the Galois groups $\Gamma_1$, $\Gamma_2$ of equations \eqref{EqEckardt}, \eqref{Eqtangent} are trivial, the Galois group $\Gamma_3$ of equation \eqref{Eqfixed} is isomorphic to $\CG_2$ and the Galois group $\Gamma_4$ of equation \eqref{Eqfamily} contains an element of order $3$. Then the surface $X$ is $G$-minimal and $\ka$-rational, and the quotient $X / G$ is not $\ka$-rational.
\end{lemma}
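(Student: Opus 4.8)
The plan is to convert the three hypotheses on the auxiliary Galois groups into constraints on the projections $\pi_1(\Gamma)$ and $\pi_2(\Gamma)$, pin down $\Gamma$ exactly, and then harvest the three conclusions from the results of Sections~4 and~5. First I would use triviality of $\Gamma_1$ and $\Gamma_2$: scanning the list in Lemma~\ref{P1case}, the only entry producing both $\Gamma_1$ and $\Gamma_2$ trivial is the trivial one, so $\pi_1(\Gamma)$ is trivial. Next, since $\Gamma_3 \cong \CG_2$ has even order, Lemma~\ref{Scase} gives that $\pi_2(\Gamma)$ contains an element conjugate to $s$, while the order-$3$ element of $\Gamma_4$ forces $r \in \pi_2(\Gamma)$ by Lemma~\ref{Rcase}. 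As $\pi_2(\Gamma) \subseteq \SG_3 = \langle r, s \rangle$ and any order-$2$ element of $\SG_3$ together with $r$ generates $\SG_3$, we get $\pi_2(\Gamma) = \langle r, s \rangle$. Combined with $\pi_1(\Gamma)$ trivial, this shows $\Gamma$ lies in the second factor and equals it, so $\Gamma = \langle r, s \rangle \cong \SG_3$; this is case~(7) of Proposition~\ref{GalClass}, equivalently case~(4) of Theorem~\ref{Galcrit}. Note the sole difference from the setting of Lemma~\ref{X3ratrat} is that $\Gamma_3 \cong \CG_2$ rather than trivial, which is exactly what adjoins the element $s$ and enlarges $\Gamma$ from $\langle r \rangle$ to $\langle r, s \rangle$.

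With $\Gamma$ identified I would read off $G$-minimality and $\ka$-rationality of $X$. The image of $G$ in $W(E_6)$ is $\langle ab \rangle$, so the joint group $\langle \Gamma, ab \rangle$ contains $abr$; by Lemma~\ref{Galmin3} this element leaves invariant only a rank-one sublattice, whence $\rho(X)^G = 1$ and $X$ is $G$-minimal (this is precisely what the final part of the proof of Proposition~\ref{GalClass} records in case~(7)). Since $\Gamma = \langle r, s \rangle \subset \langle a, r, s \rangle$, Lemma~\ref{Galrat} gives that $X$ is $\ka$-rational; here I would also observe that triviality of $\Gamma_1$ already places the Eckardt points $e_1, e_2, e_3$ of equation~\eqref{EqEckardt} in $X(\ka)$, so the rational-point hypothesis underlying Theorem~\ref{ratcrit} is satisfied.

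Finally, for the non-rationality of $X / G$ I would appeal directly to Lemma~\ref{DP3nonrat}: $X$ is a $G$-minimal cubic surface given by~\eqref{Eqcubic} with $\Gamma_3 \cong \CG_2$, which are literally the hypotheses of that lemma, so $X / G$ is birationally equivalent to a minimal del Pezzo surface of degree~$4$ admitting a conic-bundle structure and is therefore not $\ka$-rational by Theorem~\ref{ratcrit}. Thus the whole argument is an assembly of the geometric dictionary of Section~5 with the minimality analysis of Section~4. The genuine content—and the step that would be the real obstacle were it not already dispatched—is buried in Lemma~\ref{DP3nonrat}, where the Manin table of conjugacy classes in $W(E_6)$ is invoked to show the degree-$4$ reduction is minimal with $\rho = 2$; at the present level all I must verify is that the hypotheses of that lemma are in force, which the preceding paragraphs confirm.
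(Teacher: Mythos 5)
Your proposal is correct and follows essentially the same route as the paper: identify $\Gamma$ as $\langle r,s\rangle$ via Lemmas \ref{P1case}, \ref{Scase} and \ref{Rcase}, deduce $G$-minimality from the $abr$-invariants (the paper cites Corollary \ref{Galmin4}, which is just the packaged form of Lemma \ref{Galmin3} that you invoke), get $\ka$-rationality of $X$ by contracting the $\Gamma$-invariant triple $\{E_i, L_{jk}, Q_i\}$ (the content of Lemma \ref{Galrat}), and conclude non-rationality of $X/G$ from Lemma \ref{DP3nonrat}. No gaps.
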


\begin{proof}
The group $\Gamma_1$ is trivial. Therefore $X(\ka)$ contains the points $e_1$, $e_2$ and $e_3$.

By Lemmas \ref{P1case}, \ref{Scase} and \ref{Rcase} the group $\Gamma$ is conjugate to $\langle r, s \rangle$. Therefore one can Galois equivariantly contract the curves $E_1$, $L_{23}$ and $Q_1$ and get a del Pezzo surface of degree $6$ which is $\ka$-rational by Theorem \ref{ratcrit}.

The image of the group $G$ in the Weyl group $W(E_6)$ is $\langle ab \rangle$ thus $X$ is $G$-minimal by Corollary \ref{Galmin4}.

The quotient $X / G$ is not $\ka$-rational by Lemma \ref{DP3nonrat}.

\end{proof}

\begin{example}
\label{exratnrat}
Suppose that the field $\ka$ contains $\omega$, and does not contain $\sqrt{2}$ and any root of the equation
$$
4\mu^3 - 9\mu^2 -6\mu - 1 = 0.
$$
\noindent Then the cubic surface given by the equation
$$
x(x^2 - 2y^2) + 3xzt + z^3 + t^3 = 0
$$
\noindent satisfies the conditions of Lemma \ref{X3ratnrat}.
\end{example}

\begin{lemma}
\label{X3nratrat}
Let $X$ be a cubic surface given by equation \eqref{Eqcubic}. Suppose that the Galois groups $\Gamma_1$ and $\Gamma_3$ of equations \eqref{EqEckardt} and \eqref{Eqfixed} are trivial, the Galois group $\Gamma_2$ of equation~\eqref{Eqtangent} is isomorphic to $\CG_2$ and the Galois group $\Gamma_4$ of equation \eqref{Eqfamily} contains an element of order $3$. Then the surface $X$ is $G$-minimal and not $\ka$-rational, and the quotient $X / G$ is $\ka$-rational.
\end{lemma}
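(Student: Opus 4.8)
The plan is to proceed exactly along the lines of Lemmas \ref{X3ratrat} and \ref{X3ratnrat}: first identify the image $\Gamma$ of the Galois group in $W(E_6)$ from the four field-theoretic hypotheses, then read off $G$-minimality and the non-rationality of $X$ from Section~$4$, and finally deduce rationality of the quotient by resolving $X/G$ and invoking Corollary~\ref{piccrit}.

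First I would determine $\Gamma$. Since $\Gamma_1$ is trivial and $\Gamma_2 \cong \CG_2$, Lemma~\ref{P1case} gives $\pi_1(\Gamma) = \langle cs \rangle \cong \CG_2$. Since $\Gamma_3$ is trivial, and hence of odd order, Lemma~\ref{Scase} shows $s \notin \pi_2(\Gamma)$; since $\Gamma_4$ contains an element of order $3$, Lemma~\ref{Rcase} gives $r \in \pi_2(\Gamma)$. As $\langle r \rangle$ is the only subgroup of $\SG_3$ that contains $r$ but no conjugate of $s$, we obtain $\pi_2(\Gamma) = \langle r \rangle$, and because $cs$ and $r$ lie in the two distinct direct factors of $H$ we conclude $\Gamma = \langle cs, r \rangle \cong \CG_6$, which is case~(6) of Proposition~\ref{GalClass}. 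The triviality of $\Gamma_1$ also guarantees that $e_1$, $e_2$, $e_3 \in X(\ka)$, so $X(\ka) \ne \varnothing$.

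Next I would settle the two assertions about $X$ itself, both of which are immediate from Section~$4$. Since the image of $G$ in $W(E_6)$ is $\langle ab \rangle$ and $r \in \pi_2(\Gamma)$, the surface $X$ is $G$-minimal by Corollary~\ref{Galmin4} (the group generated by the Galois image and the image of $G$ contains the element $abr$, which has type~$3$, so the invariant Picard number drops to $1$). Because $\Gamma$ contains $cs$, Lemma~\ref{Galnonrat1} shows that $X$ is not $\ka$-rational. For the rationality of the quotient I would then repeat the computation in the proof of Lemma~\ref{X3ratrat}: letting $f\colon X \to X/G$ be the quotient morphism and $\pi\colon \widetilde{X/G} \to X/G$ the minimal resolution, the three singular points $f(q_i)$ are of type $\frac{1}{3}(1,2)$ and contribute six exceptional $(-2)$-curves. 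Since $\Gamma_3$ is trivial the points $q_1$, $q_2$, $q_3$ are defined over $\ka$, and since the curves $C_1 = \{z = 0\}$ and $C_2 = \{t = 0\}$ are defined over $\ka$ as well, all six exceptional curves are defined over $\ka$; hence $\rho(\widetilde{X/G}) = \rho(X)^G + 6 = 7$. As $K_{\widetilde{X/G}}^2 = 1$ and $X/G$ carries the smooth $\ka$-point $f(e_1)$, Corollary~\ref{piccrit} yields that $\widetilde{X/G} \approx X/G$ is $\ka$-rational.

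The only step that genuinely requires care — and the \emph{main (if modest) obstacle} — is the identity $\rho(\widetilde{X/G}) = 7$: one must check that the extra Galois element $cs$, which acts nontrivially on most of the $(-1)$-curves of $X$, nonetheless fixes each of the six exceptional curves, so that the invariant Picard number is not lowered below the value found in Lemma~\ref{X3ratrat}. This comes down to the fact that $C_1$ and $C_2$ are individually defined over $\ka$, so the Galois element realizing $cs$ cannot interchange the two branches $C_1$, $C_2$ at any fixed point $q_i$ and therefore fixes each component of $\pi^{-1} f(q_i)$.
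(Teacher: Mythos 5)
Your overall architecture matches the paper's (identify $\Gamma$ via Lemmas \ref{P1case}, \ref{Scase}, \ref{Rcase}; get $G$-minimality from Corollary \ref{Galmin4}; resolve the three $A_2$ points and apply Corollary \ref{piccrit}), and your treatment of the quotient is correct and agrees with the paper. But your identification of $\Gamma$ is wrong: you get $\langle cs, r\rangle \cong \CG_6$ (case (6) of Proposition \ref{GalClass}), whereas the paper's proof states, correctly, that $\Gamma$ is conjugate to $\langle c, r\rangle \cong \SG_3$ (case (9)); these groups are not even isomorphic. The false step is the deduction ``$\Gamma_3$ trivial $\Rightarrow s \notin \pi_2(\Gamma)$'' followed by the direct-product argument. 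The ``only if'' direction of Lemma \ref{Scase}, read literally, does give this, but it breaks down for elements whose $\pi_1$-part has a nontrivial $cs$-component --- which is exactly the situation here, since $\Gamma_2 \cong \CG_2$ forces $\pi_1(\Gamma) = \langle cs\rangle$. Concretely, take $\gamma \in \Gamma$ with $\pi_1(\gamma) \in \langle a,b\rangle cs$. Since $\Gamma_3$ is trivial and $C_1 = \{z=0\}$, $C_2 = \{t=0\}$ are defined over $\ka$, $\gamma$ must send $R_{12}^L$ (tangent to $C_1$ at $q_1$) to one of $R_{12}^L$, $R_{12}^{rL}$, $R_{12}^{r^2L}$. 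But $cs(L) = sL$ while $a$, $b$ fix $L$, so the $\pi_1$-part already shifts the class $L$ by $s$; hence the $\pi_2$-part of $\gamma$ must lie in $\langle r\rangle s$, i.e.\ $s \in \pi_2(\Gamma)$ after all, and $\gamma$ is conjugate to $c = (cs)\cdot s$, not to $cs$. Together with $r \in \pi_2(\Gamma)$ this gives $\Gamma = \langle c, r\rangle$. One can also see this on Example \ref{exnratrat}: the discriminant of equation \eqref{Eqfamily} there is $-864 = 144\cdot(-6)$, so the quadratic subfield of its splitting field over $\Q(\omega)$ is $\Q(\omega,\sqrt 2)$ --- the same field that splits equation \eqref{Eqtangent} --- whence $\Gamma_4 \cong \SG_3$ and $\pi_2(\Gamma) \cong \SG_3$.

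This error propagates to one of the three conclusions: your justification that $X$ is not $\ka$-rational invokes Lemma \ref{Galnonrat1}, which requires $cs \in \Gamma$; but $cs \notin \langle c, r\rangle$, because $\pi_2$ is injective on $\langle c, r\rangle$ while $\pi_2(cs)$ is trivial. The correct reference is Lemma \ref{Galnonrat2}, which applies since $\Gamma$ contains $c$ and $r$ --- this is what the paper does. The remaining parts of your argument ($e_i \in X(\ka)$, $G$-minimality via Corollary \ref{Galmin4}, the count $\rho(\widetilde{X/G}) = 7$ because the $q_i$ and the two branches $C_1$, $C_2$ are individually defined over $\ka$, and the appeal to Corollary \ref{piccrit} with $K_{\widetilde{X/G}}^2 = 1$) are sound and do not depend on which of the two candidate groups $\Gamma$ is.
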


\begin{proof}
The group $\Gamma_1$ is trivial. Therefore $X(\ka)$ contains the points $e_1$, $e_2$ and $e_3$.

By Lemmas \ref{P1case}, \ref{Scase} and \ref{Rcase} the group $\Gamma$ is conjugate to $\langle c, r \rangle$. Therefore $X$ is not $\ka$-rational by Lemma \ref{Galnonrat2}.

The image of the group $G$ in the Weyl group $W(E_6)$ is $\langle ab \rangle$ thus $X$ is $G$-minimal by Corollary \ref{Galmin4}.

Let $f: X \rightarrow X / G$ be the quotient morphism and
$$
\pi: \widetilde{X / G} \rightarrow X / G
$$
\noindent be the minimal resolution of singularities. The group $\Gamma_3$ is trivial. Therefore the points $q_1$, $q_2$ and $q_3$ are defined over $\ka$. Thus $\rho(\widetilde{X / G}) = 7$, and $\widetilde{X / G}$ and $X / G$ are $\ka$-rational by Corollary \ref{piccrit}.

\end{proof}

\begin{example}
\label{exnratrat}
In the assumptions of Example \ref{exratnrat} the cubic surface given by the equation
$$
x(x^2 - y^2) + 3xzt + z^3 + t^3 = 0
$$
\noindent satisfies the conditions of Lemma \ref{X3nratrat}.
\end{example}

\begin{lemma}
\label{X3nratnrat}
Let $X$ be a cubic surface given by equation \eqref{Eqcubic}. Suppose that the Galois group $\Gamma_1$ of equation \eqref{EqEckardt} is trivial, the Galois group $\Gamma_2$ of equation \eqref{Eqtangent} is isomorphic to $\CG_2$ and the Galois group $\Gamma_3$ of equation \eqref{Eqfixed} is isomorphic to $\CG_2$. Then the surface $X$ is $G$-minimal and not $\ka$-rational, and the quotient $X / G$ is also not $\ka$-rational.
\end{lemma}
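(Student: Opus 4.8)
The plan is to follow the scheme of the three preceding lemmas: extract $\ka$-points from the splitting of equation \eqref{EqEckardt}, read off the conjugacy class of $\Gamma$ in $H \cong \left( \CG_3^2 \rtimes \CG_2 \right) \times \SG_3$ from the Galois groups of the auxiliary equations, and then combine the minimality and rationality criteria of Section~$4$ with Lemma \ref{DP3nonrat}. First, since $\Gamma_1$ is trivial, equation \eqref{EqEckardt} splits over $\ka$, so the three Eckardt points $e_1$, $e_2$, $e_3$ lie in $X(\ka)$; in particular $X(\ka) \ne \varnothing$.

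Next I would identify $\Gamma$. By Lemma \ref{P1case} the assumptions that $\Gamma_1$ is trivial and $\Gamma_2 \cong \CG_2$ force $\pi_1(\Gamma) = \langle cs \rangle$, and by Lemma \ref{Scase} the assumption $\Gamma_3 \cong \CG_2$ (of even order) forces $\pi_2(\Gamma)$ to contain an element conjugate to $s$. Because $\Gamma_3 \cong \CG_2$ has order exactly $2$, the induced action on the three fixed points $q_1$, $q_2$, $q_3$ is a single transposition, so $\pi_2(\Gamma) = \langle s \rangle$ and $r \notin \pi_2(\Gamma)$; by Lemma \ref{Rcase} this already means $\Gamma_4$ carries no element of order $3$, which explains why no separate hypothesis on $\Gamma_4$ is imposed. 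Hence $\Gamma$ is conjugate to $\langle c, s \rangle \cong \CG_2^2$, the case $(2)$ of Proposition \ref{GalClass}; in particular $\Gamma$ contains the element $cs$.

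Granting this, both statements about $X$ follow quickly. The surface $X$ is not $\ka$-rational by Lemma \ref{Galnonrat1}, since $\Gamma$ contains $cs$. For $G$-minimality I would use that the image of $G$ in $W(E_6)$ is $\langle ab \rangle$, so that $\rho(X)^G = \operatorname{rk} \Pic(\XX)^{\langle \Gamma, ab \rangle}$; since $\langle \Gamma, ab \rangle \supseteq \langle abcs \rangle$ and $\operatorname{rk} \Pic(\XX)^{\langle abcs \rangle} = 1$ by Lemma \ref{Galmin1}, while this invariant lattice always contains $K_X$, we obtain $\rho(X)^G = 1$. Note that here, in contrast to the three previous lemmas, $r \notin \pi_2(\Gamma)$, so Corollary \ref{Galmin4} is unavailable and one genuinely needs Lemma \ref{Galmin1} via the element $cs$.

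Finally, $X$ is now a $G$-minimal cubic surface of the form \eqref{Eqcubic} with $\Gamma_3 \cong \CG_2$, so Lemma \ref{DP3nonrat} applies and shows that $X / G$ is $\ka$-birational to a minimal del Pezzo surface of degree $4$ carrying a conic bundle structure, which is not $\ka$-rational by Theorem \ref{ratcrit}. The step I expect to be the main obstacle is the precise identification of $\Gamma$: the data $\Gamma_1$, $\Gamma_2$, $\Gamma_3$ determine $\pi_1(\Gamma)$ and the presence of an involution in $\pi_2(\Gamma)$, but one must confirm that the surviving element is exactly $cs$ (so that both Lemma \ref{Galnonrat1} and Lemma \ref{Galmin1} apply) rather than a diagonal involution coupling the $e$-side and $q$-side; the clean argument requires the quadratic splitting fields of \eqref{Eqtangent} and \eqref{Eqfixed} to be distinct, which is what guarantees $\Gamma \cong \CG_2^2$ and hence $\rho(X)^G = 1$.
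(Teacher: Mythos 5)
Your overall architecture matches the paper's: extract $\ka$-points from $\Gamma_1$ being trivial, show $cs \in \Gamma$, deduce non-$\ka$-rationality from Lemma \ref{Galnonrat1} and $G$-minimality from Lemma \ref{Galmin1} applied to $\langle \Gamma, ab\rangle \ni abcs$, and finish with Lemma \ref{DP3nonrat}. The gap is exactly the step you flag yourself: you never prove that the involution in $\Gamma$ is $cs$ rather than the diagonal element $c = cs\cdot s$. This is not a cosmetic point: if $\Gamma = \langle c \rangle$, then $\langle G, \Gamma\rangle = \langle ab, c\rangle$ acts on the indices as a transitive cyclic group of order $6$ and preserves $\sum_{i=1}^{6} E_i$, so $\rho(X)^G = 2$ and $X$ is \emph{not} $G$-minimal — the conclusion of the lemma would fail, not merely its proof. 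Your proposed repair, that the quadratic splitting fields of \eqref{Eqtangent} and \eqref{Eqfixed} be distinct, is not among the hypotheses and is in fact violated by the paper's own Example \ref{exnratnrat}, where both fields equal $\ka\left(\sqrt{\lambda}\right)$. The correct exclusion of $c$ is geometric and sits inside the proofs of Lemmas \ref{18invariant} and \ref{Scase}: an element $h$ of the Galois group transposing $q_2$ and $q_3$ must send $R_{12}^{L}$ to one of $R_{13}^{sL}$, $R_{13}^{srL}$, $R_{13}^{sr^2L}$, hence sends the class $L$ into $\{sL, srL, sr^2L\}$, whereas the diagonal involution $c$ fixes $L$. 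Since $\pi_1(\Gamma) = \langle cs\rangle$ and $cs$ sends $L$ to $sL$, the image of $h$ must lie in $cs\cdot\langle r\rangle$ up to conjugation, and its cube is $cs$; this is what the paper compresses into the joint citation of Lemmas \ref{P1case} and \ref{Scase}.

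A secondary, harmless error: you infer $r \notin \pi_2(\Gamma)$, hence $\Gamma \cong \CG_2^2$ exactly and $\Gamma_4$ has no element of order $3$, from $\Gamma_3 \cong \CG_2$. This is not valid — $\pi_2(\Gamma)$ does not act faithfully on $\{q_1, q_2, q_3\}$ (an element mapping to $r$ can permute the curves $R_{23}^{K}$ while fixing all three points), and by Lemma \ref{Rcase} the presence of $r$ is governed by $\Gamma_4$, on which the lemma imposes no hypothesis. So $\Gamma$ may well be $\langle cs, r\rangle$ or $\langle cs\rangle \times \SG_3$. Fortunately the remainder of your argument uses only $cs \in \Gamma$, so once that is correctly established the extra precision is unnecessary and the rest of your proof goes through as written.
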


\begin{proof}
The group $\Gamma_1$ is trivial. Therefore $X(\ka)$ contains the points $e_1$, $e_2$ and $e_3$.

By Lemmas \ref{P1case} and \ref{Scase} the group $\Gamma$ contains a subgroup $\langle cs \rangle \cong \CG_2$. Therefore $X$ is not $\ka$-rational by Lemma \ref{Galnonrat2}.

The image of the group $G$ in the Weyl group $W(E_6)$ is $\langle ab \rangle$ thus $X$ is $G$-minimal by Corollary \ref{Galmin2}.

The quotient $X / G$ is not $\ka$-rational by Lemma \ref{DP3nonrat}.

\end{proof}

\begin{example}
\label{exnratnrat}
If the field $\ka$ contains $\omega$ and an element $\lambda$ such that $\sqrt{\lambda} \notin \ka$ then the cubic surface given by the equation
$$
ux(x^2- \lambda y^2) +z^3 + t^3 = 0
$$
\noindent satisfies the conditions of Lemma \ref{X3nratnrat}.
\end{example}

\begin{remark}
Note that the conditions of Examples \ref{exratrat}, \ref{exratnrat}, \ref{exnratrat}, \ref{exnratnrat} hold for $\ka = \Q(\omega)$.
\end{remark}

\bibliographystyle{alpha}
\bibliography{my_ref}
\end{document}